\newtheorem*{theorem*}{Theorem}
\newtheorem{observation}{Observation}
\newtheorem{proposition}{Proposition}
\newtheorem{conjecture}{Conjecture}
\newtheorem{corollary}{Corollary}
\newtheorem{lemma}{Lemma}
\theoremstyle{remark}
\newtheorem{remark}{Remark}
\theoremstyle{definition}
\newtheorem{definition}{Definition}
\newcommand{\T}{\mathcal{T}}
\newcommand{\Q}{\mathcal{Q}}
\newcommand{\C}{\mathcal{C}}
\newcommand{\B}{\mathcal{B}}
\renewcommand{\H}{\mathcal{H}}
\newcommand{\D}{\mathcal{D}}
\renewcommand{\P}{\mathcal{P}}
\newcommand{\R}{\mathcal{R}}
\newcommand{\torp}[2]{\texorpdfstring{#1}{#2}}
\title[Parabola-Inscribed Poncelet Polygons]{Parabola-Inscribed Poncelet Polygons\\Derived from the Bicentric Family}
\author{Filipe Bellio}
\thanks{F. Bellio, ENS-Lyon, France. \texttt{filipe.bellio-da-nobrega@ens-lyon.fr}}
\author{Ronaldo Garcia}
\thanks{R. Garcia, Inst. Mat. e Estatística, Univ. Fed. de Goiás, Goiânia, Brazil. \texttt{ragarcia@ufg.br}}
\author[D. Reznik]{Dan Reznik}
\thanks{D. Reznik$^*$, Data Science Consulting, Rio de Janeiro, Brazil. \texttt{dreznik@gmail.com}}
\date{}
\begin{document}

\maketitle

\vspace{-0.75cm}
\begin{abstract}
We study loci and properties of a Parabola-inscribed family of Poncelet polygons whose caustic is a focus-centered circle. This family is the polar image of a special case of the bicentric family with respect to its circumcircle. We describe closure conditions, curious loci, and new conserved quantities. 
\end{abstract}



\section{Introduction}
This is a continuation of our investigation of Euclidean phenomena of Poncelet families \cite{garcia2020-family-ties,garcia2021-impa,garcia2020-new-properties,reznik2020-ballet}. Recall Poncelet's porism: specially-chosen pairs of conics $\C,\C'$ admit a one-parameter family of polygons inscribed in $\C$ while simultaneously circumscribed about $\C'$ \cite{bos-1987,centina2016a,dragovic11}.

Here we consider a certain family such that $\C$ is a parabola $\P$ while $\C'$ is a circle centered on the focus of $\P$. As shown in \cref{fig:bic-tang-n3}, this is simply the polar image of the {\em bicentric family} (interscribed between two circles) with respect to its circumcircle, see \cref{app:vtx,app:bic} for construction details. We derive closure conditions for this new family for $N=3,4,5,6$ cases ($N$ is the number of sides) and describe some of its properties and loci of associated points. Also considered is its polar image with respect to $\P$.

\begin{figure}
    \centering
    \includegraphics[trim=180 0 0 80,clip,width=\textwidth,frame]{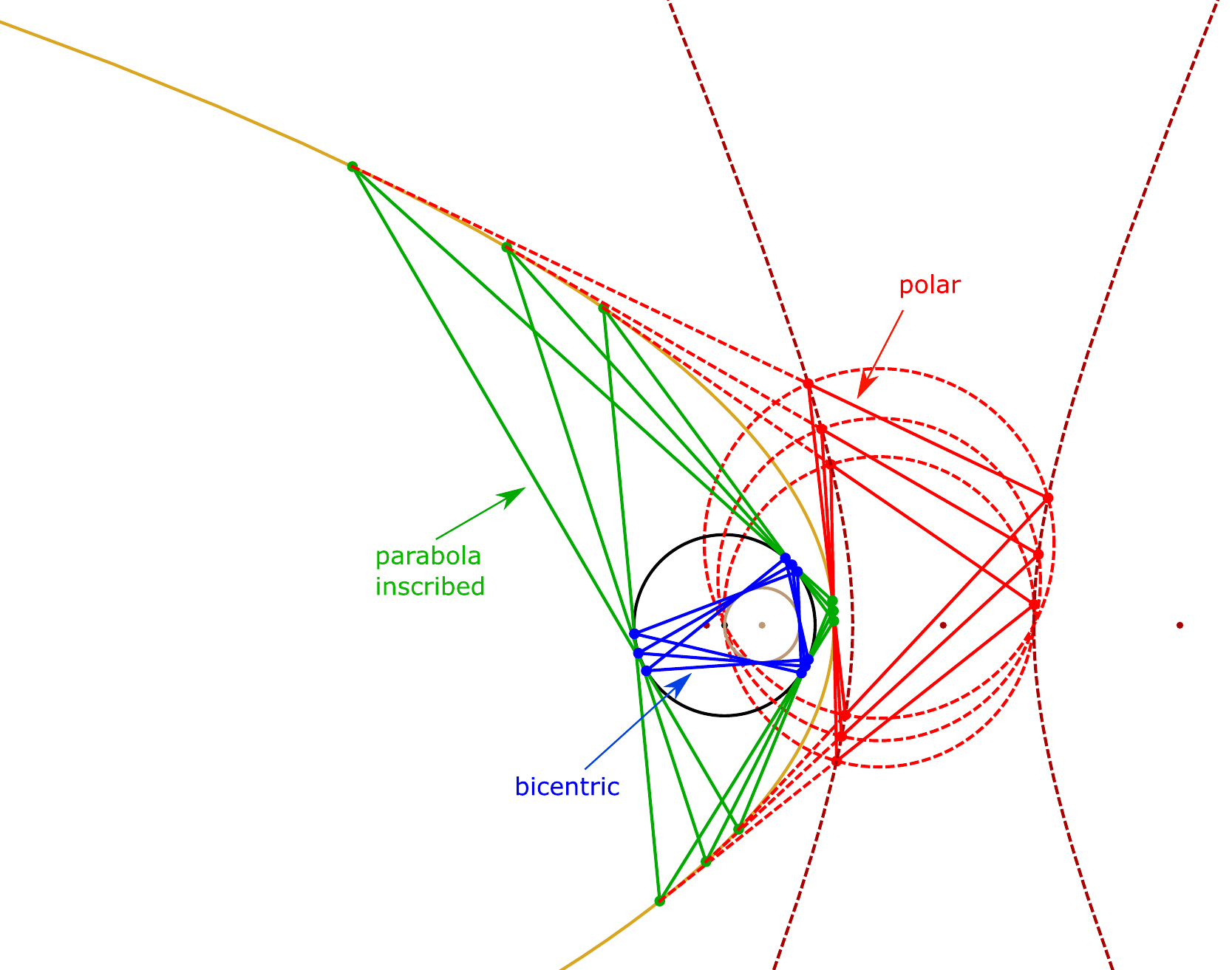}
    \caption{Several configurations of the parabola-inscribed Poncelet family (green), obtainable as the polar image of the bicentric family (blue) with respect to the outer circle (black), provided the bicentric incircle passes through the circumcenter, see \cref{app:bic}. Also shown is the polar family (red) of the parabola-inscribed one with respect to the parabola itself. This family is inscribed in a hyperbola (dark dashed red). So you can think of this trio (blue, green, red) as successive polar images with respect to the outer conic of each preceding family.}
    \label{fig:bic-tang-n3}
\end{figure}

\subsection*{Main results} 

\begin{itemize}
   \item The loci of vertex, perimeter, and area centroids are parabolas. Recall that in general, the locus of the perimeter centroid is not a conic \cite{sergei2016-com}.
   \item The loci of vertex and area centroids of polar polygons are straight lines, whereas that of the perimeter centroid is a non-conic.
   \item In the $N=3$ case, the locus of the orthocenter is a straight line as are those of many triangle centers of the polar family. The Euler line of the polar family always passes through the parabola's focus.
   \item Several centers of the $N=3$ polar family are stationary and/or sweep circles. In the latter case, they all belong to a single parabolic pencil.
   \item We prove that the quantity $\sum{\sin\theta_i/2}$ is conserved, where $\theta_i$ are the interior angles of parabola-inscribed polygons. In fact, this quantity is conserved by any conic-inscribed polar image of the bicentric family.
\end{itemize}

Most of the above properties were first noticed via simulation \cite{mathematica_v10}, and later proved with a computer-algebra system (CAS) \cite{maple2019}, using the explicit parametrizations given in \cref{app:vtx}. For brevity, we omit any CAS-based proofs.

\subsection*{Related work}

We can roughly divide it into three groups: (i) the study of point loci over certain triangle families \cite{odehnal2011-poristic,pamfilos2004,zaslavksy2003}, (ii) proving that loci of certain Poncelet triangle families are of a given curve type \cite{corentin2021-circum,garcia2020-ellipses,olga14,skutin2013-isogonal}, and (iii) proving properties and invariants over $N\geq 3$ Poncelet families \cite{akopyan2020-invariants,bialy2020-invariants,caliz2020-area-product,sergei2016-com}. Also related is the Steiner-Soddy Poncelet family which are the polar image of the so-called Brocard porism with respect to the circumcircle \cite{garcia2022-steiner-soddy}.

\subsection*{Article organization}

In \cref{sec:n3,sec:n3-polar} we examine parabola-inscribed Poncelet triangles (as well as its polar polygons with respect to the parabola), deriving closure conditions and expressions for many of its their triangle center loci. In \cref{sec:n4,sec:n5,sec:n6} we derive geometric closure conditions for  $N=4,5,6$ families, respectively, detecting the abovementioned pattern for the loci of their centroids (as well as in the polar family), followed by conjectured generalizations in \cref{sec:generalize}. In \cref{sec:conserved} we describe a new quantity conserved by the parabola-inscribed family (and variations thereof). 

In \cref{app:vtx} we provide explicit parametrizations for the vertices of both the $N=3$ and $N=4$ families, as well as their respective polar families. In \cref{app:bic} we explore the relation of parabola-inscribed families with the traditional bicentric family.
\section{Loci of parabola-inscribed triangles}
\label{sec:n3}
Referring to \cref{fig:n=3}, consider a Poncelet family $T$ of triangles inscribed in a parabola $\P$, and circumscribed about a focus-centered circle. Let $F=[-f,0]$ and $V=[0,0]$ denote focus and vertex, respectively, where $f$ is the focal distance. Consider a circle $\C$ centered at $F$ with radius $r$. 

\begin{figure}
    \centering
    \includegraphics[width=.8\textwidth,frame]{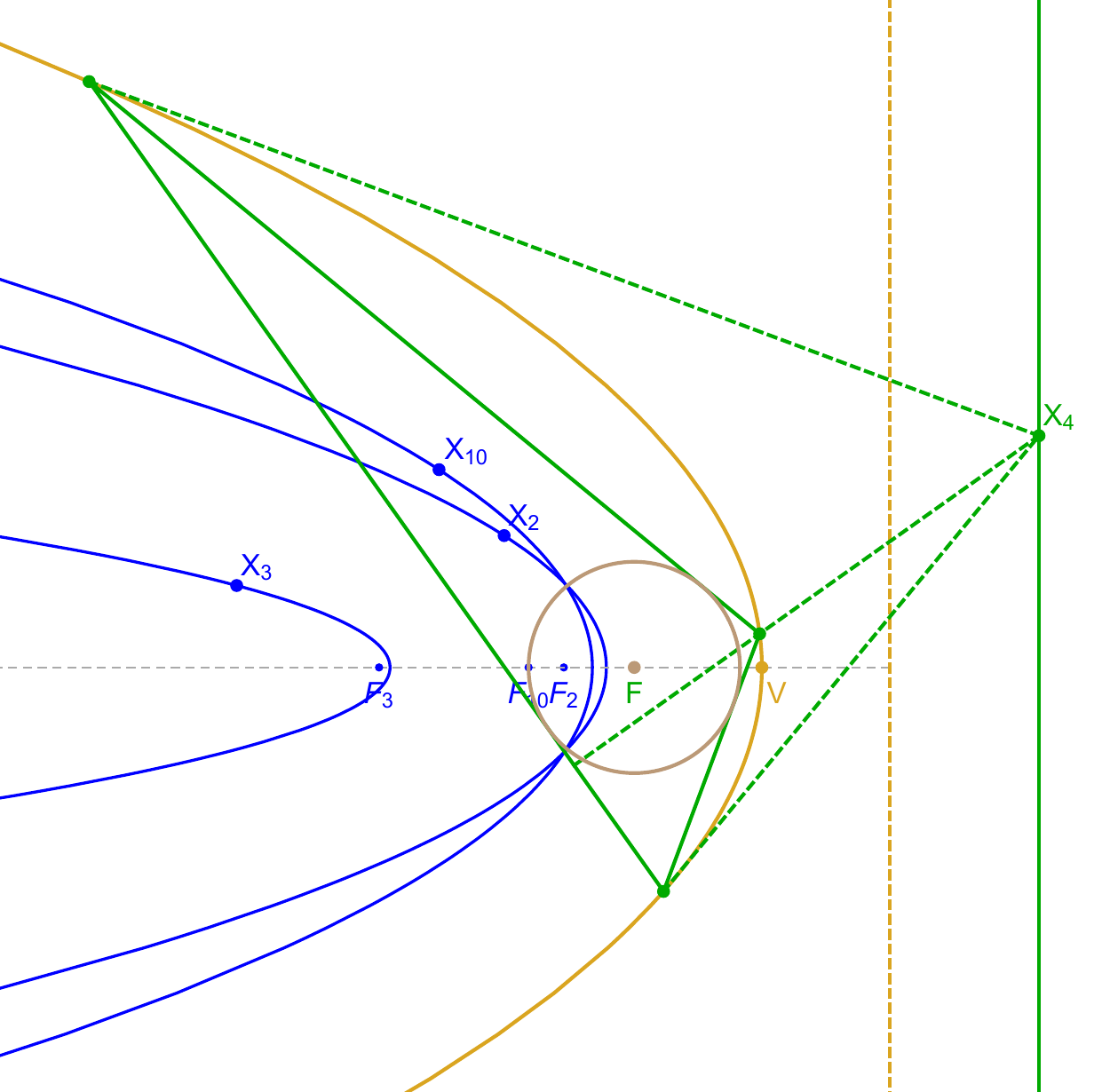}
    \caption{
    A Poncelet triangle (green) is shown inscribed in a parabola $\P$ (gold), circumscribed about a focus-centered circle (brown). Over the family, $X_4$ sweeps a line (solid green) parallel to the directrix (dashed gold). The loci of barycenter $X_2$, circumcenter $X_3$, and Spieker center $X_{10}$ are coaxial parabolas (blue) with foci on $F_2$, $F_3$, and $F_{10}$, respectively. Notice the latter coincides with the left extreme of the circular caustic.}
    \label{fig:n=3}
\end{figure}

\begin{proposition}
$\P$ and $\C$ will admit a Poncelet family of triangles iff $r/f=2(\sqrt{2}-1)$.
\label{prop:n=3}
\end{proposition}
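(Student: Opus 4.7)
The plan is to invoke Poncelet's closure theorem, which reduces the porism to exhibiting a single triangle inscribed in $\P$ and circumscribing $\C$. So I aim to show such a triangle exists precisely when $r/f = 2(\sqrt{2}-1)$.

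Since both $\P$ and $\C$ are symmetric about the $x$-axis, the natural candidate is a triangle sharing this symmetry. Such a triangle has one vertex on the axis, which must be $V = (0,0)$ (the only axis point of $\P$), and two further vertices $(x_1, \pm y_1)$ satisfying $y_1^2 = -4fx_1$. First I would impose tangency of the vertical side $x = x_1$: its distance to $F = (-f, 0)$ is $|x_1 + f|$, so setting this equal to $r$ and choosing the branch in which the side lies to the left of $F$ (so that $\C$ is enclosed) gives $x_1 = -(f+r)$, whence $y_1^2 = 4f(f+r)$.

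Next I would impose tangency of one of the oblique sides. The line through $V$ and $(x_1, y_1)$ has equation $y_1 x - x_1 y = 0$, and its distance to $F$ equals $f|y_1|/\sqrt{x_1^2 + y_1^2}$. Setting this equal to $r$ yields $f^2 y_1^2 = r^2(x_1^2 + y_1^2)$. Substituting for $x_1$ and $y_1^2$ and putting $t = r/f$, this collapses to the cubic $t^3 + 5t^2 - 4 = 0$.

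The remaining step is to check that $t_0 = 2(\sqrt{2}-1)$ is a root (by direct substitution) and to factor $t^3 + 5t^2 - 4 = (t - t_0)\bigl(t^2 + (3+2\sqrt{2})\,t + 2(\sqrt{2}+1)\bigr)$; the quadratic factor has positive coefficients, hence negative roots, so $t_0$ is the unique positive solution. There is no substantive obstacle here: the only care required is selecting the correct sign for $x_1 + f$ to ensure the circle is enclosed, and observing that the other roots of the cubic yield geometrically inadmissible configurations.
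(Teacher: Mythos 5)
Your proof is correct, and at the top level it is the same strategy as the paper's: invoke Poncelet's closure theorem to reduce the porism to exhibiting one interscribed triangle, and pick an axially symmetric candidate so that only two tangency conditions need to be imposed. The difference lies in which symmetric triangle is chosen. The paper uses the degenerate member whose third vertex is the point at infinity of the axis: its two sides parallel to the axis are automatically the tangents at distance $r$ from the focus, and the remaining chord gives the single quadratic condition $f-r=r^2/(4f)$ (in the normalization $y=x^2/(4f)$), from which $r/f=2(\sqrt{2}-1)$ drops out at once. You instead take the genuine affine triangle with apex at the vertex $V$ of $\P$, which leads to the cubic $t^3+5t^2-4=0$; I verified that $t_0=2(\sqrt{2}-1)$ satisfies it, that your factorization $t^3+5t^2-4=(t-t_0)\bigl(t^2+(3+2\sqrt{2})t+2(\sqrt{2}+1)\bigr)$ is exact, and that your tangency equations (in particular the sign choice $x_1=-(f+r)$ needed for the chord to enclose $\C$) are right. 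Your route costs more algebra and requires discarding the negative roots, but it avoids having to reason about a ``triangle'' with a vertex on the line at infinity, a point the paper's construction quietly relies on. In either version it is worth stating explicitly that Poncelet's porism supplies both directions of the ``iff'': one closed triangle yields the whole family, and conversely an existing family forces the chosen symmetric orbit to close, which pins down $r/f$.
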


\begin{proof}
Consider the Poncelet triangle with two parallel sides shown in \cref{fig:n=3-proof}, inscribed in the parabola $y=x^2/(4f)$, where $f$ is the focal length. At $x=r$ the parabola must be at $y=f-r$, i.e., $f-r=r^2/(4f)$, and the result follows. 
\end{proof}

\begin{figure}
    \centering
    \includegraphics[width=.5\textwidth,frame]{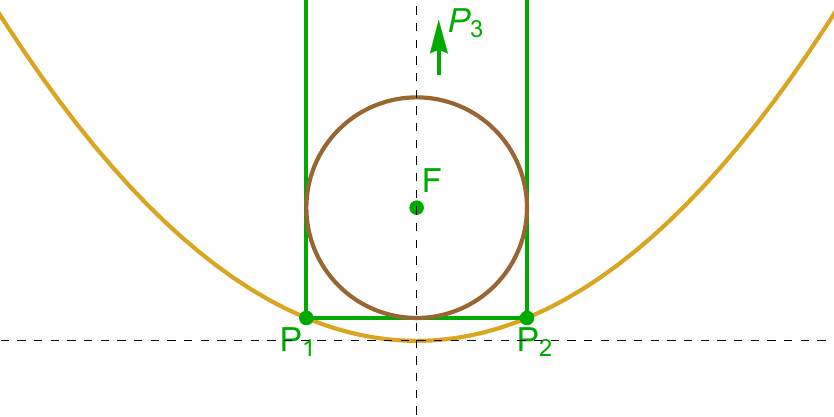}
    \caption{Construction used to derive $r/f$ in \cref{prop:n=3}. Side $P_1 P_2$ of the Poncelet triangle is perpendicular to the axis while the other two sides are parallel to it, i.e., vertex $P_3$ lies on the line at infinity.}
    \label{fig:n=3-proof}
\end{figure}

\subsection{Straight-line orthocenter locus} let $\T$ denote our parabola-inscribed triangle family and $\D$ the directrix of $\P$. Henceforth we shall adopt Kimberling's notation $X_k$ to refer to triangle centers \cite{etc}.

\begin{proposition}
Over $\T$, the locus of the orthocenter $X_4$ is the line parallel to $\D$ given by $x = (5-2\sqrt{2})f$, with $y\in\mathbb{R}\backslash\{\pm 2f/(\sqrt{2}+1)\}$.
\label{prop:x4}
\end{proposition}

The proof below was kindly contributed by Alexey Zaslavsky \cite{zaslasvsky2021-private}:

\begin{proof}
Let $\C$ be the unit circle in the complex plane and $A$, $B$, $C$ the touching points with the sides of the parabola-inscribed triangles. The polar transformation with center $F$ maps the parabola to a circle with center $I$ passing through $F$ and touching $AB$, $BC$, $CA$. Using Euler's formula $|FI|^2=r^2=R(R-2r)$ \cite{mw}, with $R=1$ its radius, and $r=\sqrt{2}-1$. Consider the line $FI$ as the real axis. Since $I$ is self-conjugated with respect to $ABC$, we have $a+b+c=2\sqrt{2}-2+(3-2\sqrt{2})abc$, $ab+bc+ca=3-\sqrt{2}+(2\sqrt{2}-2)abc$. The polar images of the altitudes of the original triangle are the common points of $BC$, $CA$, $AB$ with the lines passing through $F$, and perpendicular to $FA$, $FB$, $FC$ respectively. We have to calculate the common point of the line passing through these three points and the real axis. The coordinate functions of this point are symmetric functions in $a,b,c$, so we can express them as elementary symmetric functions on said variables, and verify that they are constant.  
\end{proof}

In \cref{app:bic} we describe how the parabola-inscribed family is the polar image of the bicentric family with respect to its circumcircle. Referring to \cref{fig:x4-loci}, \cref{prop:x4} is actually a special case of:

\begin{figure}
    \centering
    \includegraphics[width=\textwidth]{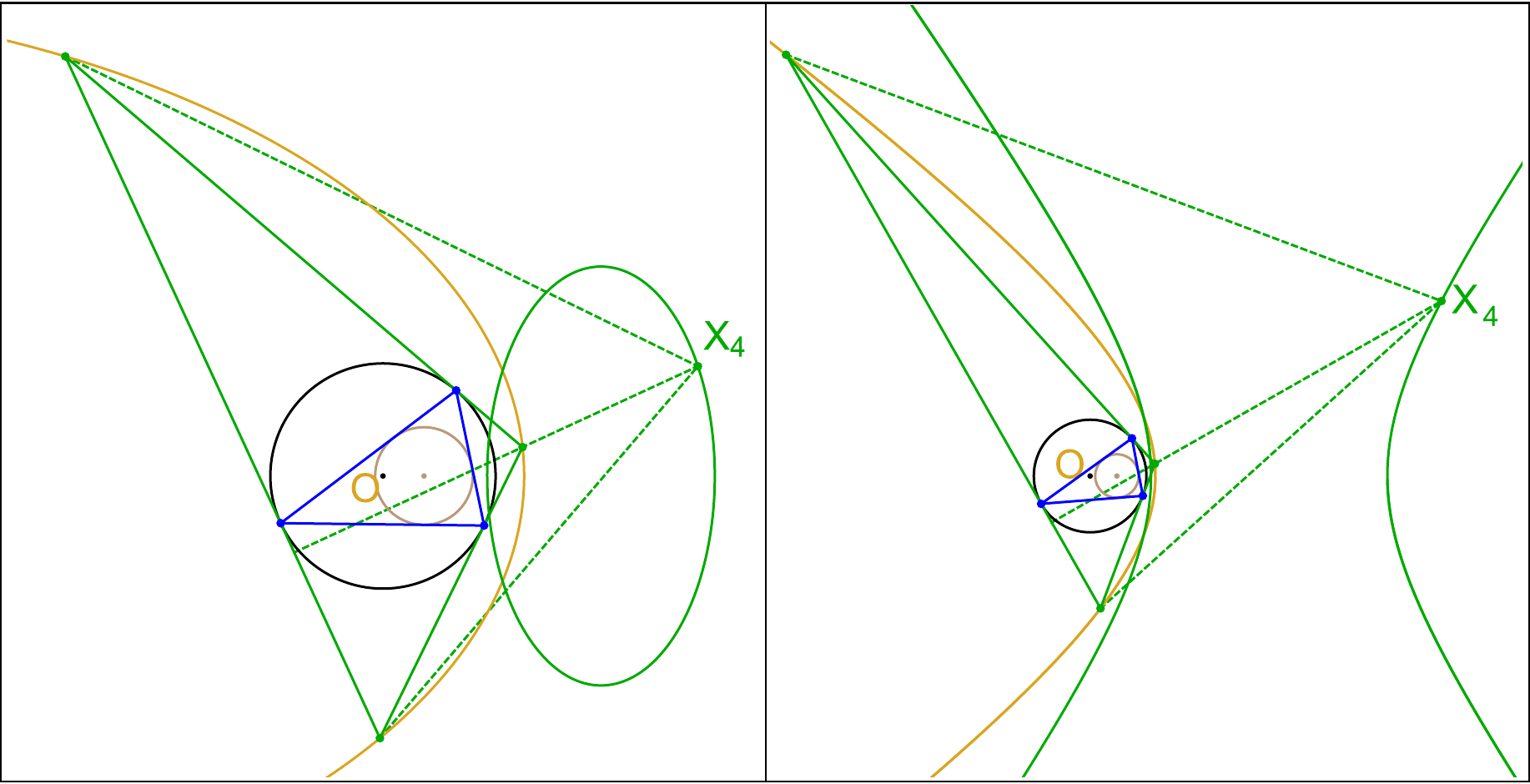}
    \caption{Consider perturbing to the bicentric family (blue) such that the circumcenter $O$ is interior (resp. exterior) to the incircle, as shown on the left (resp. right). The tangential family (green) becomes ellipse- (resp. hyperbola-) inscribed (gold curve). In the former (resp. latter) case, the locus of the orthocenter $X_4$ is an ellipse (resp. hyperbola).}
    \label{fig:x4-loci}
\end{figure}

\begin{proposition}
The locus of $X_4$ of an family which is the polar image of $N=3$ bicentrics with respect to its outer circle is an ellipse, straight line, or hyperbola if the circumcenter of the bicentric triangle lies in the interior, on top, or outside its incircle.
\end{proposition}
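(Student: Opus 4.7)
The plan is to extend Zaslavsky's argument to arbitrary bicentric moduli, parametrized through the Euler relation $d^{2}=R(R-2r)$. Normalize $R=1$ and work in the complex plane with $\C$ (the bicentric circumcircle) as the unit circle at the origin and the incircle centered at $I=d\in\mathbb{R}$ with radius $r=(1-d^{2})/2$; let $a,b,c$ be the unit-modulus complex coordinates of the bicentric vertices. The identification of $X_4$ used in the proof of \cref{prop:x4} carries over unchanged: it is the pole, with respect to $\C$, of the line $\ell_H$ through $P_A=BC\cap\ell_A^{\perp}$, $P_B=CA\cap\ell_B^{\perp}$, $P_C=AB\cap\ell_C^{\perp}$, where $\ell_X^{\perp}$ is the perpendicular at the origin to $OX$. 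A direct complex-coordinate calculation still gives $P_A=a^{2}(b+c)/(a^{2}-bc)$ and its cyclic companions.

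Because $X_4$ is symmetric in $a,b,c$, it reduces to a rational function of the elementary symmetric polynomials $e_1,e_2,e_3$. The side-tangency conditions $(d(1+bc)-(b+c))^{2}=4r^{2}bc$ (and cyclic) furnish the $N=3$ Poncelet closure as the affine relations $e_1=2d+d^{2}e_3$ and $e_2=d^{2}+2de_3$, which specialize to Zaslavsky's identities at $d=r=\sqrt{2}-1$. Substituting yields $X_4=F(e_3)$, a rational function of a single variable moving on the unit circle $|e_3|=1$.

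To classify the image locus $\Lambda$, I would pass through polar duality: the family $\{\ell_H\}$ envelops a curve $\Gamma$ in the bicentric plane, and $\Lambda$ is exactly the polar image of $\Gamma$ with respect to $\C$. By the classical fact that the polar image of a conic with respect to a circle centered at $O$ is an ellipse, parabola, or hyperbola according as $O$ is inside, on, or outside the conic, the classification of $\Lambda$ reduces to locating $O$ with respect to $\Gamma$. In the borderline case $d=r$ of \cref{prop:x4}, Zaslavsky's computation shows the $\ell_H$ to be concurrent, so $\Gamma$ degenerates to a single point whose polar is a straight line.

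The principal obstacle is to identify $\Gamma$ explicitly and verify the transition of $O$'s incidence with it at $d=r$: one expects $\Gamma$ to enclose $O$ for $d<r$ (giving an ellipse for $\Lambda$), to collapse to a single point for $d=r$ (the line case), and to exclude $O$ for $d>r$ (hyperbola). As a fallback --- in the spirit of the CAS-based proofs elsewhere in the paper --- one may bypass the envelope description and classify the image of $F$ on $|e_3|=1$ directly from its expansion, certifying the trichotomy by the boundedness and pole structure of $F$.
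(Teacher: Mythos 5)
The paper offers no proof of this proposition (it is one of the omitted CAS verifications), so you are not overlooking a slicker published argument; judged on its own terms, though, your proposal is sound up to — but not through — the decisive step. The setup generalizes Zaslavsky's argument faithfully: $P_A=a^2(b+c)/(a^2-bc)$ is correct for any position of the incircle, and your closure relations $e_1=2d+d^2e_3$, $e_2=d^2+2de_3$ are right; indeed the second follows from the first via $e_2=e_3\bar e_1$ on $|a|=|b|=|c|=1$, they cut out a one-parameter family as Poncelet requires, and they specialize correctly at $d=\sqrt2-1$ (your formulas actually reveal that the constant $3-\sqrt2$ in the paper's $e_2$ identity should read $3-2\sqrt2$). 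So reducing $X_4$ to a rational function $F(e_3)$ on $|e_3|=1$ is legitimate.

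The gap is the classification of the image, which is the entire content of the proposition. The classical fact you invoke — that the polar image of a conic with respect to a circle centered at $O$ is an ellipse, parabola, or hyperbola according to the position of $O$ — applies only once you know the envelope $\Gamma$ of the lines $\ell_H$ is a conic; but ``$\Gamma$ is a conic'' is projectively dual to, hence logically equivalent to, ``the locus $\Lambda$ of $X_4$ is a conic,'' which is exactly what must be proved. The duality step therefore restates the problem rather than reducing it. Worse, the trichotomy being claimed is not the generic inside/on/outside trichotomy of polar duality: the middle case is a straight line, not a parabola, which forces $\Gamma$ to collapse to a single point precisely at $d=r$ — a degeneration you verify only by citing Zaslavsky at that one parameter value. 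What still has to be done is what you yourself label the ``principal obstacle'': either show that, after the substitutions, $F$ restricted to $|e_3|=1$ is of a form whose image is visibly a (possibly degenerate) conic — bounded, hence an ellipse, when no $\ell_H$ passes through $O$ (equivalently no bicentric side passes through $O$, i.e.\ $d<r$), and with poles on $|e_3|=1$, hence unbounded branches, when $d>r$ — or fall back on the CAS computation. As written, the type classification is asserted (``one expects$\ldots$'') rather than proved.
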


\subsection{Three parabolic loci}

Referring to \cref{fig:n=3}, we show below that over $\T$, the loci of the barycenter, circumcenter, and Spieker centers are all parabolas. The first and last correspond to the vertex and perimeter centroids of a triangle. This is curious since, in general, the locus of the perimeter centroid of a Poncelet family is not a conic \cite{schwartz2016-com}.

\begin{proposition}
Over $\T$, the locus of the barycenter $X_2$ is a parabola coaxial with $\P$, with focus $F_2=[-f/3,0]$, and vertex
$V_2=[2f(1-2\sqrt{2})/3,0]$.
\end{proposition}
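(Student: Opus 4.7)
My plan is to verify the centroid traces a parabola by direct symbolic computation using the explicit vertex parametrization from \cref{app:vtx}. With the parabola written in coordinates where $V=(0,0)$, $F=(-f,0)$ and circular caustic $\C$ of radius $r=2(\sqrt2-1)f$ centered at $F$ (\cref{prop:n=3}), I would parametrize the touching points of the sides with $\C$ by an angle $\theta$, write the three tangent lines, intersect them pairwise with $\P$ to obtain the vertices $P_i(\theta)$, and compute $X_2(\theta)=\tfrac13\bigl(P_1(\theta)+P_2(\theta)+P_3(\theta)\bigr)$. Eliminating $\theta$ from the parametric equations $(x_2(\theta),y_2(\theta))$ via a resultant should produce an algebraic relation which, after simplification, matches a parabola of the form $y^2 = 4p\bigl(x-(V_2)_x\bigr)$ for the claimed focal parameter $p=|F_2-V_2|$, with focus and vertex read off directly.

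A more structural route is to combine \cref{prop:x4} with the Euler line identity $X_2=\tfrac13 X_4+\tfrac23 X_3$. Since \cref{prop:x4} already pins $X_4$ to the vertical line $x=(5-2\sqrt2)f$, the locus of $X_2$ is an affine image (a $\tfrac23$-scaling away from this vertical line followed by a translation) of the locus of $X_3$. The problem then reduces to showing that the circumcenter $X_3$ traces a parabola coaxial with $\P$: the affine map automatically produces a coaxial parabola with the same axis for $X_2$, and the explicit $F_2, V_2$ follow by tracking how focus and vertex transform under this map. The locus of $X_3$ itself can be handled by writing $X_3$ as the intersection of two perpendicular bisectors of sides $P_iP_j$, expressed in the elementary symmetric functions $\sigma_1,\sigma_2,\sigma_3$ of the vertex ordinates; the three Poncelet tangency conditions eliminate two of these three symmetric functions, leaving a one-parameter family along which $X_3$ is rational, and a final simplification exposes the parabolic equation.

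The main obstacle is unmistakably algebraic rather than conceptual: all routes above culminate in an elimination step whose output must be verified to factor as a conic and then as a parabola with the specific focus $-f/3$ and vertex $2f(1-2\sqrt2)/3$. There is no evident geometric shortcut — polar duality maps the family to the bicentric case, whose centroid locus is a circle, but the polar transform of a circle is typically a conic of a different type, so conic-ness of the centroid locus does not transfer in any immediate way. Consequently, as the authors declare in the introduction, the cleanest realization of this plan is a CAS verification starting from the parametrizations in \cref{app:vtx}, with the Euler-line reduction above serving chiefly to reduce the size of the intermediate expressions.
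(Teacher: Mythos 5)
Your primary route---direct symbolic elimination starting from the explicit parametrization in \cref{app:vtx}---is exactly what the paper does: the authors state that these propositions were established by CAS and omit the computations, so there is no written proof to compare against, and that part of your plan is sound.

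However, the ``more structural route'' via the Euler line contains a genuine error. The identity $X_2=\tfrac13 X_4+\tfrac23 X_3$ is a homothety of ratio $2/3$ centered at $X_4$, and \cref{prop:x4} fixes only the \emph{abscissa} of $X_4$: its ordinate varies over the family (the displayed formula for $X_4$ has a manifestly nonconstant second coordinate). Since the center of the homothety moves, the locus of $X_2$ is not the image of the locus of $X_3$ under any single affine map. Concretely, $x_2=\tfrac23 x_3+\tfrac13(5-2\sqrt2)f$ holds for the abscissae, but the ordinates satisfy $y_2=\tfrac23 y_3+\tfrac13 y_4$ with $y_4$ varying, so knowing that $x_3$ is a quadratic function of $y_3$ does not make $x_2$ a quadratic function of $y_2$; the ``coaxial parabola maps to coaxial parabola'' step fails. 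If you want a structural shortcut that actually shrinks the computation, use instead that all three vertices lie on $\P$: with $\sigma_1,\sigma_2$ the elementary symmetric functions of the ordinates $y_i$, one has $y_2=\sigma_1/3$ and $x_2=-(\sigma_1^2-2\sigma_2)/(12f)$, so the proposition reduces to checking (from the parametrization in \cref{app:vtx}) that $\sigma_2$ is a quadratic polynomial in $\sigma_1$ with no linear term over the family; the focus $F_2$ and vertex $V_2$ then drop out of the two resulting coefficients.
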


\begin{proposition}
Over $\T$, the locus of the circumcenter $X_3$ of $T$ is a parabola coaxial with $\P$, with focus $F_3=[-f(2\sqrt{2} - 3)/2,0 ]$, and vertex 
$V_3=[ -f(2\sqrt{2}+3)/2,0]$.
\end{proposition}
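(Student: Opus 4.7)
The plan is to verify the claim by direct parametric computation, exactly paralleling how one would handle the barycenter case. First I would parametrize each vertex as $P_i = (-y_i^2/(4f), y_i)$ on $\P$, then impose Poncelet closure (each side $P_iP_{i+1}$ tangent to the caustic circle of radius $r = 2(\sqrt{2}-1)f$ centered at $F$), which lets me express $y_2$ and $y_3$ as rational functions of a single free parameter $y_1$, matching the explicit formulas from \cref{app:vtx}. Substituting these into the standard circumcenter formula for the three points then gives $X_3 = (X_3^x(y_1), X_3^y(y_1))$ as explicit rational functions of $y_1$.

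The heart of the argument is then to establish the algebraic identity
\[ \bigl(X_3^y(y_1)\bigr)^2 \;=\; 12f\,\Bigl(X_3^x(y_1) + \tfrac{f(2\sqrt{2}+3)}{2}\Bigr), \]
which must hold identically in $y_1$. Once this is in hand, it exhibits the locus as a parabola with axis $y=0$ (hence coaxial with $\P$), vertex $V_3 = (-f(2\sqrt{2}+3)/2, 0)$, and focal parameter $4p=12f$; the focus therefore sits at $V_3 + (3f,0) = (-f(2\sqrt{2}-3)/2, 0)$, matching the stated $F_3$. Note that axis-symmetry of the locus is forced a priori by the $y \mapsto -y$ symmetry of the family, so the only conic normal forms that can arise are $y^2 = A(x-x_0)$ (or degenerate linear specializations thereof), which rules out the competing elliptic and hyperbolic shapes.

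The main obstacle is the sheer size of the rational functions obtained after substituting $y_2(y_1), y_3(y_1)$ into the circumcenter formula: the identity above collapses to a polynomial identity in $y_1$ with coefficients in $\mathbb{Q}[\sqrt{2},f]$, and discharging it by hand is tedious though not deep. This is precisely the kind of CAS-assisted verification the authors reserve as omitted. A lighter, more geometric alternative would be to exploit the axis-symmetry to reduce to the normal form $y^2 = A(x-x_0)$ with only two unknowns, then pin down $A$ and $x_0$ by evaluating $X_3$ at two convenient symmetric members of the family (for instance the isoceles member symmetric about the parabola's axis, plus the limiting triangle from the proof of \cref{prop:n=3}), and confirm the full identity by a single generic check.
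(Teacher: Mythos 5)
Your proposal is correct and is essentially the paper's own (omitted) argument: the authors state that these loci were verified with a CAS by substituting the explicit vertex parametrization of \cref{app:vtx} into the center formula and checking the resulting conic identity, which is exactly your plan. Your bookkeeping is also consistent — the vertex-to-focus distance is $3f$, so the identity $(X_3^y)^2 = 12f\left(X_3^x + \tfrac{f(2\sqrt{2}+3)}{2}\right)$ does encode the stated $V_3$ and $F_3$, and the axis-symmetry reduction plus evaluation at the axis-symmetric member (which lands $X_3$ exactly at $V_3$) is a legitimate shortcut.
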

 
\begin{proposition}
Over $\T$, the locus of the Spieker center $X_{10}$ is a parabola coaxial with $\P$, with focus $F_{10}=[(1-2\sqrt{2})f,0]$ and vertex  $V_{10}=[f(3/2-2\sqrt{2}),0]$. In particular, $F_{10}=[-f-r,0]$, i.e., it lies on the left extreme of $\C$.
\end{proposition}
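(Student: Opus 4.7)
My plan is to verify the locus equation directly using the explicit vertex parametrization $(A(t), B(t), C(t))$ of $\T$ from \cref{app:vtx}. First I would compute the Euclidean side lengths $a(t) = |BC|$, $b(t) = |CA|$, $c(t) = |AB|$ and substitute into the standard Spieker-center formula
\[
X_{10} = \frac{(b+c)A + (c+a)B + (a+b)C}{2(a+b+c)}.
\]
A priori this yields a transcendental function of $t$, since the side lengths are square roots of rational expressions. The crucial algebraic step is to show that these surds cancel in $X_{10}$, so that $X_{10}(t) = (x(t), y(t))$ becomes rational in $t$; this cancellation depends on the closure ratio $r = 2(\sqrt{2}-1)f$ from \cref{prop:n=3}.

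Next I would eliminate $t$ via a resultant computation to obtain an implicit equation for the locus. Aiming for the target
\[
y^2 \;=\; -2f\bigl(x - (\tfrac{3}{2} - 2\sqrt{2})f\bigr),
\]
this exhibits a parabola coaxial with $\P$ with vertex $V_{10} = ((3/2 - 2\sqrt{2})f, 0)$ and focal parameter $f/2$, hence focus $F_{10} = ((1 - 2\sqrt{2})f, 0)$. The coincidence with the left extreme of $\C$ is then immediate: substituting $r = 2(\sqrt{2}-1)f$ gives $-f - r = (1 - 2\sqrt{2})f = F_{10,x}$.

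A conceptually cleaner alternative would exploit the classical complement identity $X_{10} = (3X_2 - X_1)/2$, noting that the incenter $X_1$ coincides with the fixed point $F$ whenever $\C$ acts as the actual incircle of a family triangle. Under that assumption, $X_2 \mapsto X_{10}$ is an affine similarity, and the previous $X_2$-parabola proposition delivers the locus without further calculation. One subtlety here is the need to handle configurations in which $\C$ plays the role of an excircle rather than an incircle, as the similarity would then act piecewise. Either way, the principal obstacle is the surd cancellation (or its analogue in the similarity argument), which depends critically on the exact closure ratio $r/f = 2(\sqrt{2}-1)$ and is cumbersome by hand — a natural target for CAS, in line with the paper's stated policy of omitting such CAS-based verifications for brevity.
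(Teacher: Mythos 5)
Your proposal is sound, and its first route is essentially the paper's own: the authors state explicitly that these centroid-locus propositions were verified by CAS from the vertex parametrizations of \cref{app:vtx}, which is exactly the resultant/elimination computation you describe. Your target equation $y^2=-2f\bigl(x-(\tfrac32-2\sqrt2)f\bigr)$ is the correct one (I checked it against the parametrized family at a generic configuration), and your reading of vertex, focal distance $f/2$, and leftward opening is consistent with the stated $V_{10}$ and $F_{10}=[-f-r,0]$.

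Your second route is genuinely different and, in fact, the better one: since $(b+c)A+(c+a)B+(a+b)C=(a+b+c)(A+B+C)-(aA+bB+cC)$, the surd cancellation you worry about is automatic, and $X_{10}=\tfrac12(3X_2-X_1)$ reduces everything to the homothety $h(F,\tfrac32)$ applied to the $X_2$-parabola once one knows $X_1\equiv F$ (which does hold throughout $\T$: the caustic is tangent to all three closed side segments, so it is the incircle, not an excircle, away from the single degenerate configuration with a vertex at infinity). One concrete warning, though, about ``delivers the locus without further calculation'': if you feed the paper's stated $F_2=[-f/3,0]$ into that homothety you get the origin, not $F_{10}=[(1-2\sqrt2)f,0]$. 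The $X_2$-locus opens toward $-x$ with vertex $V_2=[2(1-2\sqrt2)f/3,0]$ and focal \emph{distance} $f/3$, so its focus is actually at $[(1-4\sqrt2)f/3,0]$; the value $-f/3$ in the $X_2$ proposition appears to record the focal distance rather than the focus's abscissa. With the corrected focus, $h(F,\tfrac32)$ sends $V_2\mapsto V_{10}$, scales the focal distance $f/3\mapsto f/2$, and sends the focus to $(1-2\sqrt2)f$, exactly as claimed. So your complement argument closes the proof cleanly, but only after reconciling that discrepancy in the $X_2$ statement rather than quoting it verbatim.
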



\section{The polar \torp{$N=3$}{N=3} family}
\label{sec:n3-polar}
Referring to \cref{fig:polar}, let $T'$ denote the {\em polar} triangle of a triangle $T$ in $\T$, i.e., whose sidelines are the polars of $T$ with respect to $\P$. Since $T$ is inscribed in $\P$ these are simply the tangents.

\begin{figure}
    \centering
    \includegraphics[trim=200 100 50 150,clip,width=\textwidth,frame]{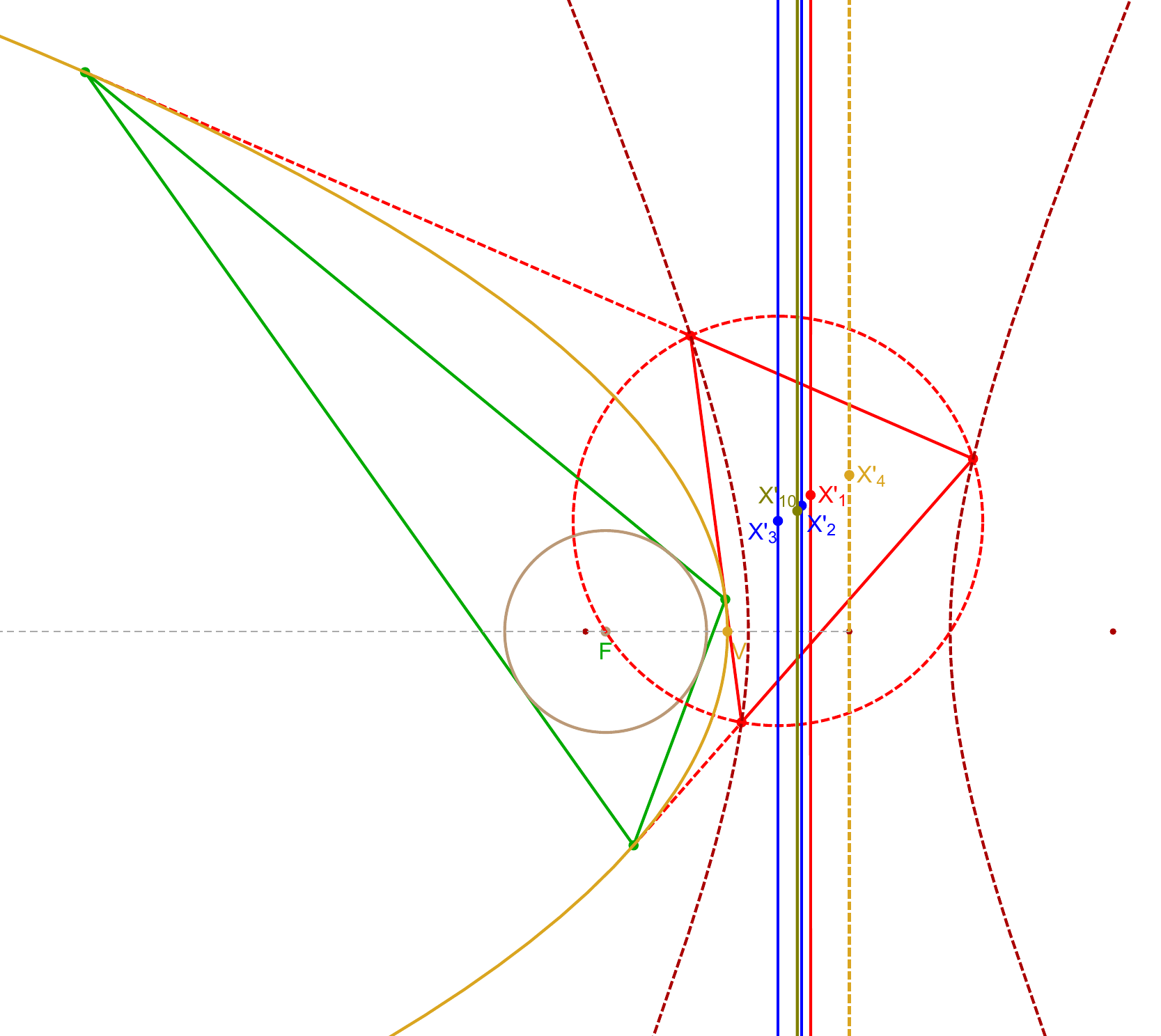}
    \caption{The polar triangle $T'$ (red) with respect to the parabola $\P$ (gold) to which our Poncelet family (green) is inscribed. It is Ponceletian as it is inscribed in a hyperbola (dashed dark red). Well-know properties include (i) the circumcircle (dashed red) passes through the focus $F$, and (ii) the orthocenter $X_4'$ lies (and therefore sweeps) the directrix of $\P$ \cite{akopyan2007-conics}. Also shown are the visually-straight, though quartic loci of the polar incenter $X_1'$ and Spieker center $X_{10}'$ (red and olive, respectively). The loci of $X_k'$, $k=2,3,4$ are straight lines parallel to the directrix (red, red, and dashed gold), the latter the directrix itself.}
    \label{fig:polar}
\end{figure}

Recall some known properties of the polar triangle with respect to any parabola \cite{akopyan2007-conics}: (i) the circumcircle of $T'$ passes through the focus $F$; (ii) the orthocenter of $T'$ is on the directrix; (iii) its area is half that of the reference triangle.

\begin{proposition}
The $T'$ family is Ponceletian. It is circumscribed about $\P$ and is inscribed in a hyperbola $\H$ with center $[f,0]$. Its axes are the axis and directrix of $\P$. Its implicit equation reads

\[ \H:\;\left(\sqrt{2} + \frac{3}{2}\right)\left(x - f\right)^2 - \frac{y^2}{2} - 2f^2=0. \]
\end{proposition}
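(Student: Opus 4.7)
The plan is to parametrize the vertices of $T$ on $\P$ by a scalar, express both (i) the side-tangency to $\C$ and (ii) the proposed membership in $\H$ of each vertex of $T'$ as polynomial relations in this parameter, and then observe that the two relations coincide identically. Poncelet's closure theorem then upgrades the pointwise statement into the claim that $(\H,\P)$ supports an entire Poncelet family.

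I would set coordinates so that $\P:y^{2}=-4fx$, consistent with focus $F=(-f,0)$, vertex $V=(0,0)$, and directrix $x=f$, and parametrize points of $\P$ by $P(t)=(-ft^{2},-2ft)$. A short computation shows that the tangent to $\P$ at $P(t)$ is $x-ty-ft^{2}=0$, so the tangents at $t_i$ and $t_j$ meet at
\[
Q(t_i,t_j)=\bigl(-ft_it_j,\;-f(t_i+t_j)\bigr).
\]
By definition the three vertices of $T'$ are $Q(t_1,t_2),Q(t_2,t_3),Q(t_3,t_1)$, where $t_1,t_2,t_3$ parametrize the vertices of $T$. The chord $P(t_i)P(t_j)$ has equation $2x-(t_i+t_j)y-2ft_it_j=0$, and its distance to $F$ equals $2f|1+t_it_j|/\sqrt{4+(t_i+t_j)^{2}}$. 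Equating this to $r=2(\sqrt{2}-1)f$ from \cref{prop:n=3} and squaring gives the tangency condition
\[
(1+t_it_j)^{2}=(3-2\sqrt{2})\bigl(4+(t_i+t_j)^{2}\bigr),
\]
which, using $(3-2\sqrt{2})^{-1}=3+2\sqrt{2}$, is equivalent to $(3+2\sqrt{2})(1+t_it_j)^{2}=4+(t_i+t_j)^{2}$.

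Substituting $(x,y)=Q(t_i,t_j)$ into the proposed equation for $\H$ and dividing through by $f^{2}$ yields
\[
\bigl(\tfrac{3}{2}+\sqrt{2}\bigr)(1+t_it_j)^{2}-\tfrac{1}{2}(t_i+t_j)^{2}-2=0,
\]
which is the rewritten tangency relation above after clearing the factor $\tfrac{1}{2}$. Hence every vertex of $T'$ lies on $\H$ whenever the two adjacent sides of $T$ are tangent to $\C$. Since this holds uniformly over $\T$, the family $T'$ has all sides tangent to $\P$ and all vertices on $\H$; by Poncelet's closure theorem, $(\H,\P)$ supports a one-parameter Poncelet family, which is precisely $T'$. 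The center $(f,0)$ and the principal axes, namely $y=0$ (the axis of $\P$) and $x=f$ (its directrix), are then read directly off the displayed form of the equation.

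There is no serious obstacle once one commits to the parametrization $(t_it_j,\,t_i+t_j)$: the tangency-to-$\C$ and vertex-on-$\H$ conditions become literally the same quadratic. The only quantitative input that must be gotten right is the coefficient $\tfrac{3}{2}+\sqrt{2}$ multiplying $(x-f)^{2}$ in the equation of $\H$, and this is rigidly forced by the closure ratio $r/f=2(\sqrt{2}-1)$ of \cref{prop:n=3}.
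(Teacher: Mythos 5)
Your proof is correct, and all the computations check out: the tangent at $P(t)=(-ft^2,-2ft)$ is indeed $x-ty-ft^2=0$, the pole of the chord is $(-ft_it_j,-f(t_i+t_j))$, and the tangency condition $(3+2\sqrt{2})(1+t_it_j)^2=4+(t_i+t_j)^2$ coincides exactly with the condition for that pole to lie on the stated $\H$. The paper itself omits the proof (it is one of the CAS-verified statements, resting on the explicit vertex parametrizations of \cref{app:vtx}), so your argument is a genuine improvement in transparency: by working in the symmetric coordinates $(t_it_j,\,t_i+t_j)$ you exhibit the vertex-on-$\H$ condition and the side-tangent-to-$\C$ condition as literally the same quadratic, which explains \emph{why} the locus is a conic with the stated center and axes rather than merely confirming it numerically. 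One small wording slip: the vertex $Q(t_i,t_j)$ of $T'$ is the pole of the single side $P(t_i)P(t_j)$, so it lies on $\H$ precisely when \emph{that one side} of $T$ touches $\C$ (not ``the two adjacent sides''); the implication you need still holds, since every side of a Poncelet triangle is tangent to $\C$. The appeal to Poncelet closure at the end is also slightly more than needed — the $T'$ family is already a one-parameter family inscribed in $\H$ and circumscribing $\P$ by construction, which is all that ``Ponceletian'' requires here.
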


\subsection{Straight and nearly-straight loci}

An enduring conjecture has been that the locus of the incenter $X_1$ of a Poncelet triangle family can only be a conic if the pair is confocal \cite{helman2021-power-loci}. 

As shown in \cref{fig:polar}, over the polars, the locus of the incenter is, to the naked eye, a straight line. However, upon an algebraic investigation:
 
\begin{proposition}
The locus of the incenter $X_1'$ of $T'$ is one of four branches of the following quartic:

\begin{align*} 
X_1':&(-5  \sqrt{2} - 6) x^2 y^2 + (4  \sqrt{2} + 2) f^2 x^2 + (10 \sqrt{2} + 12) f x y^2\\
+&  (8  \sqrt{2} + 4) f^3 x + (3  \sqrt{2} - 16) f^2 y^2 - 14 f^4=0.
\end{align*}
Specifically, the branch
\[X_1'=\;\left[\frac{ \sqrt{2} y^2 + 2 + 2 y^2 - \sqrt{-4 y^2 + 4 y^4 + 8  \sqrt{2} + 8  \sqrt{2} y^2 - 2  \sqrt{2} y^4}}{ \sqrt{2} y^2 - 2 + 2 y^2},y\right], \]
where $y\neq \pm \sqrt{2-\sqrt{2}}$.
\end{proposition}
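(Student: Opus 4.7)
The plan is to follow the parametrize-then-eliminate strategy used throughout the paper, now backed by a CAS. I will use the explicit parametrization of the vertices of the polar family $T'$ given in \cref{app:vtx}; call them $V_1'(t), V_2'(t), V_3'(t)$, where $t$ is the Poncelet parameter on $\P$ (e.g., the signed $y$-coordinate of one tangency point). Because each $V_i'$ is the pole of a chord of $T$ with respect to $\P$, these are rational functions of $t$. The incenter is then
\[ X_1'(t) = \frac{a(t) V_1'(t) + b(t) V_2'(t) + c(t) V_3'(t)}{a(t)+b(t)+c(t)}, \]
where $a(t), b(t), c(t)$ are the side lengths of $T'$.

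Second, I will handle the square roots that appear in $a,b,c$. Let $\ell_i(t) = |V_j'(t)-V_k'(t)|^2$, which is a rational function of $t$, and introduce three auxiliary variables $s_i$ with $s_i^2 = \ell_i(t)$ so that the formula for $X_1'$ becomes rational in $(t, s_1, s_2, s_3)$. The locus is then cut out by the ideal generated by these three polynomial relations together with $x = X_1'^x(t,s_i)$ and $y = X_1'^y(t,s_i)$. Eliminating $t, s_1, s_2, s_3$ via resultants or a Groebner basis leaves a polynomial in $x, y$; its radical, after removing extraneous factors corresponding to sign choices of the $s_i$ (i.e., the three excenters), should match the stated quartic up to a nonzero scalar.

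To pin down the scalar and rule out extraneous factors, I will check the implicit equation against a few explicit configurations — the isoceles triangle obtained when a tangency point sits on the axis (giving a point $(x_0,0)$ on the locus), and the limit configuration of \cref{prop:n=3} with one side perpendicular to the axis — which should satisfy the quartic and no other candidate factor. The $y \to -y$ symmetry of the configuration (consistent with the quartic containing only even powers of $y$) serves as a further sanity check and already halves the elimination work.

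Finally, the explicit branch parametrization is simply obtained by reading the quartic as a quadratic in $x$: the coefficient of $x^2$ is $(4\sqrt{2}+2)f^2 + (-5\sqrt{2}-6)y^2$, the coefficient of $x$ is linear in $y^2$ with $f$ factor, and the constant term is a polynomial in $y^2, f$, so the quadratic formula yields two branches $x = x_\pm(y)$. One branch reproduces the stated formula for $X_1'$; the other is the locus of the incenter of the mirrored family and, depending on sign conventions, may also arise as one of the excenters. The main obstacle is controlling the elimination so that spurious factors from the $s_i$ get cleanly separated; beyond that, the rest is bookkeeping. For brevity the full CAS computation is omitted, in line with the paper's convention.
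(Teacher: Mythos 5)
Your proposal is correct and is essentially the proof the paper has in mind: the authors state that these loci were established by a CAS computation using the explicit vertex parametrizations of \cref{app:vtx}, which is exactly your parametrize--eliminate scheme (with auxiliary variables for the side-length radicals and the quadratic-in-$x$ reading of the quartic to extract the branch). The only nit is that the polar vertices $Q_i$ in the appendix already involve the radical $\Delta(y_1)$, so they are not rational in the parameter as you assert, but your elimination framework absorbs that with one more auxiliary variable.
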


The locus of $X_1'$ is bounded by two lines parallel to the directrix and approximately $f/850$ apart, see \cref{fig:polar-x1-compressed}. 


\begin{figure}
    \centering
    \includegraphics[width=.7\textwidth]{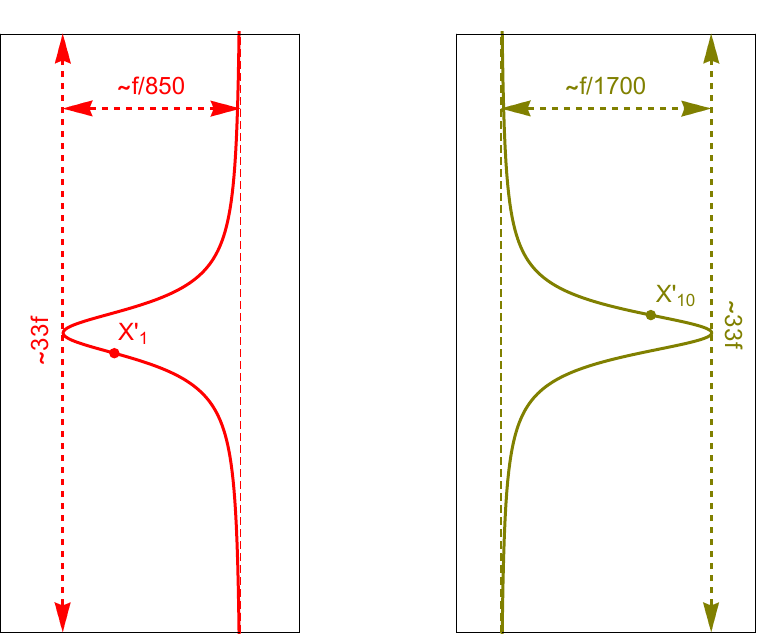}
    \caption{\textbf{Left:} The locus of the polar incenter $X_1'$ is the branch of a quartic which visually is a straight line. It fits within to lines parallel to the directrix and at a distance of $f/850$. In the figure the curve is shown at aspect ratio of 28,000. \textbf{Right:} The locus of the polar Spieker center $X_{10}'$ (perimeter centroid) is an algebraic curve of degree at least four, bounded by two vertical lines separated by $f/1700$. The aspect ratio of the figure is 56,000.}
    \label{fig:polar-x1-compressed}
\end{figure}

Still referring to \cref{fig:polar}:

\begin{proposition}
The locus of the barycenter $X_2'$ of $T'$ is a line parallel to $\D$ and parametrized by
\[
X_2'=\;\frac{1}{3}\left[\left(2 \sqrt{2}-1\right)f,\frac{( 4-8 \sqrt{2} ) f^2 y + y^3}{(8 \sqrt{2} - 12)f^2 + y^2}\right].
\]
\end{proposition}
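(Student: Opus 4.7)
The plan is to compute $T'$ directly by intersecting the tangents to $\P$ at the vertices of $T$, reduce the barycenter $X_2'$ to a pair of symmetric functions in the three vertex parameters, and then use the Poncelet closure condition to pin down the coordinates of $X_2'$.

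First I would parametrize $\P$ by $s \mapsto P(s)=(-s^2/(4f),s)$, so that $F=(-f,0)$ matches the paper's convention; the tangent to $\P$ at $P(s)$ is $4fx+2sy=s^2$, and a short calculation shows that the tangents at parameters $s_i$ and $s_j$ meet at
$$P_{ij}=\Bigl(-\frac{s_is_j}{4f},\,\frac{s_i+s_j}{2}\Bigr).$$
Averaging over the three pairs $(i,j)\in\{(1,2),(1,3),(2,3)\}$ gives
$$X_2'=\Bigl(-\frac{e_2}{12f},\,\frac{e_1}{3}\Bigr),$$
where $e_1=s_1+s_2+s_3$ and $e_2=s_1s_2+s_1s_3+s_2s_3$. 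The proof then reduces to two claims: (i) $e_2$ is constant over $\T$ and equal to $(4-8\sqrt{2})f^2$, which makes the $x$-coordinate of $X_2'$ the claimed $(2\sqrt{2}-1)f/3$; and (ii) viewed as a function of $y=s_1$, $e_1/3$ matches the displayed rational expression.

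For both claims I would use the closure condition: each chord $P(s_i)P(s_j)$ has equation $4fx+(s_i+s_j)y=s_is_j$, and its tangency to the focus-centered circle of radius $r=2(\sqrt{2}-1)f$ yields
$$(4f^2+s_is_j)^2=r^2\bigl(16f^2+(s_i+s_j)^2\bigr).$$
Freezing $s_1$ and reading this as a quadratic in the second parameter, its two roots are precisely $s_2$ and $s_3$, so Vieta produces explicit rational expressions for $s_2+s_3$ and $s_2s_3$ in $s_1$. Substituting $s_1=y$ into $e_1=s_1+(s_2+s_3)$ gives (ii) immediately.

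The main obstacle is (i): plugging the Vieta expressions into $e_2=s_1(s_2+s_3)+s_2s_3$ yields a priori a nontrivial rational function of $s_1$, and one must verify that the specific value $r^2=(12-8\sqrt{2})f^2$ forces it to collapse to the constant $(4-8\sqrt{2})f^2$. This is a direct algebraic check after clearing denominators; in fact ``$e_2$ constant'' translates into a quadratic relation in $R:=r^2$ whose smaller root recovers $r^2=(12-8\sqrt{2})f^2$, giving a pleasant cross-check with Proposition~\ref{prop:n=3}.
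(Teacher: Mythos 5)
Your proposal is correct, and I verified the algebra: with $r^2=(12-8\sqrt{2})f^2$ the tangency quadratic in the second parameter $t$ is $(y^2-r^2)t^2+(8f^2-2r^2)yt+16f^4-16r^2f^2-r^2y^2=0$, whence $s_2+s_3=\frac{(16-16\sqrt{2})f^2y}{y^2+(8\sqrt{2}-12)f^2}$ and $s_2s_3=\frac{(8\sqrt{2}-12)f^2y^2+16(8\sqrt{2}-11)f^4}{y^2+(8\sqrt{2}-12)f^2}$; adding gives $e_1$ equal to the displayed ordinate (times $3$) and $e_2=(4-8\sqrt{2})f^2$ identically, since $(4-8\sqrt{2})(8\sqrt{2}-12)=16(8\sqrt{2}-11)$, so $-e_2/(12f)=(2\sqrt{2}-1)f/3$ as claimed. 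Your route is genuinely different from the paper's: the authors deliberately omit the proof as a CAS computation, which would proceed by brute force from the explicit coordinates of the polar vertices $Q_1,Q_2,Q_3$ listed in the appendix and simplification of their average. By contrast, you never touch those unwieldy coordinates: the observation that the tangents at $P(s_i)$, $P(s_j)$ meet at $\bigl(-s_is_j/(4f),(s_i+s_j)/2\bigr)$ reduces $X_2'$ to the elementary symmetric functions $e_1,e_2$, and Vieta applied to the closure condition does the rest by hand. This buys a human-verifiable proof, and your cross-check that ``$e_2$ constant'' forces $r^4-24f^2r^2+16f^4=0$, whose smaller root is exactly the closure value of \cref{prop:n=3}, is a nice bonus: it shows the constancy of the abscissa is equivalent to Poncelet closure here, not merely a consequence of it. The only point worth a sentence in a final write-up is the degenerate position $y^2=r^2$, where the leading coefficient vanishes and one root escapes to infinity (the configuration of \cref{fig:n=3-proof}); the identity extends there by continuity.
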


\begin{proposition}
The locus of the circumcenter $X_3'$ of $T'$ is a line parallel to $\D$ and parametrized by
\[ X_3'=\;\left[(\sqrt{2} - 1) f,\frac{ (3 \sqrt{2} + 2) (2 \sqrt{2} y^2 - 28 f^2 + y^2) y}{14 (y \sqrt{2} - 2 f + y) (y \sqrt{2} + 2 f + y)}\right].
\]
\end{proposition}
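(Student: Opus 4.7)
The plan is to obtain the abscissa of $X_3'$ from the Euler line together with two previously established facts, and to extract the ordinate from the fact that the circumcircle of $T'$ passes through the focus.

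Two earlier results suffice for the abscissa: (i) by the immediately preceding proposition, the barycenter $X_2'$ has constant abscissa $(2\sqrt{2}-1)f/3$; and (ii) by the standard polar-triangle property recorded at the start of \cref{sec:n3-polar}, the orthocenter $X_4'$ lies on the directrix $\D=\{x=f\}$. The Euler line identity $X_3=\tfrac{1}{2}(3X_2-X_4)$ (a rearrangement of the classical $OG:GH=1:2$ relation) then gives
\[
x(X_3')=\frac{3\cdot(2\sqrt{2}-1)f/3-f}{2}=(\sqrt{2}-1)f,
\]
independently of the family parameter. This already proves the locus is a vertical line parallel to $\D$.

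For the explicit ordinate, I would invoke the remaining classical polar-triangle fact cited at the start of this section: the circumcircle of $T'$ passes through the focus $F=(-f,0)$. Pick any vertex $V=(x,y)$ of $T'$ from the explicit parametrization of \cref{app:vtx}. The equation $|X_3'-F|=|X_3'-V|$, after substitution of the now-known abscissa $(\sqrt{2}-1)f$, collapses to a single linear equation in the ordinate of $X_3'$, with solution
\[
y(X_3')=\frac{\bigl((\sqrt{2}-1)f-x\bigr)^2+y^2-2f^2}{2\,y}.
\]
Substituting the vertex parametrization and simplifying should reproduce the claimed rational expression in $y$.

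The main obstacle is the final algebraic simplification into the compact factored form displayed in the statement; this is a routine but tedious manipulation, precisely the kind of step the authors elsewhere delegate to a CAS. The conceptual content of the argument is the one-line derivation of the abscissa $(\sqrt{2}-1)f$ from the Euler line, made possible by the two inherited polar-triangle properties and the preceding proposition on $X_2'$.
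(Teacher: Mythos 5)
Your argument is correct, and it takes a genuinely different route from the paper, which states this proposition without proof and (as announced in the introduction) verifies it by direct CAS computation of the circumcenter from the vertex parametrization in the appendix. Your derivation of the abscissa is clean and checks out: with $X_2'$ at abscissa $(2\sqrt{2}-1)f/3$ and $X_4'$ on the directrix $x=f$, the relation $X_3=\tfrac{1}{2}(3X_2-X_4)$ gives $\tfrac{1}{2}\bigl((2\sqrt{2}-1)f-f\bigr)=(\sqrt{2}-1)f$, and this is consistent both with the paper's corollary that the Euler line of $T'$ passes through $F$ and with $X_4'=3X_2'-2X_3'$ landing at abscissa $f$. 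Your ordinate step is also right: since $\bigl((\sqrt{2}-1)f+f\bigr)^2=2f^2$, the condition $|X_3'-F|=|X_3'-V|$ is linear in the unknown ordinate, which is a real simplification over solving the full three-equation circumcenter system. Two caveats, neither fatal: (i) your structural claim inherits its force from the preceding $X_2'$ proposition, which is itself only CAS-verified in the paper --- though averaging the three explicit polar vertices $Q_i$ is a far lighter computation than a circumcenter, so the reduction is meaningful; and (ii) the $y$ in the statement's parametrization is the parameter $y_1$ of the original triangle $T$, not the ordinate of a vertex of $T'$, so your final substitution must be expressed in terms of $y_1$ before comparison with the displayed formula. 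What your approach buys is a conceptual, essentially computation-free proof that the locus is a line parallel to $\D$, isolating the CAS work to the closed-form ordinate alone.
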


\noindent Referring to \cref{fig:polar-stationary}, the above expressions for $X_2'$ and $X_3'$ yield:

\begin{corollary}
The (varying) Euler line $X_2' X_3'$ of the polar family passes through the focus $F=[-f,0]$ of $\P$. \end{corollary}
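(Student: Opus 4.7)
The plan is to verify the corollary directly from the explicit parametrizations of $X_2'$ and $X_3'$ supplied by the two preceding propositions, together with $F = (-f,0)$. Collinearity of three points in the plane is equivalent to the vanishing of the determinant formed by two difference vectors; since $y_F = 0$, this reduces to the single scalar identity
$(x_{X_2'} - x_F)\, y_{X_3'} = (x_{X_3'} - x_F)\, y_{X_2'}$
as a function of the parameter $y$.

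First I would compute the $x$-differences, which do not depend on $y$: namely $x_{X_2'} - x_F = \tfrac{1}{3}(2\sqrt{2}-1)f + f = \tfrac{2}{3}(\sqrt{2}+1)f$ and $x_{X_3'} - x_F = (\sqrt{2}-1)f + f = \sqrt{2}\,f$. Their ratio simplifies to $(\sqrt{2}+2)/3$, so the claim collapses to showing that $y_{X_2'}/y_{X_3'} = (\sqrt{2}+2)/3$ as a rational function of $y$.

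Next I would simplify the denominator of $y_{X_3'}$ using the difference-of-squares identity $(y\sqrt{2} + y - 2f)(y\sqrt{2} + y + 2f) = (3+2\sqrt{2})y^2 - 4f^2$, then cross-multiply. This converts the desired equality into a polynomial identity of degree four in $y$ with coefficients in $\mathbb{Q}[\sqrt{2}][f]$. Matching coefficients of $y^4$, $y^2$, and $y^0$ yields three surd identities to verify, each a quick computation (for example the key $f^4$ identity reduces, up to sign, to $(\sqrt{2}+2)(5\sqrt{2}-6) = 2(2\sqrt{2}-1)$).

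The main obstacle is entirely clerical: the expansion in $\mathbb{Q}[\sqrt{2}][f,y]$ is routine but error-prone by hand, and is ideally delegated to the CAS already used elsewhere in the paper. A more synthetic alternative would be to exploit the Akopyan--Zaslavsky fact that $X_4'$ sweeps the directrix $x=f$: since $(x_{X_4'}-x_F)/(x_{X_3'}-x_F) = 2f/(\sqrt{2}f) = \sqrt{2}$, collinearity of $F, X_3', X_4'$ is equivalent to $y_{X_4'} = \sqrt{2}\,y_{X_3'}$, and then membership of $X_2'$ follows from the standard Euler-line ratio $X_2' = (2X_3' + X_4')/3$. However, establishing $y_{X_4'} = \sqrt{2}\,y_{X_3'}$ still seems to require invoking the same explicit parametric data, so the direct computation above is the most economical route.
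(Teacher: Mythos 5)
Your proposal is correct and matches the paper's route: the corollary is stated there as an immediate consequence of the explicit parametrizations of $X_2'$ and $X_3'$, i.e.\ exactly the determinant/cross-multiplication check you describe (the resulting degree-four identity in $y$ does hold; e.g.\ the $y^4$, $y^2f^2$, and $f^4$ coefficients all match after using $(y\sqrt{2}+y-2f)(y\sqrt{2}+y+2f)=(3+2\sqrt{2})y^2-4f^2$). The paper simply omits the algebra as a CAS verification, so your write-up is the same argument made explicit.
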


Still referring to \cref{fig:polar-stationary}, the next 4 propositions were obtains from experimental evidence and verification by CAS:

\begin{proposition}
The locus of the symmedian point $X_6'$ of $T'$ is a line parallel to $\D$ and parametrized by
\[X_6'=\;
\left[(5 - 3 \sqrt{2}) f, \frac{(3 \sqrt{2} + 4) (2 \sqrt{2} y^2 - 28 f^2 + y^2) y}{14 (y \sqrt{2} - 2 f + y) (y \sqrt{2} + 2 f + y)}\right]
.\]
\end{proposition}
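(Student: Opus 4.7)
The plan is to parallel the proofs given above for $X_2'$ and $X_3'$: invoke the explicit vertex parametrization of the polar family $T'$ from \cref{app:vtx}, substitute into the defining combination for the symmedian point, and reduce in a CAS. Recall that $X_6$ has barycentric coordinates $(a^2:b^2:c^2)$, so
\[
X_6' \;=\; \frac{a^2 A' + b^2 B' + c^2 C'}{a^2+b^2+c^2},
\]
where $A',B',C'$ are the vertices of $T'$ --- themselves the pairwise intersections of the three tangent lines to $\P$ at the vertices of $T$ --- and $a,b,c$ are the opposite side lengths of $T'$.

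First I would compute the squared side lengths $a(t)^2$, $b(t)^2$, $c(t)^2$ as rational functions of a Poncelet parameter $t$, then form the weighted sum above and split into $x$- and $y$-coordinates. Two identities remain to verify: that the $x$-coordinate collapses to the constant $(5-3\sqrt{2})f$, and that the $y$-coordinate reduces to the stated rational function of $y$. Both rest on the closure relation $r/f = 2(\sqrt{2}-1)$ from \cref{prop:n=3}, which provides the symmetric-function identities among the three tangency points needed to cancel the apparent $t$-dependence of the $x$-component.

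A more conceptual alternative is Zaslavsky's polar-duality argument used in the proof of \cref{prop:x4}. After applying the polarity with center $F$, the parabola $\P$ becomes the unit circle and the three tangencies of $T$ with the caustic become three points $a,b,c$ on that circle satisfying the Euler-style relations $a+b+c = 2\sqrt{2}-2+(3-2\sqrt{2})abc$ and $ab+bc+ca = 3-\sqrt{2}+(2\sqrt{2}-2)abc$ recorded there. The $x$- and $y$-coordinates of $X_6'$ then become symmetric expressions in $a,b,c$; once the constraints on the elementary symmetric polynomials are substituted, the $x$-coordinate is forced to be independent of $abc$ and hence constant, while the $y$-coordinate retains the single-parameter dependence seen in the stated formula.

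The main obstacle is purely computational: because the symmedian weights are quadratic in the side lengths, the numerator and denominator of $X_6'$ have substantially higher degrees than for $X_2'$ (equal weights) or $X_3'$ (perpendicular-bisector construction). The cancellations needed to reduce the $x$-coordinate to a constant essentially require CAS assistance, in line with the paper's stated approach for its other polar-family identities; structurally, however, they arise from the same symmetric-function machinery governing \cref{prop:n=3}.
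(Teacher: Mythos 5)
Your proposal is correct and matches the paper's (omitted, CAS-based) proof: the authors verify this identity by substituting the explicit polar-vertex parametrization $Q_1,Q_2,Q_3$ from \cref{app:vtx} into the barycentric formula $X_6=(a^2:b^2:c^2)$ and simplifying symbolically, exactly as you outline. Your alternative sketch via Zaslavsky's polarity and the symmetric-function relations is a reasonable bonus but is not the route the paper takes for this statement.
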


\begin{proposition}
The locus of $X_{10}'$ of the polar family is an algebraic curve of degree  four given by
{\small
\begin{align*}
  X_{10}':\; &4\left( 11\,\sqrt {2}+16 \right) {x}^{4}- 4\left(  3\,\sqrt {2}+5 \right) {x}^{2}{y}^{2}-4\, \left( 37\,\sqrt {2}+50
 \right) f{x}^{3}+8\, \left( 2\,\sqrt {2
}+1 \right) fx{y}^{2}\\
&+21
\, \left( 5\,\sqrt {2}+8 \right) {f}^{2}{x}^{2} -4\, \left( 9\,\sqrt {2}+8 \right) {f}^{3}x-
 \left( \sqrt {2}+4 \right) {f}^{2}{y}^{2}+7\,{f}^{4}
 =0
 \end{align*}
 }

This locus is tightly bound by the following two lines parallel to the directrix:

\[x=\left(\sqrt {2} -1 + \frac{\sqrt{10 - 7\sqrt{2}}}{2}  \right) f  
\;\;\textrm{and}\;\;
x=\left(\sqrt{2} - 2^{-1/4}
 \right) f.
\]
The distance between these lines is   approx. $f/1700$.
\end{proposition}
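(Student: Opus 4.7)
The plan is to compute $X_{10}'$ directly from the explicit parametrization of the polar vertices given in \cref{app:vtx}, then eliminate the parameter to recover an implicit equation. Writing $T'=\{P_1',P_2',P_3'\}$ with rational parametrization in some variable $t$ (one can use the slope of a tangent line through the focus as in the derivation of the earlier propositions), I would form the side lengths $a'=|P_2'P_3'|$, $b'=|P_3'P_1'|$, $c'=|P_1'P_2'|$ and assemble the Spieker center via the standard formula $X_{10}'=\bigl((b'+c')P_1'+(c'+a')P_2'+(a'+b')P_3'\bigr)/\bigl(2(a'+b'+c')\bigr)$. Because each side length is a square root of a rational function of $t$, the coordinates $(x(t),y(t))$ of $X_{10}'$ lie in a degree-$8$ extension of $\mathbb{Q}(t,f)$, and this is precisely the reason the locus is not a conic.

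Next I would eliminate the radicals. A clean way is to introduce auxiliary symbols $u_i$ with $u_i^2$ equal to the corresponding rational squared side length and work inside the quotient ring $\mathbb{Q}(t,f)[u_1,u_2,u_3]/(u_i^2-\ell_i^2(t))$; then clear denominators in the expressions $x=x(t,u_1,u_2,u_3)$ and $y=y(t,u_1,u_2,u_3)$, and compute successive resultants with respect to $u_1,u_2,u_3,t$ against the relations $u_i^2-\ell_i^2$. The output will be a (generally reducible) polynomial in $x,y,f$; after factoring and discarding spurious components introduced by the squarings (these typically correspond to sign choices $\pm u_i$ in the weighted sum), one of the irreducible factors should match the degree-four polynomial claimed in the statement. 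Confirming the degree is $4$ in $(x,y)$, coaxial with $\P$ (only even powers of $y$), and with leading term agreeing with the stated coefficients, finishes the locus identification.

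For the two bounding lines I would study $x(t)$ as a function of $t$ alone and solve $x'(t)=0$ with a resultant against the polynomial defining the locus; equivalently, the extreme values of $x$ are the real roots of the discriminant in $y$ of the quartic equation, set to zero. This reduces to a univariate polynomial in $x/f$ whose real roots are exactly $\sqrt{2}-1+\tfrac{1}{2}\sqrt{10-7\sqrt{2}}$ and $\sqrt{2}-2^{-1/4}$; a direct substitution into the quartic equation plus a check that the corresponding $y$-fiber is a double root verifies these are extremal vertical tangents. Numerically subtracting the two values gives $f/1700$ to the stated precision; symbolically one may simplify $(\sqrt{2}-2^{-1/4})-(\sqrt{2}-1+\tfrac{1}{2}\sqrt{10-7\sqrt{2}})=1-2^{-1/4}-\tfrac{1}{2}\sqrt{10-7\sqrt{2}}$ and note that this nested radical is positive and of magnitude $\approx 5.88\cdot 10^{-4}$.

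The main obstacle is keeping the elimination step under control: the triple square-root structure of the perimeter forces an $8$-fold branched cover over the locus, and the naive resultant produces a polynomial of very large nominal degree that must be factored over $\mathbb{Q}(\sqrt{2})$ to isolate the quartic. I would therefore use the explicit symbolic form of $r/f=2(\sqrt{2}-1)$ from \cref{prop:n=3} throughout, so that the ground field is $\mathbb{Q}(\sqrt{2})$ and factorization can be carried out in a CAS such as the one referenced in the paper; the remaining verification that the correct branch has been selected can be performed by evaluating at any symmetric configuration (e.g.\ the isoceles member of $\T$) and checking that the resulting point satisfies the claimed quartic.
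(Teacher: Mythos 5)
The paper gives no written proof of this proposition: it is one of the results the authors state were ``proved with a computer-algebra system \ldots{} using the explicit parametrizations given in \cref{app:vtx}'', with the CAS proof omitted. Your plan --- parametrize the polar vertices, assemble $X_{10}'$ from the barycentric weights $b+c:c+a:a+b$, eliminate the radicals by resultants over $\mathbb{Q}(\sqrt{2})$, and discard the spurious factors introduced by squaring --- is exactly that methodology, and it is sound. (One bookkeeping remark: the appendix parametrizes the family by $y_1$ with a single radical $\Delta$, so the extension you work in is generated by $\Delta$ together with the three side-length square roots; your ``degree-$8$'' count presupposes a rational parametrization of the vertices, which you would need to exhibit or else carry $\Delta$ as a fourth auxiliary symbol.)

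The one substantive correction concerns the bounding lines. The stated quartic contains $y$ only through $y^2$ and has no $y^4$ term, so it has the form $P(x)\,y^2+Q(x)=0$ with $\deg P=2$, $\deg Q=4$. Your discriminant-in-$y$ criterion gives $-4P(x)Q(x)=0$ and does produce both lines, but for different reasons: $x=(\sqrt{2}-2^{-1/4})f$ is a root of $Q$, where the component meets the axis $y=0$ and has a genuine vertical tangent, whereas $x=\bigl(\sqrt{2}-1+\tfrac12\sqrt{10-7\sqrt{2}}\bigr)f$ is a root of $P$ (one checks the roots of $P$ are $\sqrt{2}-1\pm\tfrac12\sqrt{10-7\sqrt{2}}$), i.e.\ a \emph{vertical asymptote} of the curve, approached as $y\to\infty$ but never attained. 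Consequently your proposed verification that ``the corresponding $y$-fiber is a double root'' fails at the left line (the fiber there is empty), and the alternative of solving $x'(t)=0$ would find only the attained extremum, not the asymptotic bound. The fix is small: after obtaining the quartic, write $y^2=-Q(x)/P(x)$ and determine the maximal interval on which the right-hand side is nonnegative for the relevant component; its endpoints are the root of $Q$ and the root of $P$ quoted in the statement, and the numerical gap $\approx 5.9\cdot10^{-4}f$ matches $f/1700$.
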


\subsection{Stationary points}

\begin{figure}
    \centering
    \includegraphics[trim=100 0 75 25,clip,width=\textwidth,frame]{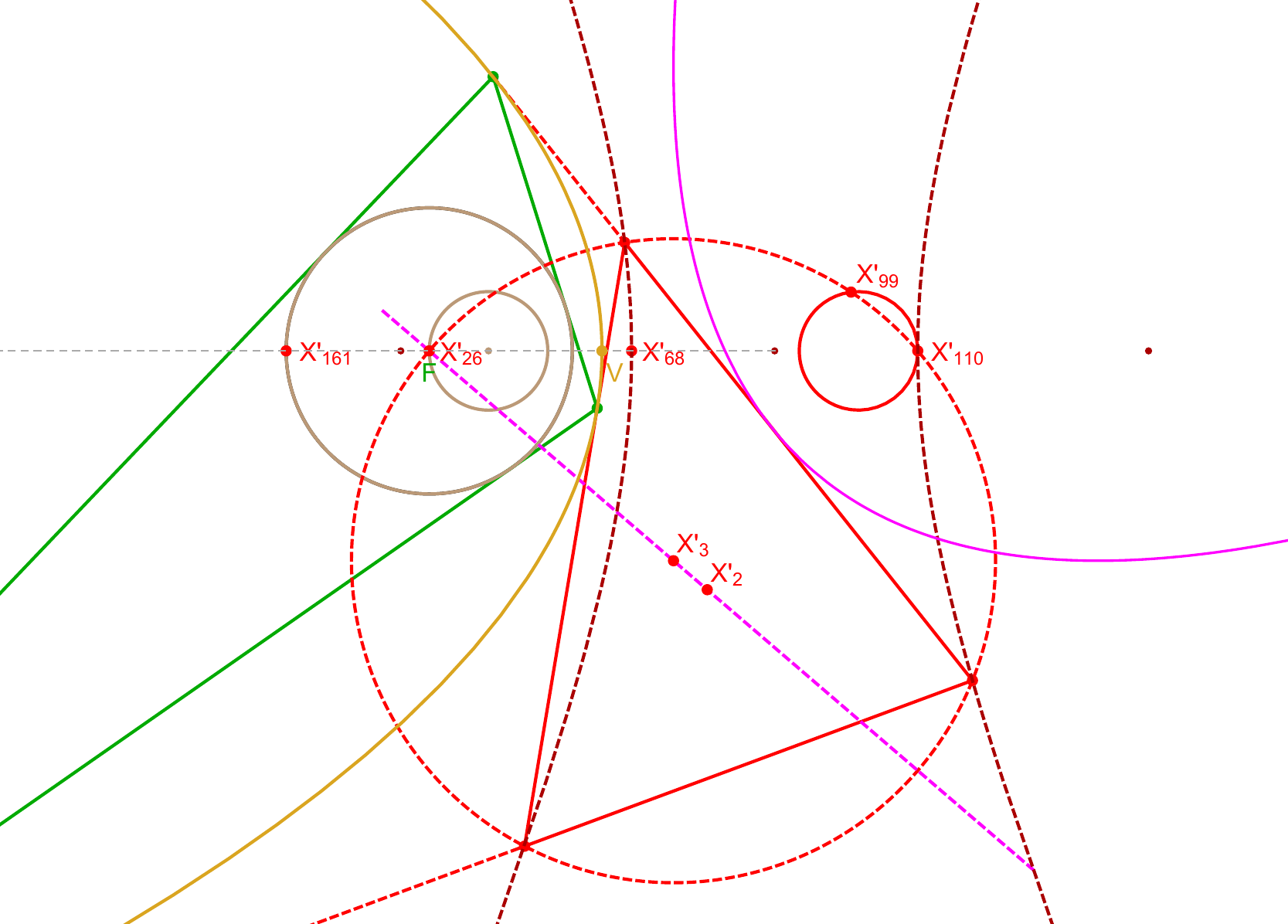}
    \caption{Over the polar family (red), the Euler line (dashed magenta) will always pass through the focus $F$ of the parabola-inscribed family (green). $X_{26}'$ (resp. $X_{68}'$ and $X_{110}'$) remain stationary at the focus $F$ (resp. the two vertices of the hyperbola to which the polar family is inscribed). Experimentally, $X_{161}$ is stationary at the intersection of the caustic with the parabola axis farthest from the latter's vertex. Also shown is the Kiepert inparabola (magenta), whose focus is $X_{110}$ and directrix is the Euler line. Thus the polar family simultaneously inscribes the original parabola (gold) and the Kiepert (magenta). Finally, the figure depicts the circular locus of Steiner point $X_{99}'$ of the polar family.}
    \label{fig:polar-stationary}
\end{figure}

The circumcenter of the tangential triangle appears as $X_{26}$ on \cite{etc}. 

\begin{proposition}
Point $X_{26}'$ of $T'$ is stationary at the focus $F$ of $\P$.
\end{proposition}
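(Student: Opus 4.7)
The plan is to verify by direct calculation that the circumcircle of the tangential triangle of $T'$ is centered at $F$, i.e., that the three vertices $A^{**},B^{**},C^{**}$ of this tangential triangle are equidistant from $F$. By definition, $A^{**}$ is the intersection of the tangents to the circumcircle $\omega$ of $T'$ at $B'$ and $C'$ (equivalently, the pole of $B'C'$ with respect to $\omega$), and similarly for $B^{**}$ and $C^{**}$; the circumcenter of $A^{**}B^{**}C^{**}$ is then $X_{26}'$.

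First I would invoke the classical fact, already noted at the start of this section, that $F\in\omega$. Writing $O'=X_3'$ for the circumcenter of $T'$ and $R'$ for its circumradius, this gives $|O'F|=R'$ and zero power of $F$ with respect to $\omega$, which simplifies several intermediate expressions. Next, using the explicit parametrization of the vertices of $T'$ from \cref{app:vtx}, compute $O'$, $R'$, the tangent lines to $\omega$ at $A',B',C'$, and their pairwise intersections $A^{**},B^{**},C^{**}$ as rational functions of the Poncelet parameter. Finally, show $|FA^{**}|^2=|FB^{**}|^2=|FC^{**}|^2$ and that this common value does not depend on the parameter; both reductions hold after substituting the Poncelet closure condition $r/f=2(\sqrt{2}-1)$ from \cref{prop:n=3}.

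The main obstacle is the algebraic bookkeeping: the $V^{**}_i$ have rational coordinates in the parameter whose symbolic simplification is cumbersome, which is why the authors delegate such verifications to a CAS. A more conceptual alternative would reuse Zaslavsky's complex-number framework from the proof of \cref{prop:x4}: polarize $T$ with respect to the caustic $\C$ to obtain a bicentric triangle $ABC$ inscribed in the unit circle with incircle through $F=0$, and chase the statement $X_{26}(T')=F$ through the chain of dualities ($T\leftrightarrow T^{\vee}$ via polarization in $\C$; $T\leftrightarrow T'$ via polarization in $\P$). The target then becomes a symmetry assertion about the origin, in principle provable from the symmetric-function identities Zaslavsky derived; the difficulty is cleanly tracking the image of the tangential triangle of $T'$ under this double-polar transformation.
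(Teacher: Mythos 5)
Your core plan---using the explicit parametrization of $T'$ from the appendix to verify that the three vertices of the tangential triangle of $T'$ (the poles of its sides with respect to its circumcircle $\omega$) are equidistant from $F$---is precisely the CAS verification the paper performs and omits for brevity, so the approach is correct and essentially the same as the paper's. One adjustment: equidistance $|FA^{**}|=|FB^{**}|=|FC^{**}|$ for each triangle of the family already identifies $F$ as the circumcenter of the tangential triangle, i.e.\ as $X_{26}'$; your further requirement that this common value be independent of the Poncelet parameter is a separate, stronger assertion that the proposition does not make and that you should not build into the verification (indeed the circumradius of $T'$ itself varies over the family, since $X_3'$ sweeps a line while $F$ remains on $\omega$, so there is no reason to expect the tangential circumradius to be conserved).
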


Note $X_{26}$ does not lie in general on the circumcircle of a reference triangle. In our case it does since, as mentioned above, the circumcircle of the polar contains the focus.

The Kiepert parabola of a triangle is an inscribed parabola whose focus is labeled $X_{110}$ on \cite{etc}. Its directrix is the Euler line \cite{mw}. Referring to \cref{fig:polar-stationary}:

\begin{proposition}
The focus $X_{110}'$ of the Kiepert parabola (resp. the Prasolov point $X_{68}'$) of the polar family is stationary at the vertex of $\H$ farthest (resp. closest) to the focus of $\P$. Furthermore, $X_{161}'$ is stationary at the intersection of the incircle with the parabola axis farthest from the parabola vertex, i.e., at  $[(1-2\sqrt{2})f,0]$. 
\end{proposition}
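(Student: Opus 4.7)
The plan is to reduce each of the three stationarity claims to a symbolic identity built on the explicit parametrization of the polar-triangle vertices supplied in \cref{app:vtx}. Given $V_1'(t), V_2'(t), V_3'(t)$ as functions of the Poncelet parameter $t$, the side lengths $a(t), b(t), c(t)$ become explicit algebraic expressions, so that any Kimberling center with barycentric coordinates $[f_1:f_2:f_3]$ takes Cartesian form $(f_1 V_1'+f_2 V_2'+f_3 V_3')/(f_1+f_2+f_3)$. Each stationarity statement then reduces to verifying that this expression is independent of $t$ and equals the announced point.

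I would dispose of the three cases in increasing order of difficulty. For $X_{68}'$ the Prasolov point has a short barycentric recipe on \cite{etc} that feeds directly into the pipeline; the target $(f(3-2\sqrt{2}),0)$ is independently identified as the left vertex of $\H$ by solving the hyperbola equation at $y=0$ and using the identity $3+2\sqrt{2}=(\sqrt{2}+1)^2$. For $X_{161}'$ the target $(f(1-2\sqrt{2}),0)$ is first sanity-checked against $-f-r$ with $r=2(\sqrt{2}-1)f$ from \cref{prop:n=3}, confirming that it is the leftmost point of the caustic, and the same CAS recipe then applies. For $X_{110}'$ I would additionally exploit the geometric characterization as the focus of the Kiepert inparabola whose directrix is the Euler line: by the preceding corollary every polar Euler line passes through $F$, and by the known property recalled at the start of \cref{sec:n3-polar}, the polar circumcircle also passes through $F$. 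Combining membership in a moving circle through $F$ with the Steiner-line relation (the reflections of $X_{110}'$ in the three sides of $T'$ are collinear on the Euler line) pins $X_{110}'$ down, and the CAS verification should then collapse the result to the right vertex $(f(2\sqrt{2}-1),0)$ of $\H$.

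The main obstacle will be symbolic bloat: combining the $\sqrt{2}$-laden $N=3$ Poncelet parametrization with three-term barycentric formulas produces intermediate expressions whose constancy in $t$ is not immediately visible. One must coax the CAS to recognize simplifications such as $\sqrt{2\sqrt{2}+3}=\sqrt{2}+1$ and to rationalize the nested radicals stemming from the side-length formulas. A secondary subtlety in the $X_{110}'$ case is branch selection: the isogonal-conjugate-of-an-Euler-infinite-point construction returns two antipodal candidates on the circumcircle, so one must verify that the chosen branch is the Kiepert focus rather than its antipode throughout the family. A continuity argument anchored at the axis-symmetric member of the family, where the Kiepert focus manifestly coincides with the right vertex of $\H$, resolves this ambiguity.
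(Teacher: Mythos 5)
Your proposal matches the paper's own methodology: the authors state that these stationarity results were verified by CAS using the explicit vertex parametrizations of \cref{app:vtx}, and they omit the computation, which is exactly the pipeline you describe (barycentric recipes from \cite{etc} fed into the parametrized polar triangle, then checking independence of the Poncelet parameter). Your independent identifications of the targets are also correct: the vertices of $\H$ at $y=0$ are $x=f(2\sqrt{2}-1)$ and $x=f(3-2\sqrt{2})$ via $2\sqrt{2}+3=(\sqrt{2}+1)^2$, and $-f-r=(1-2\sqrt{2})f$ for the caustic's left extreme.
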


\begin{observation}
Over the polar family, the vertex of its Kiepert parabola sweeps a circle.
\end{observation}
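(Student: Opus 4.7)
The plan is to combine two facts already established above: (i) the Kiepert focus $X_{110}'$ of the polar family is stationary at the right vertex $V_\H$ of $\H$, and (ii) the Kiepert directrix, which by definition is the Euler line of the polar triangle, always passes through the focus $F$ of $\P$. So, throughout the family, the polar Kiepert parabola has a fixed focus and a directrix that merely rotates about a fixed point.

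Next I would invoke the standard characterization of the vertex of a parabola as the midpoint of the segment joining its focus to the foot of the perpendicular from the focus to its directrix. Set $P := V_\H$, and for each member of the family let $Q$ be the foot of the perpendicular from $P$ onto the current Euler line $\ell$. Since $\ell$ passes through the fixed point $F$ and $PQ \perp \ell$, the angle $\angle PQF$ is a right angle, so by the converse of Thales' theorem $Q$ lies on the circle $\Gamma$ of diameter $PF$ (center $(P+F)/2$, radius $|PF|/2$).

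The Kiepert vertex is then $M = (P+Q)/2$. As $Q$ varies on $\Gamma$, $M$ traces the image of $\Gamma$ under the homothety centered at $P$ with ratio $1/2$; this image is the circle of center $(3P+F)/4$ and radius $|PF|/4$, which yields the claimed circular locus. The whole argument is essentially forced once (i) and (ii) are in hand; the only matter requiring a brief check is that the direction of the Euler line varies continuously with the family parameter so that the entire circle (or at least a dense subset of it) is actually swept, which follows from the explicit parametrization of $X_2'$ and $X_3'$ given earlier.
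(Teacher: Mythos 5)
Your argument is correct, and it is worth noting that the paper itself offers no proof here: the claim is recorded only as an \emph{Observation}, presumably verified numerically or by CAS like the surrounding statements. Your derivation therefore adds something genuine: it reduces the observation to two facts the paper does state earlier, namely that $X_{110}'$ (the Kiepert focus of the polar triangle) is stationary at the right vertex $P$ of $\H$, and that the Euler line (the Kiepert directrix) always passes through the fixed point $F=(-f,0)$. The chain ``vertex of a parabola is the midpoint of the focus and its foot of perpendicular on the directrix'' $\Rightarrow$ ``the foot $Q$ lies on the circle of diameter $PF$ by Thales'' $\Rightarrow$ ``the midpoint $(P+Q)/2$ lies on the half-scale homothetic image of that circle, centered at $(3P+F)/4$ with radius $|PF|/4$'' is sound, and the circle is nondegenerate since $P\neq F$ (the right vertex of $\H$ lies near $x\approx 1.83f$ while $F$ is at $x=-f$). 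Two small caveats: your proof is conditional on the two cited propositions, whose own proofs the paper omits as CAS-based; and, as you note, strictly speaking one only gets containment in a circle unless one also checks that the Euler line's direction actually varies, which the parametrizations of $X_2'$ and $X_3'$ confirm. Since the observation as stated claims no more than a circular locus, this is a complete and more informative argument than the paper provides.
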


\subsection{Linear loci galore} Referring to \cref{fig:polar-barcode}:

\begin{figure}
    \centering
    \includegraphics[trim=70 0 20 0,clip,width=\textwidth,frame]{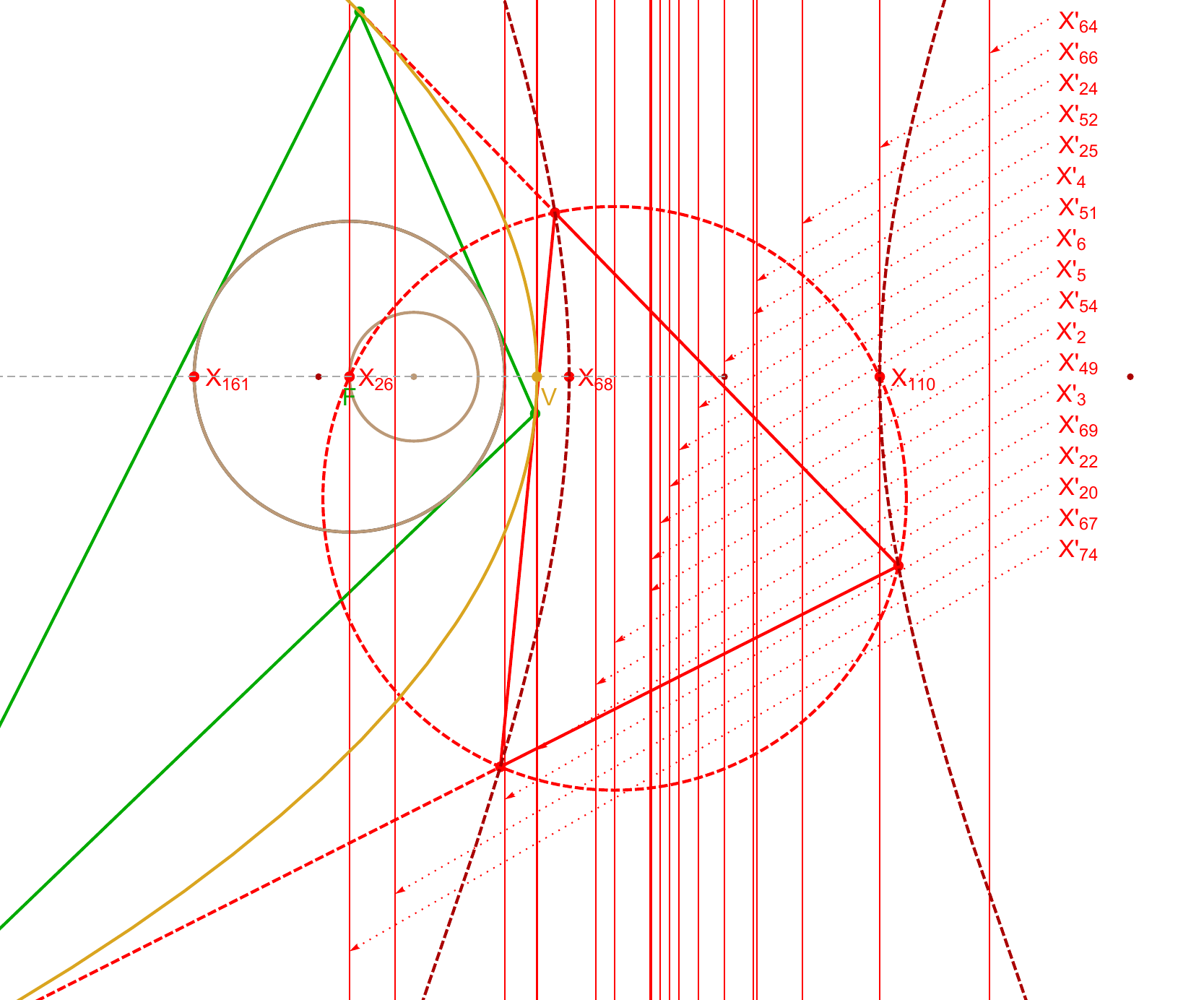}
    \caption{Many triangle centers of the polar family sweep lines parallel to the directrix. The following are shown: $X_k$, $k=$2, 3, 4, 5, 6, 20, 22, 24, 25, 49, 51, 52, 54, 64, 66, 67, 69, 74.}
    \label{fig:polar-barcode}
\end{figure}

\begin{observation}
Over the first 1000 triangle centers in \cite{etc}, the following triangle centers of $T'$ sweep linear loci parallel to $\D$: $X_k', k=$2, 3, 4, 5, 6, 20, 22, 23, 24, 25, 49, 51, 52, 54, 64, 66, 67, 69, 74, 113, 125, 140, 141, 143, 146, 154, 155, 156, 159, 182, 184, 185, 186, 193, 195, 206, 235, 265, 323, 343, 368, 370, 373, 376, 378, 381, 382, 389, 394, 399, 403, 427, 428, 468, 546, 547, 548, 549, 550, 567, 568, 569, 575, 576, 578, 597, 599, 631, 632, 858, 895, 973, 974. 
\end{observation}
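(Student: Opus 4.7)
\medskip

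\noindent\textbf{Proof proposal.} Since the statement is an enumeration over a finite list, the natural approach is computer-algebraic verification. The plan is: (1) using the explicit rational parametrization of the vertices of $T'$ provided in \cref{app:vtx}, compute the side lengths $a(u),b(u),c(u)$ as functions of the Poncelet parameter $u$; (2) for each index $k$ in the list, fetch the trilinear or barycentric definition of $X_k$ from \cite{etc} and substitute to obtain the Cartesian coordinates of $X_k'$ as a rational function of $u$; (3) verify symbolically that the $x$-coordinate simplifies to a $u$-independent constant. Step (3) reduces, after clearing denominators, to a single polynomial identity in $u$ per center, which is routine to check in a CAS.

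A substantial shortcut accounts for the large sub-list of Euler-line centers (such as $X_2,X_3,X_4,X_5,X_{20},X_{22}\text{--}X_{25},X_{140},X_{185},X_{235},X_{265},\ldots$). By the corollary proved above, the Euler line of $T'$ passes through the fixed focus $F=(-f,0)$, and by the preceding propositions $X_3'$ lies on the vertical line $x=(\sqrt{2}-1)f$, so $X_3'-F$ always has $x$-component equal to $\sqrt{2}\,f$. Writing any Euler-line center as $X_k'=F+s_k(u)\,(X_3'-F)$, its $x$-coordinate is $-f+s_k(u)\cdot\sqrt{2}\,f$, and the claim collapses to constancy of the single scalar $s_k(u)$. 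For $X_2,X_5,X_{20}$ this is immediate from the classical affine positions on the Euler line; for the remaining Euler-line centers, $s_k(u)$ must still be checked by CAS, but only as a one-variable rational simplification.

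For the non-Euler-line centers in the list --- $X_6,X_{49},X_{51},X_{52},X_{54},X_{64},X_{66},X_{67},X_{69},X_{74},\ldots$ --- no such reduction is available, and each must be verified individually. \emph{The main obstacle is bookkeeping rather than mathematics:} the parametrization of $T'$ inherits the radicals from the closure condition $r/f=2(\sqrt{2}-1)$, and the defining polynomials of high-index Kimberling centers in $a,b,c$ are of large degree, so the intermediate rational functions grow quickly and require careful normalization (e.g.\ rationalizing by the conjugate over $\mathbb{Q}(\sqrt 2)$) before the CAS will recognize the $x$-coordinate as constant.

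Finally, a natural follow-up, which I would not attempt in the present note, is to seek a uniform structural explanation: identify exactly which functionals of $(a,b,c)$ produce centers whose $x$-coordinate inherits constancy from the geometric constraints on $T'$ (circumscribing $\P$, inscribed in $\H$, circumcircle through $F$, orthocenter on $\D$, Euler line through $F$). Such a characterization would replace the case-by-case verification by a single criterion and likely explain the apparent density of linear-locus centers in the ETC.
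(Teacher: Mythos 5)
The paper offers no proof of this statement at all: it is labelled an \emph{Observation}, supported by numerical experiment over the first 1000 entries of \cite{etc}, and the authors explicitly state in the introduction that they omit all CAS-based verifications. Your plan is therefore not competing with an argument in the paper; it is the natural way to turn the observation into a theorem, and it is sound as far as it goes. Your Euler-line shortcut is a genuine addition, but it can be stated more sharply: since the loci in question are lines parallel to the directrix $x=f$, the claim for each center is exactly that its abscissa is constant, and \emph{both} $X_3'$ (abscissa $(\sqrt2-1)f$, by the proposition) and $X_4'$ (abscissa $f$, since the orthocenter of a parabola-polar triangle lies on the directrix) already have constant abscissa. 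Hence every center that is a \emph{fixed} affine combination of $X_3$ and $X_4$ --- $X_2,X_5,X_{20},X_{140},X_{376},X_{381},X_{382},X_{546}$--$X_{550},X_{631},X_{632},\dots$ --- is settled with no computation at all; your parametrization by $s_k(u)$ relative to $F$ and $X_3'$ obscures this slightly, because the ``classical affine position'' of $X_2$ is fixed relative to $X_3,X_4$, not relative to $F=X_{26}'$ whose Euler-line position is shape-dependent. The Euler-line centers with shape-dependent ratios ($X_{22}$--$X_{25},X_{186},X_{378},X_{403},X_{427},X_{428},X_{468},X_{858},\dots$) and all non-Euler-line centers do require the per-index CAS check you describe, so the bulk of the work remains a finite symbolic verification, consistent with how the paper treats all such claims. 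One small caution: ``sweep linear loci'' asserts the locus \emph{is} (a portion of) a line, so strictly one should also note the ordinate is non-constant, which is immediate from the explicit formulas.
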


\subsection{A pencil of circular loci} Referring to \cref{fig:polar-stationary}:

\begin{proposition}
The locus of the Steiner point $X_{99}'$ is a circle whose center $O_{99}'$ lies on the axis of $\P$ of radius $R_{99}'$ such that at its right endpoint it touches $X_{110}'$. Explicitly,
\[ 
O_{99}'=\left[ (6\sqrt{2} - 7) f,0\right],\;\;\;R_{99}'= 2f \sqrt{17-12\sqrt{2}} \; \cdot  \]
\end{proposition}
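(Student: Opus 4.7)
The plan is to follow the paper's usual strategy: combine the explicit vertex parametrization of the polar family $T'$ from \cref{app:vtx} with the standard barycentric formula for the Steiner point, then verify the claimed circle equation identically in the Poncelet parameter. Concretely, compute the squared side lengths $a^2, b^2, c^2$ as functions of the parameter $t$, assemble $X_{99}'(t)$ in Cartesian coordinates from its barycentrics
\[
\left(\tfrac{1}{b^2-c^2}:\tfrac{1}{c^2-a^2}:\tfrac{1}{a^2-b^2}\right),
\]
and verify
\[
\bigl(x(t)-(6\sqrt{2}-7)f\bigr)^2 + y(t)^2 - 4f^2(17-12\sqrt{2}) \equiv 0.
\]
After clearing denominators this becomes a polynomial identity in $t$ over $\mathbb{Q}(\sqrt{2})$, which a CAS certifies routinely, in line with the convention announced in the introduction.

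The tangency claim then requires no further heavy computation. Since $17-12\sqrt{2}=(3-2\sqrt{2})^2$ and $3-2\sqrt{2}>0$, the radius simplifies to $R_{99}' = 2f(3-2\sqrt{2})$, and the rightmost point of the locus sits at $x=(6\sqrt{2}-7)f+2f(3-2\sqrt{2})=(2\sqrt{2}-1)f$. On the other hand, setting $y=0$ in the equation of $\H$ and using $3-2\sqrt{2}=(\sqrt{2}-1)^2$ places the right vertex of $\H$ at exactly $x=(2\sqrt{2}-1)f$, which by the preceding proposition is the stationary location of $X_{110}'$. Because both the center $O_{99}'$ and the point $X_{110}'$ lie on the axis of $\P$, the contact is a genuine tangency.

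The main obstacle is symbolic bulk rather than conceptual difficulty: the vertex parametrization carries both a trigonometric parameter and the surd $r/f = 2(\sqrt{2}-1)$, producing an intermediate expression that is unwieldy before simplification. A more conceptual route, worth pursuing if CAS simplification becomes awkward, would exploit the fact that the circumcircle of $T'$ passes through two \emph{fixed} points—namely $F = X_{26}'$ and $X_{110}'$—and therefore belongs to a coaxial pencil whose base points lie on the axis of $\P$. Combined with the already-established linear locus of $X_3'$ on the perpendicular bisector of these two base points, one can parametrize the circumcircle by a single scalar and track the position of $X_{99}'$ on it, either directly or via its circumcircle antipode $X_{98}'$, to obtain the circular locus synthetically and read off its tangency to $X_{110}'$ from the pencil structure itself.
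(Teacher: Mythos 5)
Your proposal is correct and follows essentially the same route as the paper, which explicitly omits its CAS-based proofs: plugging the polar-vertex parametrization from the appendix into the barycentric formula $\left(\tfrac{1}{b^2-c^2}:\tfrac{1}{c^2-a^2}:\tfrac{1}{a^2-b^2}\right)$ for $X_{99}$ and verifying the circle equation as a polynomial identity is exactly the intended argument. Your hand verification of the contact with $X_{110}'$ is also right: $R_{99}'=2f(3-2\sqrt{2})$, so the rightmost point of the circle is at $x=(2\sqrt{2}-1)f$, which matches the right vertex of $\H$ obtained from setting $y=0$ in its equation, i.e.\ the stationary position of $X_{110}'$.
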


\noindent Referring to \cref{fig:polar-locus-pencil}:

\begin{observation}
Over the first 1000 triangle centers in \cite{etc}, the following triangle centers of $T'$ sweep circular loci with centers on the axis of $\P$ and passing through $X_{110}'$: $X_k', k=$99, 107, 112, 249, 476, 691, 827, 907, 925, 930, 933, 935.
\end{observation}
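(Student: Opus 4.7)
The plan is a CAS-based verification that extends the treatment of $X_{99}'$ given in the preceding proposition to each of the twelve listed centers. Starting from the rational parametrization $A(t), B(t), C(t)$ of the polar triangle vertices in \cref{app:vtx}, I would compute the three side lengths $a(t), b(t), c(t)$ and, for each index $k$, form the triangle-center expression
\[
X_k'(t) \;=\; \frac{a(t)\,\alpha_k\,A(t) + b(t)\,\beta_k\,B(t) + c(t)\,\gamma_k\,C(t)}{a(t)\,\alpha_k + b(t)\,\beta_k + c(t)\,\gamma_k},
\]
where $\alpha_k : \beta_k : \gamma_k$ are the trilinear coordinates listed in \cite{etc}. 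After radical normalization this yields an explicit rational parametrization $(x_k(t), y_k(t))$ for the locus of $X_k'$.

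For each $k$ the verification splits into three parts. First, I would guess a candidate circle equation $(x-h_k)^2 + y^2 = R_k^2$ by interpolating from three sample values of $t$, and then check symbolically that $x_k(t)^2 + y_k(t)^2 - 2 h_k x_k(t) + h_k^2 - R_k^2$ vanishes identically in $t$; equivalently, one eliminates $t$ from the pair $(x - x_k(t), y - y_k(t))$ by a resultant and confirms the output is a quadratic of circular form. Second, the center must lie on the axis of $\P$; this is forced by the $t \mapsto -t$ reflection symmetry of the parametrization, under which $x_k$ is invariant and $y_k$ flips sign (any common center of a reflection-symmetric locus lies on the axis). Third, passage through the stationary point $X_{110}'$, which by the earlier proposition on stationary points coincides with the right vertex of $\H$, is checked by a single substitution of its known coordinates into the circle equation.

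The principal obstacle is computational rather than conceptual. For several of the higher-indexed entries (for instance $X_{691}, X_{827}, X_{933}, X_{935}$) the trilinears are dense symmetric polynomials in $(a, b, c)$, and after substitution of the radical side lengths the expressions for $x_k(t)$ and $y_k(t)$ acquire nested square roots in both $t$ and $\sqrt{2}$ that must be carefully rationalized before the resultant step terminates in reasonable time. Because the statement is a sweep over twelve entries (itself a sample from a potentially longer list in the ETC), a single automated routine iterating over the list is strongly preferable to twelve hand computations. A more satisfying synthetic proof would need to identify a common mechanism forcing these specific centers into a shared pencil through $X_{110}'$; since the twelve entries span rather different ETC classifications (Steiner-type, Prasolov-type, Kiepert-type, and various isogonal conjugates of infinite-direction points), such an explanation is not apparent to me, which is presumably why the authors record the statement as an \emph{observation} rather than a proposition.
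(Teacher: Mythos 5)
The paper offers no proof of this statement---it is recorded as an \emph{observation} supported by simulation and CAS checks over the ETC list, which is precisely the per-center symbolic verification you outline, carried out with the parametrizations of the appendix. Your plan is sound and matches the paper's stated (but omitted) methodology, so there is nothing substantive to compare against.
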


This gives credence to:

\begin{conjecture}
If the locus of $X_k'$ is a circle with nonzero radius, it is in the parabolic pencil with $X_{110}$ as a common point. 
\end{conjecture}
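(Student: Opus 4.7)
My plan is to split the conjecture into two independent claims: (a) the center of the putative circle lies on the axis of $\P$, and (b) the circle passes through $X_{110}'$. Together these give membership in the parabolic pencil based at $X_{110}'$ whose common tangent is perpendicular to the axis.

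Claim (a) I would obtain from symmetry. Both $\P$ and the focus-centered caustic are invariant under the reflection $\sigma$ across the axis of $\P$, so the polar family is $\sigma$-invariant: if $T'$ belongs to $\T'$ then so does $\sigma T'$. Since every Kimberling center is reflection-equivariant, $X_k(\sigma T') = \sigma(X_k(T'))$, the locus of $X_k'$ is $\sigma$-symmetric, and any circle invariant under an axial reflection has its center on that axis.

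Claim (b) is where the real work lies. By Poncelet closure the parameter space of the family is topologically a circle, and $\sigma$ acts on it as an orientation-reversing involution, hence with exactly two fixed configurations, two $\sigma$-symmetric triangles $T_L',T_R'\in\T'$. For each of them $X_k'$ lies on the axis, so a circular locus meets the axis precisely at $X_k'(T_L')$ and $X_k'(T_R')$, the left and right extremes of the circle. The conjecture is therefore equivalent to the assertion that, whenever the locus is a bona fide circle, one of these two extremes coincides with $X_{110}'$. For each specific center listed in the observation preceding the conjecture, I would verify this by direct computation, using the vertex parametrizations of \cref{app:vtx} to write down $T_L'$ and $T_R'$ explicitly, plug them into the trilinear definition of $X_k$, and check that the resulting axial point is $X_{110}'$.

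The hard part is upgrading this enumerative check to a general statement. Without additional structure, there is no intrinsic reason the isoceles values $X_k'(T_L')$ or $X_k'(T_R')$ should prefer the specific point $X_{110}'$ over any other point of the axis. The most promising conceptual route I see is an inversive one: invert the plane about $X_{110}'$. Circles through $X_{110}'$ then become straight lines, and the parabolic pencil in question becomes a family of lines perpendicular to the axis of $\P$ --- the same direction as the linear loci of \cref{fig:polar-barcode}. If the inverted polar family can be recognized as a configuration in which every center locus is automatically a line perpendicular to that axis, then pulling back recovers the conjecture. Identifying the correct geometric description of the inverted family, and showing that the linear-locus property is preserved across every Kimberling index (rather than merely the finite sample already enumerated), is where I expect the main obstacle to lie.
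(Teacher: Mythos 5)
The statement you are trying to prove is left as a \emph{conjecture} in the paper: the authors offer no proof, only the experimental observation (over the first 1000 Kimberling centers) that every circular locus found passes through $X_{110}'$. Your proposal does make genuine partial progress. Part (a) is sound: the whole configuration is invariant under reflection $\sigma$ in the axis of $\P$, Kimberling centers are equivariant under similarities (hence under $\sigma$), so any circular locus is $\sigma$-symmetric and has its center on the axis. Combined with the fact that $X_{110}'$ is itself a stationary point on the axis (the right vertex of $\H$), passing through $X_{110}'$ with center on the axis does force tangency there to the line perpendicular to the axis, i.e.\ membership in the stated parabolic pencil. Your reduction of part (b) to evaluating $X_k'$ at the two $\sigma$-fixed configurations of the Poncelet parameter circle is also a correct and useful reformulation.

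However, the proof is not complete, and you say so yourself: nothing in the proposal explains why one of the two axial extreme values $X_k'(T_L')$, $X_k'(T_R')$ should equal $X_{110}'$ for \emph{every} $k$ whose locus is a circle, as opposed to the finitely many $k$ one can check by CAS. The inversive reformulation (invert about $X_{110}'$ so that the pencil becomes the family of lines perpendicular to the axis) is an attractive idea, but you do not identify the inverted family or prove that its center loci are lines in that direction; note also that inversion does not commute with taking triangle centers, so the reformulation does not by itself transport the linear-locus phenomenon of the barcode figure to the inverted picture. This missing step is the entire content of the conjecture, which remains open in the paper as well; what you have is a clean reduction plus a finite verification scheme, not a proof.
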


\begin{figure}
    \centering
    \includegraphics[trim=50 20 100 100,clip,width=\textwidth,frame]{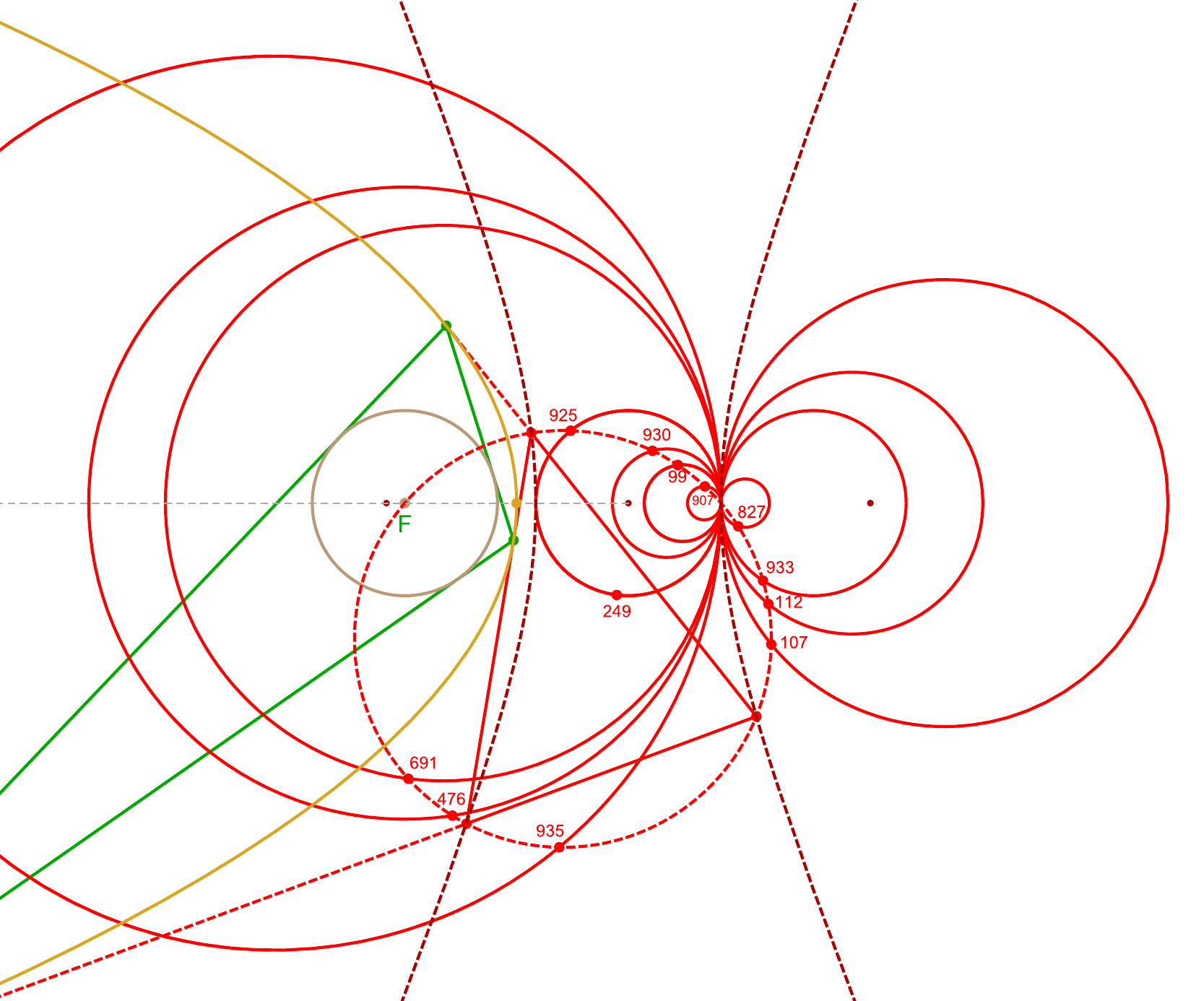}
    \caption{Over the polar family we find that if a certain triangle center sweeps a circular locus, said locus will be an element of a parabolic pencil with $X_{110}$ as their common point (not labeled). In the figure the circular loci of $X_k$, $k=$99, 107, 112, 249, 476, 691, 827, 907, 925, 930, 933, 935 are shown. Notice all lie on the dynamically-moving circumcircle (dashed red) except for $X_{249}$.}
    \label{fig:polar-locus-pencil}
\end{figure}

\section{Parabola-inscribed quadrilaterals}
\label{sec:n4}
Referring to \cref{fig:n=4}, consider a Poncelet family $\Q$ of quadrilaterals inscribed in a parabola $\P$, and circumscribed about a focus-centered circle $\C$ of radius $r$. As before, let $f$ denote the parabola's focal distance, and $V=[0,0]$, $F=[-f,0]$, its vertex and focus, respectively.

\begin{figure}
    \centering
    \includegraphics[trim=50 50 0 150,clip,width=\textwidth,frame]{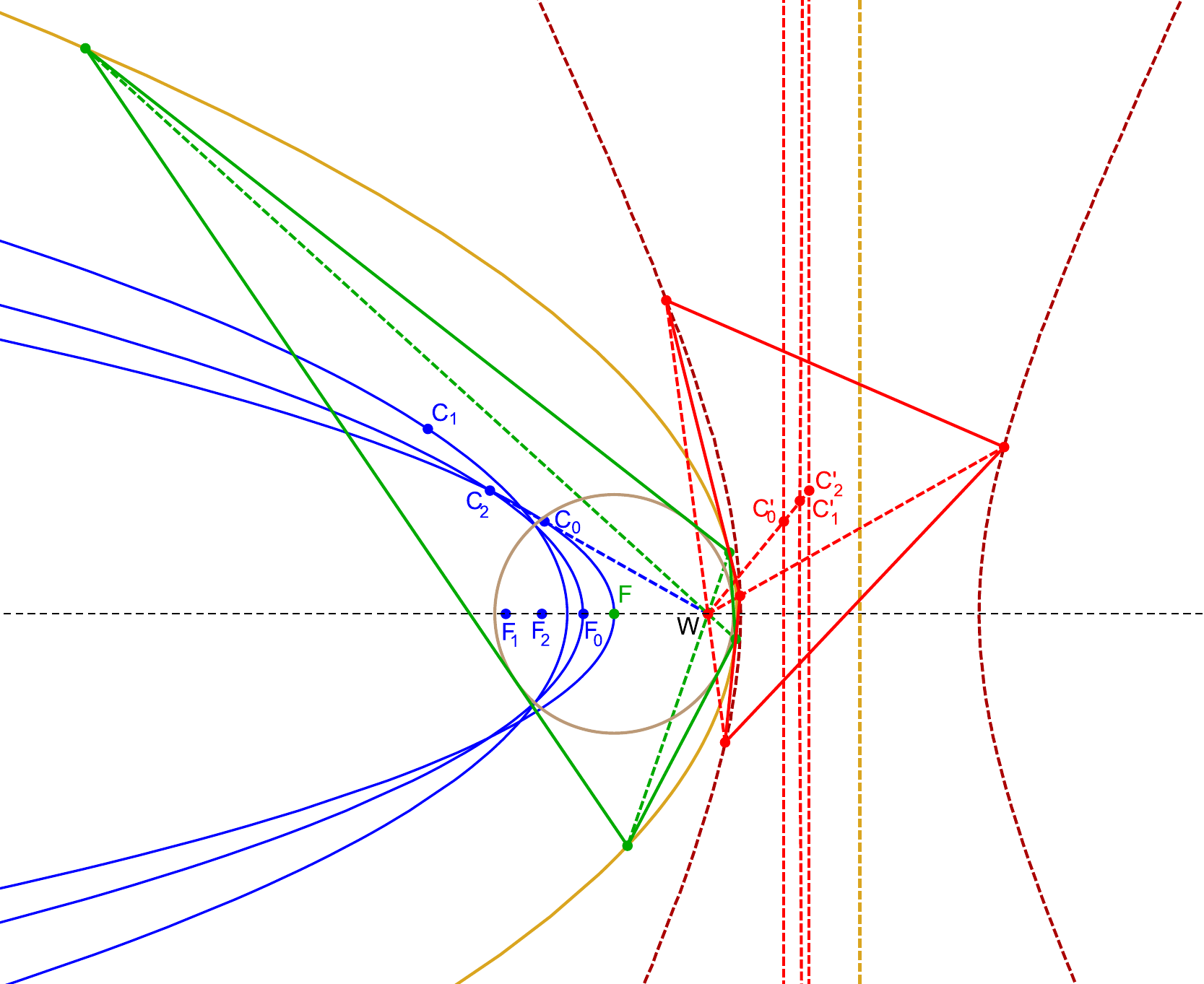}
    \caption{A Poncelet quadrilateral (green) is shown inscribed in a parabola $\P$ (gold) and circumscribed about a focus-centered circle (brown). Over the family, (i) the intersection $W$ of its diagonals (dashed green) is stationary; (ii) the loci of vertex $C_0$, perimeter $C_1$, and area $C_2$, centroids sweep 3 distinct parabolas (blue) coaxial with $\P$ with foci on $F_0$, $F_1$ and $F_2$. Notice the vertex of $C_0$ is $F$ and that of $C_1$ is $F_0$. (iii) $C_0,C_2,W$ are collinear (dashed blue). Also shown is the polar quadrilateral $Q'$ (red) with respect to $\P$, inscribed in a hyperbola (dashed, red) centered at $[f,0]$. One observes that: (i) its diagonals (dashed red) also intersect at $W$; (ii) the loci of its vertex $C_0'$ and area $C_2'$ centroids are lines (dashed orange) perpendicular to the axis of $\P$, (iii) $C_0',C_2',W$ are collinear (dashed red); (iv) the locus of the polar perimeter centroid $\C_1'$ is algebraic and of degree 10.}
    \label{fig:n=4}
\end{figure}

\begin{proposition}
$\P$ and $\C$ will admit a Poncelet family of convex quadrilaterals iff  $r/f=2\sqrt{\sqrt{5}-2}$.
\label{prop:n=4}
\end{proposition}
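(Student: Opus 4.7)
The plan is to emulate the strategy used in the proof of \cref{prop:n=3}: by Poncelet's closure theorem, it suffices to exhibit a single quadrilateral inscribed in $\P$ and circumscribing $\C$, and to read off the resulting ratio $r/f$ from its closure constraints. I will exploit the bilateral symmetry of the configuration through the axis of $\P$ and look for a Poncelet quadrilateral in the form of an isosceles trapezoid whose two parallel sides are perpendicular to the axis.

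Working in the coordinates where $\P:\,y=x^2/(4f)$ with focus $F=(0,f)$ (the same setup used in \cref{prop:n=3}, up to reflection), tangency of a horizontal chord to the incircle centered at $F$ with radius $r$ forces that chord to lie at height $f\pm r$. Hence the four vertices must be
$$P_{1,2} = (\mp u,\, f+r), \qquad P_{3,4} = (\pm v,\, f-r),$$
where inscription in $\P$ gives $u = 2\sqrt{f(f+r)}$ and $v = 2\sqrt{f(f-r)}$. The existence of $P_{3,4}$ requires $r<f$, which will be verified a posteriori from the numerical value $r/f\approx 0.972$.

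By the reflective symmetry, the only remaining closure condition is that one slanted side, say $P_2P_3$, be tangent to $\C$. A direct distance computation from $F=(0,f)$ to the line through $(u,f+r)$ and $(v,f-r)$ simplifies, after squaring, to the clean relation $uv = r^2$. Substituting the explicit expressions for $u,v$ yields $4f\sqrt{f^2-r^2}=r^2$, i.e.
$$r^4 + 16\,f^2 r^2 - 16\,f^4 = 0.$$
Viewing this as a quadratic in $(r/f)^2$ and selecting the positive root gives $(r/f)^2 = 4(\sqrt{5}-2)$, whence $r/f = 2\sqrt{\sqrt{5}-2}$, as claimed.

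The only nontrivial step is the algebraic simplification leading to the identity $uv=r^2$; I expect no genuine obstacle, merely careful bookkeeping of signs and selection of the physically meaningful root of the resulting biquadratic.
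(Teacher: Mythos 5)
Your proof is correct and follows essentially the same route as the paper: both exhibit the axially symmetric trapezoid with horizontal sides at heights $f\pm r$ and impose tangency of a slanted side, arriving at $4f\sqrt{f^2-r^2}=r^2$ and hence the biquadratic $r^4+16f^2r^2-16f^4=0$ (the paper writes this relation with a sign slip as $r^2+4f\sqrt{f^2-r^2}=0$, which as printed has no positive root). Your intermediate identity $uv=r^2$ is a pleasant simplification, but the argument is the same.
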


\begin{proof}
Referring to \cref{fig:n=4-proof}, consider the symmetric Poncelet quadrilateral $P_i=[x_i,y_i]$, $i=1,\ldots,4$, inscribed in the parabola $y=x^2/(4f)$, i.e., $x=2\sqrt{f y}$. Clearly, $y_1=f-r$, and $y_2=f+r$. Requiring that $P_1 P_2$ be tangent to $\C$ yields the quartic $r^2 + 4 f\sqrt{f^2 - r^2}=0$. The claim is the one positive root of this quartic.
\end{proof}

Note: more generally, Cayley's conditions may be used to include the non-convex case, see \cite{dragovic11}.

\begin{figure}
    \centering
    \includegraphics[width=.5\textwidth,frame]{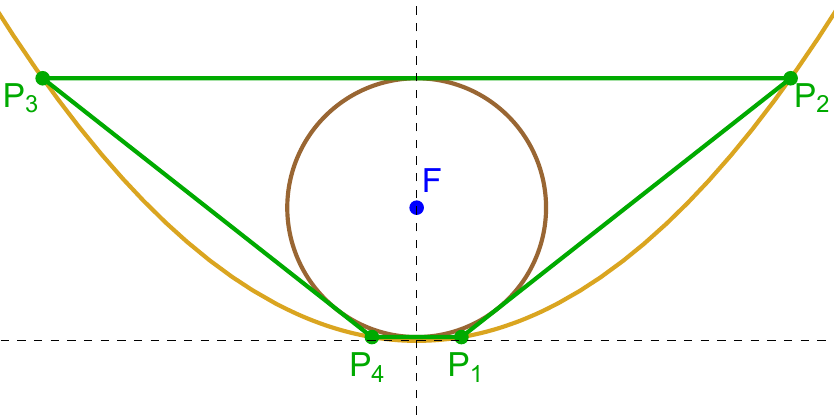}
    \caption{Construction used to derive $r/f$ in for parabola-inscribed convex quadrilaterals in \cref{prop:n=4}.}
    \label{fig:n=4-proof}
\end{figure}

The next 3 propositions, first identified experimentally, were then confirmed via CAS.

\begin{proposition}
Over $\Q$, the two major diagonals $P_1 P_3$ and $P_2 P_4$ intersect at a stationary point $W=[(2 - \sqrt{5})f, 0]$.
\label{prop:n4w}
\end{proposition}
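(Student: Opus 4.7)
The plan is to work in the rotated coordinate system already used in the proof of \cref{prop:n=4}---parabola $y=x^2/(4f)$, focus $F=(0,f)$---parametrize the vertices as $P_i=(2ft_i,ft_i^2)$, and then exploit Vieta's formulas on the tangency condition to pin down two conserved products of the $t_i$'s. Writing $k=r^2/(4f^2)$, \cref{prop:n=4} reads $k=\sqrt{5}-2$, equivalently $k^2+4k-1=0$, i.e.\ $1-4k=k^2$.

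First, I would derive the tangency condition. The chord $P_iP_{i+1}$ has equation $(t_i+t_{i+1})x-2y-2ft_it_{i+1}=0$, and requiring its distance from the focus to equal $r$ yields $(1+t_it_{i+1})^2 = k((t_i+t_{i+1})^2+4)$. With $t_1$ held fixed, this is a quadratic in the second endpoint,
\[
(t_1^2-k)\,t^2 + 2(1-k)t_1\,t + (1-4k-kt_1^2) = 0,
\]
whose two roots are precisely $t_2$ and $t_4$, the parameters of the two vertices of the quadrilateral adjacent to $P_1$ (one root per tangent from $P_1$ to $\C$). Vieta then gives $t_2 t_4 = (1-4k-kt_1^2)/(t_1^2-k)$.

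Next, the key algebraic step: substitute the closure relation $1-4k=k^2$. The numerator becomes $k^2-kt_1^2=-k(t_1^2-k)$, so the ratio collapses to the constant $-k$, independently of $t_1$. The identical argument based at $P_2$ produces $t_1t_3=-k$. Hence $t_1t_3=t_2t_4=-k$ throughout the family, and the two diagonal equations reduce to $(t_1+t_3)x-2y+2fk=0$ and $(t_2+t_4)x-2y+2fk=0$. For a non-degenerate quadrilateral the two sums differ (otherwise $\{t_1,t_3\}$ and $\{t_2,t_4\}$ would be the same pair of roots of a common monic quadratic, forcing vertex coincidences), so subtracting the equations gives $x=0$ and then $y=fk$. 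Finally, under the $90^\circ$ counter-clockwise rotation $(x,y)\mapsto(-y,x)$ that identifies these proof coordinates with the paper's convention ($F=(-f,0)$), the point $(0,fk)$ becomes $(-fk,0)=(f(2-\sqrt{5}),0)$, as claimed.

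The main obstacle I anticipate is recognizing that the collapse $(1-4k-kt_1^2)/(t_1^2-k)\equiv -k$ is exactly equivalent to the $N=4$ closure condition $k^2+4k-1=0$. This is the non-obvious algebraic miracle that turns a rational function of $t_1$ into a constant; once the two Vieta identities are in hand the remainder is mechanical, and one sees that Poncelet closure is used in an essential way---it is precisely what makes the diagonal intersection stationary rather than sweeping a curve.
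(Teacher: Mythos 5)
Your argument is correct, and it checks out against the paper: the paper states \cref{prop:n4w} without proof (it falls under the omitted CAS-based verifications, which would run the explicit $N=4$ vertex parametrization of \cref{app:vtx} through a computer-algebra system), whereas you give a short conceptual derivation. The computations are right: the chord equation, the tangency quadratic $(t_1^2-k)t^2+2(1-k)t_1 t+(1-4k-kt_1^2)=0$, and the observation that the closure condition $k^2+4k-1=0$ is exactly what makes the Vieta product collapse to the constant $-k$ are all verified; note also that your conserved product $t_1t_3=-k$ is visible in the paper's own parametrization, where $y_1y_3=4(2-\sqrt5)f^2$. Your route buys something the CAS proof does not: it isolates the two invariants $t_1t_3=t_2t_4=2-\sqrt5$ as the mechanism behind the stationarity of $W$, and ties them directly to Poncelet closure. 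Two small points worth a sentence each in a polished write-up: (i) the quadratic's roots really are the two Poncelet-adjacent vertices and never include $t_1$ itself, since $t=t_1$ would require $t_1^2+1=4k<1$, which is impossible; (ii) the positions where one vertex is at infinity (i.e.\ $t_1^2=k$, when the leading coefficient vanishes) are degenerate configurations handled by continuity. Neither affects the validity of the argument.
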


\subsection{The three centroids}

Referring to \cref{fig:n=4}, let $C_0$, $C_1$, and $C_2$ denote the vertex, perimeter, and area centroids of the quadrilaterals in $\Q$, respectively.

\begin{proposition}
Over the family, $C_0$, $C_2$, and $W$ are collinear.
\end{proposition}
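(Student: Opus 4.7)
The plan is to prove a stronger statement: for \emph{any} quadrilateral whose diagonals meet at a point $W$ (not just parabola-inscribed Poncelet ones), the vertex centroid $C_0$, area centroid $C_2$, and $W$ are collinear. The Poncelet closure, the parabola, and \cref{prop:n4w} are therefore irrelevant to this particular assertion; only the existence of a diagonal intersection matters.

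First I would parameterize $W$ along each diagonal by writing
\[
W \;=\; (1-\alpha)P_1 + \alpha P_3 \;=\; (1-\beta)P_2 + \beta P_4,
\]
with $\alpha,\beta\in(0,1)$. This yields the vector identities $P_1-W=\alpha(P_1-P_3)$, $P_3-W=-(1-\alpha)(P_1-P_3)$, and analogously for $P_2-W,P_4-W$ in terms of $(P_2-P_4)$. Then a direct addition of the four displacements gives
\[
C_0 - W \;=\; \tfrac{1}{4}\sum_{i=1}^{4}(P_i-W)\;=\;\tfrac{1}{4}\bigl[(2\alpha-1)(P_1-P_3)+(2\beta-1)(P_2-P_4)\bigr].
\]

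Next, for the area centroid I would split the quadrilateral along diagonal $P_1P_3$ into triangles $T_1=P_1P_2P_3$ and $T_2=P_1P_3P_4$, whose centroids are $G_1=(P_1+P_2+P_3)/3$ and $G_2=(P_1+P_3+P_4)/3$. The areas $A_1,A_2$ are proportional to the perpendicular distances from $P_2$ and $P_4$ to the line $P_1P_3$; since $W$ lies on that line and on segment $P_2P_4$ with parameter $\beta$, these distances are in ratio $\beta:(1-\beta)$. Hence $C_2=\beta G_1+(1-\beta)G_2=\tfrac{1}{3}\bigl[P_1+\beta P_2+P_3+(1-\beta)P_4\bigr]$. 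Repeating the same displacement computation as for $C_0$ gives
\[
C_2 - W \;=\; \tfrac{1}{3}\bigl[(2\alpha-1)(P_1-P_3)+(2\beta-1)(P_2-P_4)\bigr].
\]

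Comparing the two displays, one obtains immediately $C_2-W=\tfrac{4}{3}(C_0-W)$, proving collinearity (and in fact that $W$, $C_0$, $C_2$ occur in this order along the line, with $|WC_2|/|WC_0|=4/3$). The main obstacle is the area-ratio step: one must recognize that because $W$ is on line $P_1P_3$ while also splitting $P_2P_4$ in ratio $\beta:(1-\beta)$, the signed-distance equation $(1-\beta)\,d(P_2)+\beta\, d(P_4)=0$ forces the triangle areas to be in ratio $\beta:(1-\beta)$. Once this is in hand, everything collapses to a single shared vector, making the proportionality of $C_0-W$ and $C_2-W$ visible without any coordinate computation specific to the parabola.
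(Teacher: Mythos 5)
Your argument is correct, and it is both more elementary and strictly more general than what the paper does: the paper states this proposition without proof, relying (as announced in the introduction) on CAS verification with the explicit vertex parametrizations of \cref{app:vtx}, which ties the result to the specific parabola-inscribed family. You instead prove the classical fact that for \emph{any} quadrilateral whose diagonals meet at $W$, one has $C_2-W=\tfrac{4}{3}(C_0-W)$, so that $W$, $C_0$, $C_2$ are always collinear in that order; the Poncelet structure, the parabola, and \cref{prop:n4w} enter only to guarantee that $W$ exists and is stationary. All the steps check out: the identities $P_1-W=\alpha(P_1-P_3)$, $P_3-W=-(1-\alpha)(P_1-P_3)$ and their analogues give $C_0-W=\tfrac14[(2\alpha-1)(P_1-P_3)+(2\beta-1)(P_2-P_4)]$; the area-ratio step is sound because the signed distance to line $P_1P_3$ is affine and vanishes at $W=(1-\beta)P_2+\beta P_4$, forcing $A_1:A_2=\beta:(1-\beta)$ (with $P_2,P_4$ on opposite sides for a convex quadrilateral, as these are, being inscribed in order in a convex arc); and the weighted combination $\beta(P_2-W)+(1-\beta)(P_4-W)=(\beta^2-(1-\beta)^2)(P_2-P_4)=(2\beta-1)(P_2-P_4)$ closes the computation. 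The only caveats worth recording are the degenerate configurations of the family (a vertex at infinity, or $C_0=W$, where collinearity is vacuous), which are handled by continuity. What your route buys is a coordinate-free proof, the explicit ratio $|WC_2|/|WC_0|=4/3$, and the observation that the same statement for the polar quadrilaterals (also asserted in the paper) follows from the identical general lemma once one knows their diagonals also meet at $W$.
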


\begin{proposition}
Over the Poncelet family, the loci of $C_0,C_1,C_2$ are parabolas coaxial with $\P$, whose foci and vertices locations are listed in \cref{tab:n4}.
\end{proposition}

\begin{table}
\centering
\begin{tabular}{|c|c|l|l|}
\hline
centroid (N=4) & focal dist. & vertex $x/f$ & vtx. $x/f$ (num). \\
\hline
$C_0$ & $f/4$ & $-1$ & $-1$ \\
$C_1$ & $f/2$ &  
$( \sqrt{5}-5)/2$ & 
{$-1.381966$} \\
$C_2$ & $f/3$ &  $\sqrt{5}/3-2$ & $-1.25464$ \\
\hline
\end{tabular}
\caption{Location of centroids $C_0,C_1,C_2$ in the convex $N=4$ family}.
\label{tab:n4}
\end{table}
 
\subsection{The polar quadrilateral}

Referring to \cref{fig:n=4}, consider the polar quadrilateral whose sides are the tangents to $\P$ at the vertices of the original family. Let $P_i'$, $i=1,\ldots,4$ denote its vertices and $C_0'$, $C_1'$, and $C_2'$ denote its vertex, perimeter, and area centroids.

\begin{proposition}
The locus of the polar quadrilateral's vertices is the hyperbola $\H$ given by
\[ \H:\;  \frac{(x-f)^2}{4(    \sqrt{5}-2)f^2}-\frac{y^2}{4f^2}-1=0.\] 
with center at $[f,0]$ and foci $[f(1\pm 2\sqrt{\sqrt{5}  - 1}) ,0]$.
\end{proposition}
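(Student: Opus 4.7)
The plan is to exploit polar duality with respect to $\P$. Write $Q$ for a member of $\Q$ and $Q'$ for its polar. Each side of $Q$ is tangent to the caustic circle $\C$, hence its pole with respect to $\P$ lies on the polar image of $\C$. But the vertices of $Q'$ are precisely the poles (with respect to $\P$) of the sides of $Q$: indeed, $Q'$ is bounded by the tangents to $\P$ at the vertices of $Q$, so each vertex of $Q'$ is the intersection of two consecutive tangents, which is the pole of the chord joining their points of tangency. Hence the vertices of $Q'$ trace the polar image of $\C$ with respect to $\P$, and it suffices to compute this conic explicitly.

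Taking $\P: y^2 = -4fx$ so that $F=(-f,0)$, the polar of $(x_0, y_0)$ with respect to $\P$ is $y y_0 + 2f(x+x_0) = 0$. Parametrizing $\C$ by $(x_0, y_0) = (-f + r\cos\theta,\, r\sin\theta)$, the family of polar lines becomes
\[ 2f(x-f) + r(y\sin\theta + 2f\cos\theta) = 0. \]
To obtain the envelope I would differentiate in $\theta$, giving $y\cos\theta = 2f\sin\theta$, then substitute $\sin\theta = y/\sqrt{y^2+4f^2}$ and $\cos\theta = 2f/\sqrt{y^2+4f^2}$ back, and square to clear the sign ambiguity. The outcome is
\[ 4f^2(x-f)^2 - r^2 y^2 = 4f^2 r^2, \]
a hyperbola centered at $(f,0)$ with transverse semi-axis $r$ along the axis of $\P$ and conjugate semi-axis $2f$. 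Substituting $r^2 = 4f^2(\sqrt{5}-2)$ from \cref{prop:n=4} recovers the displayed equation for $\H$.

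For the foci, $c^2 = a^2 + b^2 = 4f^2(\sqrt{5}-2) + 4f^2 = 4f^2(\sqrt{5}-1)$, so $c = 2f\sqrt{\sqrt{5}-1}$, and the foci are $[f(1 \pm 2\sqrt{\sqrt{5}-1}), 0]$, matching the statement. The only nontrivial step is the envelope calculation, and it is routine; no real obstacle arises. The conceptual content is entirely in the one-line polar-duality observation that the map sending "tangent to $\C$" to "pole with respect to $\P$" carries $\C$ to its polar conic, an observation which, notably, depends only on $\C$ being a circle centered at $F$ and not on the specific value of $r$ — the closure condition enters only at the last substitution step.
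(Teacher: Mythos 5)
Your argument is correct, and it supplies an actual proof where the paper offers none: this proposition is one of those the authors state without proof, relying on CAS verification with the explicit vertex parametrizations of \cref{app:vtx}. Your route is the conceptually right one. The key observation---that each vertex of $Q'$ is the intersection of two consecutive tangents to $\P$, hence the pole of the corresponding chord of $Q$, and that this chord is tangent to $\C$, so its pole lies on the dual conic of $\C$ under the polarity of $\P$---reduces everything to computing the polar image of a focus-centered circle. Your envelope computation checks out: with $\P: y^2=-4fx$ the polar of $(-f+r\cos\theta, r\sin\theta)$ is $2f(x-f)+r(y\sin\theta+2f\cos\theta)=0$, differentiation gives $\tan\theta = y/(2f)$, and elimination yields $4f^2(x-f)^2 - r^2y^2 = 4f^2r^2$, i.e.\ $(x-f)^2/r^2 - y^2/(4f^2)=1$; substituting $r^2=4(\sqrt5-2)f^2$ from \cref{prop:n=4} gives the stated $\H$, and $c^2=a^2+b^2=4(\sqrt5-1)f^2$ gives the stated foci. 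One small point worth making explicit if you write this up: the locus of poles of the tangent lines of $\C$ and the envelope of the polar lines of the points of $\C$ are the same conic (the dual of $\C$ under the polarity), which is the standard incidence-preserving property of polarities; you implicitly pass between these two descriptions. Your closing remark that the hyperbola's shape depends on $r$ only through the transverse semi-axis, with the Poncelet closure condition entering only at the final substitution, is a genuine added insight not present in the paper.
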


Let $W$ be defined as in \cref{prop:n4w}. The next two propositions result from visual (and numerical) detection, followed by verification by CAS.

\begin{proposition}
The two diagonals of the polar quadrilateral intersect at $W$.
\end{proposition}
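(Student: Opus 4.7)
The plan is to prove this via pure pole–polar duality with respect to the parabola $\P$, so no coordinates or closure-specific computation will be needed.

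First I would fix notation: with the original quadrilateral labelled $P_1P_2P_3P_4$, its sides are $s_i = P_i P_{i+1}$ (indices mod $4$), and the polar quadrilateral has vertex $P_i'$ equal to the pole of $s_i$ with respect to $\P$ (this is exactly the tangent-line intersection construction used to define $Q'$). Then I would invoke the standard pole–polar fact that the line joining the poles of two lines is the polar of their intersection. Applied here, the diagonal $P_1'P_3'$ is the polar of the point $M := P_1P_2 \cap P_3P_4$, and the diagonal $P_2'P_4'$ is the polar of the point $N := P_2P_3 \cap P_4P_1$. Hence the intersection of the two polar diagonals is, by duality, the pole of the line $MN$.

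The second ingredient is the classical self-polar diagonal triangle theorem: whenever four points lie on a conic, the diagonal triangle of the associated complete quadrangle is self-polar with respect to that conic. For $P_1P_2P_3P_4$ inscribed in $\P$, the three vertices of the diagonal triangle are precisely $M$, $N$, and $W := P_1P_3 \cap P_2P_4$. Self-polarity gives, in particular, that the polar of $W$ is the line $MN$, and equivalently that the pole of $MN$ is $W$. Combined with the first paragraph, this identifies the intersection of the polar diagonals $P_1'P_3'$ and $P_2'P_4'$ as $W$ itself, yielding the claim.

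The only real delicacy is making sure the indexing of the polar quadrilateral's ``major diagonals'' matches the opposite-vertex pairing $(P_1',P_3'), (P_2',P_4')$; once that is fixed, both steps above are one-line invocations of projective facts, so I do not anticipate any computational obstacle. As a bonus, this proof does not use the parabola-inscribed closure condition from \cref{prop:n=4} at all — it would apply to any quadrilateral inscribed in any conic — which explains why the stationary point $W$ from \cref{prop:n4w} reappears here verbatim.
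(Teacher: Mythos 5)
Your proof is correct, and it takes a genuinely different route from the paper, which (like most of its propositions) relies on a CAS verification using the explicit vertex parametrizations of \cref{app:vtx}. Both of your ingredients check out: since $P_i'$ is the intersection of the tangents at $P_i$ and $P_{i+1}$, it is indeed the pole of the side $P_iP_{i+1}$, so the polar diagonal $P_1'P_3'$ is the polar of $M=P_1P_2\cap P_3P_4$ and $P_2'P_4'$ is the polar of $N=P_2P_3\cap P_4P_1$; and the self-polarity of the diagonal triangle of a conic-inscribed complete quadrangle then forces the pole of $MN$ to be $W=P_1P_3\cap P_2P_4$. Your indexing of the opposite-vertex pairs is consistent with the cyclic labelling of the tangent intersections, and the argument survives the degenerate positions in the family (e.g.\ when opposite sides are parallel and $M$ or $N$ is at infinity), since everything is projective. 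What your approach buys is substantial: it shows the coincidence of the two diagonal intersection points holds for \emph{any} quadrilateral inscribed in \emph{any} conic, independently of the Poncelet closure condition of \cref{prop:n=4}, and it makes transparent why the point appearing here is literally the same $W$ as in \cref{prop:n4w} (whose stationarity is the only place the closure condition actually enters). The CAS route, by contrast, verifies the coordinates $[(2-\sqrt{5})f,0]$ directly but offers no structural explanation. The only thing worth adding for completeness is one sentence recording the standard facts you invoke (pole of a chord equals intersection of tangents at its endpoints; the diagonal triangle of an inscribed quadrangle is self-polar), with a reference such as Coxeter's \emph{Projective Geometry} or Akopyan--Zaslavsky, since the paper's own toolkit is otherwise computational.
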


\begin{proposition}
Over the polar quadrilateral family, $C_0'$, $C_2'$, and $W$ are collinear.
\end{proposition}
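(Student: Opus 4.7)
The plan is to recognize that this statement is a purely affine fact about convex quadrilaterals, independent of the polar construction, and hence reduces at once to the preceding proposition. Specifically, I would establish the following general lemma: for any convex quadrilateral whose two diagonals meet at a point $W$, the vertex centroid, the area centroid, and $W$ are always collinear. Applied to each polar quadrilateral $Q'$---which by the previous proposition has diagonals meeting at the same stationary point $W=[(2-\sqrt 5)f,0]$---the lemma gives immediately the collinearity of $C_0'$, $C_2'$, and $W$.

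To prove the lemma, I would work in an affine frame adapted to the diagonals. Place $W$ at the origin with line $AC$ along the $x$-axis, so that
\[
A=(-a,0),\quad C=(c,0),\quad B=(b_1,b_2),\quad D=(-k b_1,-k b_2),
\]
with $a,c,k>0$ (using that $W$ lies strictly between $B$ and $D$ on line $BD$). Triangulating along diagonal $AC$ produces triangles $ABC$ and $ACD$ whose areas are in ratio $1:k$ (both proportional to $b_2(a+c)$), so
\[
C_2'=\frac{G_{ABC}+k\,G_{ACD}}{1+k}=\tfrac{1}{3}\bigl(c-a+(1-k)b_1,\,(1-k)b_2\bigr),
\]
while a direct computation gives
\[
C_0'=\tfrac{1}{4}\bigl(c-a+(1-k)b_1,\,(1-k)b_2\bigr).
\]
Both $C_0'$ and $C_2'$ are scalar multiples of a common vector emanating from $W$, so $W$, $C_0'$, $C_2'$ are collinear; in fact $C_0'$ divides the segment from $W$ to $C_2'$ in ratio $3:1$, a bonus that one might choose to record as a remark.

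The only potential subtlety---essentially the sole step not handled by the affine frame---is confirming convexity of the polar quadrilateral $Q'$, so that $W$ really is the interior intersection of its two diagonals and the parametrization above applies with $k>0$. This is immediate from the fact that the four tangents to $\P$ at the cyclically ordered vertices of the original convex Poncelet quadrilateral always bound a convex tangential quadrilateral circumscribing $\P$; one could alternatively invoke continuity from the symmetric configuration used to establish the closure condition in \cref{prop:n=4}.
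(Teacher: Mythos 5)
Your affine lemma is the right idea and your algebra is correct, but the step you yourself flag as ``the only potential subtlety'' is precisely where the argument breaks: the polar quadrilateral is \emph{not} convex, and $W$ is \emph{not} the interior intersection of its two diagonal segments. Four tangent lines to a parabola can never bound a convex quadrilateral circumscribing it (the parabola is unbounded), and concretely, in the symmetric configuration used for \cref{prop:n=4} (take $y=x^2/(4f)$, $f=1$, tangency parameters $t\approx\pm 0.337,\ \pm 2.808$) the poles of the four sides are approximately $(\pm 1.572,\,0.236)$, $(0,-1.971)$, $(0,-0.028)$; the last lies inside the triangle of the other three, so $Q'$ is a dart with a reflex vertex. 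This matches the paper's own observation that the polar polygon sits on a hyperbola with all but one vertex on a single branch: here three poles lie on one branch and one (a branch vertex) on the other. In this configuration $W\approx(0,0.236)$ lies on the diagonal segment joining $(\pm 1.572,\,0.236)$ but only on the \emph{extension} of the other diagonal, so your parameter $k$ is negative, contradicting both $k>0$ and the convexity you invoke. The proposed continuity argument from the symmetric configuration cannot rescue this, since that configuration is itself non-convex.

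Fortunately the lemma survives with little change: your formulas for $C_0'$ and $C_2'$ hold for every $k\notin\{0,-1\}$ provided the area centroid is computed via \emph{signed} areas, since the signed areas of $ABC$ and $ACD$ are $-\tfrac12 b_2(a+c)$ and $-\tfrac12 k b_2(a+c)$ for any sign of $k$, and the weighted-centroid identity with signed weights still gives $C_2'=\tfrac13\bigl(c-a+(1-k)b_1,\,(1-k)b_2\bigr)$. The correct statement is therefore: for any nondegenerate quadrilateral whose diagonal lines meet at a finite point $W$ and whose total signed area is nonzero, $W$, the vertex centroid, and the signed-area centroid are collinear (with the vertex centroid three quarters of the way from $W$ to the area centroid). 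You should restate the lemma in that form, note that for the simple dart-shaped $Q'$ the signed-area centroid coincides with the ordinary one, and check the nondegeneracy hypotheses across the family (e.g., the isolated positions where a vertex of the original quadrilateral escapes to infinity along $\P$). For what it is worth, the paper gives no proof of this proposition at all---it is among the CAS-verified claims---so a corrected version of your synthetic argument would be a genuine contribution rather than a rederivation.
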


\begin{proposition}
Over $\Q$, the loci of $C_0'$ and $C_2'$ are lines parallel to the parabola's directrix and given by $C_0':\; x= (3-\sqrt{5} )f/2$, and $C_2':\; x=  (4-\sqrt{5} )f/3$.

\end{proposition}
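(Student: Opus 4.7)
The plan is to use an explicit rational parametrization of the parabola and reduce both claims to algebraic identities forced by the Poncelet closure.

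I would parametrize $\P\colon x=-y^2/(4f)$ by $P(t)=(-ft^2,\,2ft)$. The tangent at $P(t)$ is $x+ty-ft^2=0$ and the intersection of the tangents at $P(t_i)$ and $P(t_j)$ is the simple point $Q_{ij}=(-f\,t_it_j,\,f(t_i+t_j))$, so the polar vertices are $Q_{12},Q_{23},Q_{34},Q_{41}$. The chord $P(t_i)P(t_j)$ has equation $2x+(t_i+t_j)y-2ft_it_j=0$ and is tangent to the focus-centered circle of radius $r$ iff $(1+t_it_j)^2=\rho^2(4+(t_i+t_j)^2)$, where $\rho:=r/(2f)$. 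By \cref{prop:n=4}, $\rho^2=\sqrt{5}-2$, or equivalently $\rho^4+4\rho^2-1=0$.

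The key step is extracting the Poncelet invariants $t_1t_3=t_2t_4=-\rho^2$. For fixed $t_2$, the neighbors $t_1,t_3$ are the two roots of the tangency condition viewed as a quadratic in $t$; the Vieta expressions for $t_2+t_4$ obtained starting from $t_1$ and from $t_3$ must agree, and cross-multiplying forces $(t_1-t_3)(t_1t_3+\rho^2)=0$, hence $t_1t_3=-\rho^2$ (and symmetrically $t_2t_4=-\rho^2$). Direct averaging gives $C_0'=\bigl(-\tfrac{f}{4}(t_1+t_3)(t_2+t_4),\,\tfrac{f}{2}\sum_i t_i\bigr)$. Writing $u=t_1+t_3$, $v=t_2+t_4$, and substituting $t_3=-\rho^2/t_1$, $t_4=-\rho^2/t_2$, the product $uv$ reduces to a rational function of $P=t_1t_2$ and $S=t_1+t_2$; plugging in the side tangency $(1+P)^2=\rho^2(4+S^2)$ and using $\rho^4+4\rho^2-1=0$ collapses it to $uv=2(\rho^2-1)$, whence $C_0'_x=\tfrac{f(1-\rho^2)}{2}=\tfrac{f(3-\sqrt5)}{2}$.

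For the area centroid I would apply the shoelace formulas to the polar vertices. Each summand simplifies to $x_iy_{i+1}-x_{i+1}y_i=f^2t_{i+1}^2(t_{i+2}-t_i)$, and the invariant $t_jt_{j+2}=-\rho^2$ together with $t_j^3-t_k^3=(t_j-t_k)\bigl((t_j+t_k)^2+\rho^2\bigr)$ (valid when $t_jt_k=-\rho^2$) produce the common factor $(t_1-t_3)(t_2-t_4)(u-v)$ in both $2A$ and $6A\bar x$. The quotient then yields $C_2'_x=\tfrac{f(\rho^2-uv)}{3}=\tfrac{f(2-\rho^2)}{3}=\tfrac{f(4-\sqrt5)}{3}$; each locus is a genuine line rather than a point since $C_0'_y=\tfrac{f}{2}\sum_i t_i$ manifestly varies over the family. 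The main obstacle is the shoelace computation for $C_2'$: the raw sums are rather heavy polynomials in the $t_i$, and only the combination of the Poncelet invariant $t_jt_{j+2}=-\rho^2$ and the closure identity $\rho^4+4\rho^2-1=0$ produces the cancellations that make $\bar x$ collapse to a function of the already-computed constant $uv$.
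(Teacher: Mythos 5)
Your proposal is correct, and I verified the key identities independently: with $\rho^2=\sqrt5-2$ the tangency condition in $t$ for fixed $t_2$ is $(t_2^2-\rho^2)t^2+2t_2(1-\rho^2)t+(1-4\rho^2-\rho^2t_2^2)=0$, whose product of roots equals $-\rho^2$ precisely because $\rho^4+4\rho^2-1=0$ (an even more direct route to your invariant $t_it_{i+2}=-\rho^2$ than the sum-consistency argument, though both work); the reduction $uv=2(\rho^2-1)$ goes through exactly as you describe; and the shoelace sums do factor as $2A=f^2(t_3-t_1)(t_2-t_4)(v-u)$ and $6A\bar x=-f^3(t_3-t_1)(t_2-t_4)(v-u)(uv-\rho^2)$, giving $\bar x=f(\rho^2-uv)/3=f(4-\sqrt5)f/3\cdot f^{-1}\cdot f$ — that is, $(4-\sqrt5)f/3$ — as claimed. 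The comparison with the paper is necessarily one-sided: the authors state that all such results were obtained and verified by CAS and that the proofs are omitted, relying only on the explicit (and rather heavy) vertex parametrizations of \cref{app:vtx}. Your argument is therefore a genuine contribution of a different kind: a closed-form symbolic proof organized around the invariant $t_1t_3=t_2t_4=-\rho^2$, which is exactly the parametric expression of the paper's separate proposition that the diagonals meet at the stationary point $W=[(2-\sqrt5)f,0]$ (the chord $P_iP_{i+2}$ has equation $2x+(t_i+t_{i+2})y-2ft_it_{i+2}=0$, so passing through $W$ is the statement $t_it_{i+2}=2-\sqrt5=-\rho^2$). This buys transparency — one sees why the $x$-coordinates are constant, namely that both collapse to symmetric functions of the single conserved quantity $uv$ — at the cost of the algebra you already flagged, all of which checks out. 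The only cosmetic caveat is that in the hyperbola-inscribed regime the polar quadrilateral can be non-convex or have a vertex escape to infinity, so ``area centroid'' should be read as the signed shoelace centroid and the identities as generic rational identities in the $t_i$; this matches the paper's implicit convention.
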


Rather laborious CAS manipulation yields:

\begin{proposition}
Over $\Q$, the locus of $C_1'$ is one connected component of an algebraic curve of degree ten, given by the following equation:
{\scriptsize  
\begin{align*}
 C_1':&\;\;  - \left( 1457008\,\sqrt {5}+3257968 \right) {f}{x}^{7}{y}^{2}+ \left( 
122156\,\sqrt {5}+273148 \right) {f}^{2}{x}^{4}{y}^{4}+ \left( 465164
\,\sqrt {5}+1040132 \right) {f}^{2}{x}^{6}{y}^{2}\\
&- \left( 96506\,
\sqrt {5}+215698 \right) {f}^{6}{x}^{2}{y}^{2}
 - \left( 119256\,\sqrt 
{5}+266664 \right) {f}^{3}{x}^{3}{y}^{4}+ \left( 505052\,\sqrt {5}+
1129268 \right) {f}^{5}{x}^{3}{y}^{2}\\
&+ \left( 8564\,\sqrt {5}+19204
 \right) {f}^{7}x{y}^{2}- \left(  881712\,\sqrt {5}+1971568 \right) {x
}^{10} 
 + \left( 43955\,\sqrt {5}+98289 \right) {f}^{4}{x}^{2}{y}^{4}\\
 &+ \left( 24568\,\sqrt {5}+54936 \right) {f}{x}^{5}{y}^{4}-
 \left( 7250\,\sqrt {5}+16210 \right) {f}^{5}x{y}^{4}- \left( 
1274930\,\sqrt {5}+2850838 \right) {f}^{4}{x}^{4}{y}^{2}\\
&+ \left( 
1235568\,\sqrt {5}+2762832 \right) {f}^{3}{x}^{5}{y}^{2}+ \left( 
4457696\,\sqrt {5}+9967712 \right) {f}{x}^{9} 
 - \left( 7787152\,\sqrt {5
}+17412608 \right) {f}^{2}{x}^{8}\\
&+ \left( 5470456\,\sqrt {5}+12232344
 \right) {f}^{3}{x}^{7}- \left(  1690535+755997\,\sqrt {5} \right) {f}
^{4}{x}^{6}- \left( 812098\,\sqrt {5}+1815898 \right) {f}^{5}{x}^{5}\\
&+
 \left( 330322\,\sqrt {5}+738968 \right) {f}^{6}{x}^{4} 
  + \left( 1002+
448\,\sqrt {5} \right) {f}^{6}{y}^{4} -\left( 228\,\sqrt {5}+672
 \right) {f}^{8}{y}^{2}\\
 &- \left(  7300\,\sqrt {5}+16956 \right) {f}^{7}
{x}^{3}+ \left( 2750\,\sqrt {5}+7150 \right) {f}^{9}x- \left(  16145\,
\sqrt {5}+36103 \right) {f}^{8}{x}^{2}\\
&- \left( 84196\,\sqrt {5}+
188268 \right) {x}^{6}{y}^{4}+ \left( 544928\,\sqrt {5}+1218496
 \right) {x}^{8}{y}^{2}-
726\,{f}^{10}=0.
\end{align*}
}
Furthermore, $C_1'$ is bound by the following two lines parallel to the directrix and approximately $f/25$ apart:
$x= \left( 5+\sqrt {2}  - \sqrt {5}\sqrt {2}-\sqrt {5}\right)  f/2
$, and $x=\left( \sqrt {5}\sqrt {2}-\sqrt {5}-2\,\sqrt {2}+3 \right) f/2$.
\end{proposition}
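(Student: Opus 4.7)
The plan is to extend the computer-algebra approach used for the earlier centroid loci. Starting from the rational parametrization $(P_1(t),P_2(t),P_3(t),P_4(t))$ of the parabola-inscribed $N=4$ family provided in \cref{app:vtx}, with $r/f=2\sqrt{\sqrt{5}-2}$ enforced via \cref{prop:n=4}, I write the tangent to $\P$ at each $P_i=(x_i,y_i)$ as $y_i\,y=2f(x+x_i)$ and intersect consecutive tangents to obtain the polar vertices $P_i'(t)$ as rational functions of $t$. This reuses precisely the data already used to establish the preceding propositions on $C_0'$, $C_2'$, and the hyperbolic locus of the $P_i'$.

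The perimeter centroid is
\[
C_1'(t) \;=\; \frac{\sum_{i=1}^4 \ell_i(t)\,M_i(t)}{\sum_{i=1}^4 \ell_i(t)},
\qquad \ell_i=|P_i'P_{i+1}'|,\ \ M_i=\tfrac{1}{2}(P_i'+P_{i+1}').
\]
This map is algebraic but not rational in $t$, since each $\ell_i$ is the square root of a rational function of $t$. To implicitize, introduce auxiliary variables $u_1,\dots,u_4$ subject to the polynomial relations $u_i^2=|P_i'-P_{i+1}'|^2$, and eliminate $(t,u_1,u_2,u_3,u_4)$ from the ideal consisting of these relations together with $\bigl(\sum u_i\bigr)x=\sum u_i x_{M_i}$ and $\bigl(\sum u_i\bigr)y=\sum u_i y_{M_i}$, using Gr\"obner bases or iterated resultants. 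The $\sqrt{5}$ coefficients appearing in the final equation are the algebraic signature of the closure condition of \cref{prop:n=4}. Factoring the eliminant, the degree-10 polynomial displayed in the statement is the unique irreducible factor whose real vanishing set is swept by the family; the other factors are extraneous, arising from the sign choices of the $u_i$ and from a reflected conjugate family.

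For the two bounding vertical lines, note that the $x$-coordinate of $C_1'(t)$ is an algebraic function of $t$ of moderate degree, so its extrema can be computed by differentiating its minimal polynomial in $t$. The reflective symmetry of $\Q$ about the axis of $\P$ forces these extrema to occur exactly at the two symmetric configurations of $Q'$ (respectively: the one with a side of $Q'$ perpendicular to the axis, and the one with a diagonal of $Q'$ along the axis), since at such a parameter $t_0$ one has $t\mapsto -t$ as a local symmetry and hence $x'(t_0)=0$. Substituting these two parameter values into $C_1'$ and simplifying against $\sqrt{\sqrt{5}-2}$ produces the two claimed $x$-values; the numerical gap of approximately $f/25$ follows by subtraction. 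Connectedness of the geometrically realized component follows from the continuity and single-valuedness of $t\mapsto C_1'(t)$ on the $S^1$ closure-parameter space.

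The main obstacle is the elimination: four square-root constraints sharply inflate the degree of the elimination ideal, and one must carefully discard the spurious factors introduced by squaring and correctly identify which real component is traced by the actual polar perimeter centroids. This step is computationally heavy but entirely algorithmic, and closely parallels the simpler $N=3$ analogue that produced the degree-4 equation for the locus of $X_{10}'$ earlier in the paper.
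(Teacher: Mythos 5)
The paper omits all CAS-based proofs but states explicitly that its results were obtained exactly this way --- by symbolic elimination from the explicit vertex parametrizations in the appendix --- so your proposal reconstructs essentially the same argument, correctly handling the non-rationality of the perimeter centroid via auxiliary variables for the edge lengths and discarding extraneous factors of the eliminant. The only soft spot is the bounding-line step: reflective symmetry of the family guarantees that the two symmetric configurations are critical points of the $x$-coordinate of $C_1'$, but not that they are its only critical points, so identifying them as the global extrema still requires solving $x'(t)=0$ in full (which the minimal-polynomial differentiation you mention does accomplish).
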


\section{Parabola-inscribed pentagons}
\label{sec:n5}
Referring to \cref{fig:n=5}, consider a family of pentagons inscribed in a parabola $\P$ of focal distance $f$, and circumscribed about a focus-centered circle $\C$ of radius $r$.

\begin{figure}
    \centering
    \includegraphics[trim=100 50 25 200,clip,width=.9\textwidth,frame]{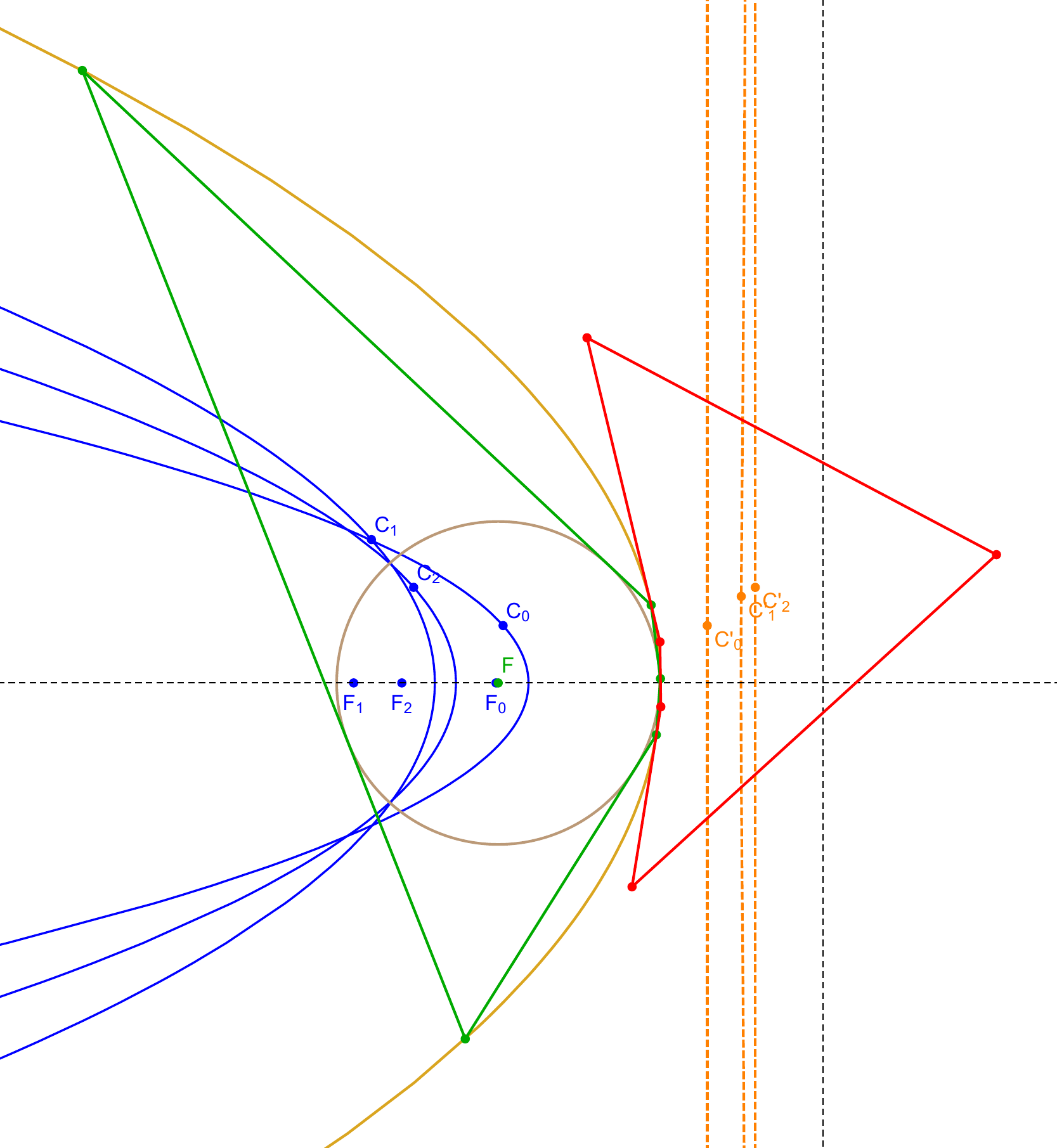}
    \caption{Parabola-inscribed pentagons (green), and their polar polygon (red). The loci of vertex $C_0$, perimeter $C_1$, and area centroids $C_1$ are parabolas (blue) coaxial with $\P$ (gold). Over the polar family, $C_0'$ and $C_2'$ are straight lines (dashed orange) perpendicular to the directrix (dashed black). Though the locus of the perimeter centroid $C_1'$ is indistinguishable from a straight line, it is an algebraic curve of degree likely much higher than 10 (since that is the degree for $C_1'$ on $N=4$).}
    \label{fig:n=5}
\end{figure}

\begin{proposition}
The pair $\P,\C$ will admit a Poncelet family of pentagons iff $r/f$ is the only positive root of the following sextic polynomial ($r/f\approx 0.995219$):
\[ x^6+ 12 x^5- 28 x^4 + 32 x^3+ 112 x^2 - 64 x -64 = 0. \]
\label{prop:n=5}
\end{proposition}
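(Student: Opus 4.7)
The plan is to extend the strategy of \cref{prop:n=3,prop:n=4}: exhibit a single closed symmetric Poncelet pentagon, extract its closure condition as an algebraic relation in $r/f$, and invoke Poncelet's porism to promote the existence of that one pentagon to the full family. As in the triangle proof, the natural candidate is a degenerate symmetric pentagon $P_1 P_2 P_3 P_4 P_5$ whose axis of symmetry coincides with the axis of $\P$ and whose apex vertex $P_1$ is the point of $\P$ at infinity. The two sides adjacent to $P_1$ are then vertical lines; tangency to $\C$ pins the pair $P_2,P_5$ (mirror images across the axis) at $(\pm r,\,r^2/(4f))$. The opposite horizontal side $P_3 P_4$ must be tangent to $\C$ from below, which pins the pair $P_3,P_4$ at $(\pm 2\sqrt{f(f-r)},\,f-r)$. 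The only remaining closure requirement is that the sloped side $P_2 P_3$ also be tangent to $\C$.

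To handle that requirement, parametrize $\P$ (taken as $y=x^2/(4f)$, following \cref{prop:n=3,prop:n=4}) by $P(t)=(2ft,ft^2)$. A short computation shows the chord $P(t_1)P(t_2)$ has equation $(t_1+t_2)\,x-2y-2f\,t_1 t_2=0$, and its distance from the focus $(0,f)$ equals $2f|1+t_1 t_2|/\sqrt{(t_1+t_2)^2+4}$. Setting this equal to $r$ and writing $k=r/(2f)$, tangency becomes
\[
(1+t_1 t_2)^2 \;=\; k^2\bigl[(t_1+t_2)^2+4\bigr].
\]
With $t_2=k$ and $t_3=\sqrt{1-2k}$, this reduces after simplification to
\[
k^4+4k^2-1 \;=\; 2k(1-k^2)\sqrt{1-2k}.
\]

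Squaring eliminates the radical and, after substituting $k=r/(2f)$, normalizing $f=1$, and multiplying through by $2^8$, yields the degree-$8$ identity
\[
r^8+16r^7+16r^6-128r^5+352r^4+256r^3-768r^2+256 \;=\; 0.
\]
The decisive step is to recognize that this factors as
\[
\bigl(r^6+12r^5-28r^4+32r^3+112r^2-64r-64\bigr)\bigl(r^2+4r-4\bigr),
\]
in which the quadratic factor has positive root $r=2(\sqrt{2}-1)$, exactly the $N=3$ ratio of \cref{prop:n=3}. That root is spurious: it was introduced by the squaring step and corresponds to the opposite sign in the radical equation, i.e., to the configuration that closes after three steps rather than five. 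Discarding it leaves the claimed sextic as the genuine $N=5$ closure condition.

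Finally, the sextic evaluates to $-64$ at $r=0$ and $+1$ at $r=1$, and one checks it remains positive for $r>1$, so the unique positive real root lies in $(0,1)$ and equals $\approx 0.995219$. Poncelet's porism then promotes closure of this one symmetric pentagon to closure of the entire one-parameter family, and the converse runs in reverse. The principal obstacle is purely algebraic: carrying out the degree-$8$ expansion cleanly enough to reveal the spurious $(r^2+4r-4)$ factor; once that factorization is identified, the geometric conclusion is immediate.
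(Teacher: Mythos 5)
Your proposal is correct and takes essentially the same approach as the paper: both exploit the symmetric degenerate pentagon with one vertex at infinity (you call it $P_1$, the paper $P_4$), reduce closure to a single remaining tangency condition, and invoke the porism; your chord-distance computation, the degree-$8$ polynomial, its factorization into the sextic times $r^2+4r-4$, and the identification of the quadratic factor as extraneous (at $r=2(\sqrt{2}-1)f$ the points $P_2$ and $P_3$ coincide and the radical equation holds only with the opposite sign) all check out. The only divergence is procedural: the paper chains the Poncelet map from $P_2$ to $P_3$ and imposes $x_3=r$, apparently landing on the sextic directly, whereas your route of pinning both endpoints first and squaring introduces the spurious $N=3$ factor that you then correctly discard.
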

\begin{proof}
Referring to \cref{fig:n=5-proof}, without loss of generality, let $\P$ be the unit parabola $y=x^2$ with focus $F=[0,1/4]$ and let $\C$ be a circle of radius $r$ centered at $F$. Consider the Poncelet pentagon $P_i$, $i=1,\ldots,5$ with $P_4$ at infinity, and $P_1 P_2$ horizontal and tangent to $\C$ at $[0,1/4-r]$. Compute the next Poncelet vertex $P_3=[x_3,y_3]$ as the intersection of a tangent to $\C$ from $P_2$ with $\P$. By requiring that $x_3=r$, we obtain the sextic in the claim.
\end{proof}

\begin{figure}
    \centering
    \includegraphics[width=.7\textwidth]{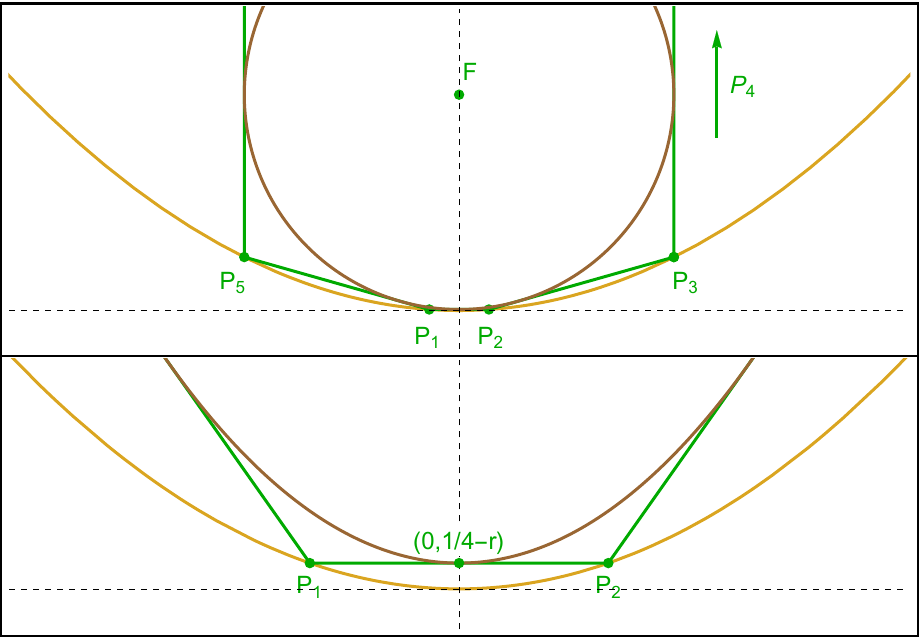}
    \caption{Construction used to derive $r/f$ in \cref{prop:n=5}. Top (resp. bottom) shows the complete picture (resp. a detailed view near the vertex)}
    \label{fig:n=5-proof}
\end{figure}

\noindent Referring to \cref{fig:n=5}:

\begin{conjecture}
Over the parabola-inscribed pentagon family, the loci of vertex, perimeter, and area centroids are parabolas coaxial with $\P$.
\end{conjecture}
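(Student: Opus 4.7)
The plan is to follow the explicit-parametrization plus computer-algebra strategy that succeeded for $N=3$ and $N=4$. Writing a point on $\P$ as $P(t)=(2ft,ft^2)$ (focus at $(0,f)$), the chord through $P(s)$ and $P(t)$ is tangent to the focus-centered caustic $\C$ of radius $r$ iff
\[ 4f^2(1+st)^2 \;=\; r^2\bigl(4+(s+t)^2\bigr). \]
Fixing $t_1=\tau$ as the family parameter, I iterate this quadratic relation (choosing at each step the root that continues the Poncelet orbit) to obtain $t_2,\ldots,t_5$ as algebraic functions of $\tau$; closure $t_6\equiv t_1$ is guaranteed by the sextic condition of \cref{prop:n=5}.

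Next I assemble the three centroids as functions of $\tau$. Writing $S_k := \sum_i t_i^k$, the vertex centroid is immediately $C_0=(2fS_1,fS_2)/5$. The perimeter centroid $C_1$ is the weighted average of the side midpoints $\bigl(f(t_i+t_{i+1}),\,f(t_i^2+t_{i+1}^2)/2\bigr)$ with weights $f\,|t_{i+1}-t_i|\sqrt{4+(t_i+t_{i+1})^2}$, while the area centroid $C_2$ comes from the standard signed-triangle moment sum. To show the locus of $C_k=(X_k(\tau),Y_k(\tau))$ is a parabola coaxial with $\P$ amounts to exhibiting constants $A_k,B_k,D_k$ (depending only on $f,r$) such that $Y_k = A_k X_k^2 + B_k X_k + D_k$ identically in $\tau$. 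For $C_0$ this boils down to a specific quadratic relation between the elementary symmetric polynomials $e_1,e_2$ of $t_1,\ldots,t_5$ that must hold along the Poncelet orbit as a consequence of the five chord-tangency equations; analogous but higher-degree identities must be found for $C_1$ and $C_2$. In practice I would fit the three candidate parabolas numerically by sampling $\tau$ at several widely-separated pentagons, then verify symbolically that each target identity vanishes modulo the Poncelet ideal generated by the five chord-tangency relations together with the sextic for $r/f$, for example via Gr\"obner-basis reduction.

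The main obstacle is algebraic. For $N=3,4$ the constant $r/f$ lies in a quadratic extension of $\mathbb{Q}$ and the vertex coordinates remain rational functions of $\tau$ after a mild substitution, keeping the CAS elimination tractable; for $N=5$ the constant is a root of an irreducible sextic, so the vertex coordinates live in a degree-$6$ (or larger, after composing the chord-tangency iteration) algebraic extension of $\mathbb{Q}(f,\tau)$, and symbolic resultants blow up rapidly. This is precisely why the statement is posed as a conjecture rather than a proposition. A structurally cleaner route would be to transport the $N\le 4$ result through the bicentric polar correspondence of \cref{app:bic}, but since polar duality does not map centroids to centroids pointwise, this would first require identifying, for each $k\in\{0,1,2\}$, a weighted centroid-like functional on the bicentric side whose polar image equals $C_k$ on the parabola side, and then verifying that the discrepancy with the naive centroid vanishes on the Poncelet family --- a nontrivial task in its own right, but one that would sidestep the degree-$6$ extension entirely.
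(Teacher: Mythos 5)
The statement you are addressing is left as a conjecture in the paper: the authors offer only numerical evidence (\cref{fig:n=5}) and no proof, so there is no argument of theirs to compare yours against. Your proposal is an honest roadmap rather than a proof: the tangency condition $4f^2(1+st)^2=r^2\bigl(4+(s+t)^2\bigr)$ is correct for $\P: y=x^2/(4f)$ with focus $(0,f)$, the centroid formulas are right, and you correctly diagnose the obstruction (for $N=5$ the ratio $r/f$ generates a degree-six extension and the iterated root choices push the vertex coordinates into an unwieldy algebraic extension of $\mathbb{Q}(f,\tau)$). But none of the target identities $Y_k=A_kX_k^2+B_kX_k+D_k$ is actually established, so as it stands nothing is proved. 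One small point: in your coordinates the axis of $\P$ is the $y$-axis, so ``coaxial'' requires $B_k=0$ as well; this does follow from the reflection symmetry of the configuration, but it should be said.

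There is, however, a route that settles the vertex and area centroids completely and with no computer algebra, and you brush past it. By the theorem of Schwartz and Tabachnikov cited in the paper, the loci of $C_0$ and $C_2$ over \emph{any} Poncelet family are conics. The pair $(\P,\C)$ is invariant under reflection in the axis of $\P$, so each such conic is symmetric about that axis. Moreover the Poncelet parameter passes through the position where one vertex is the point at infinity of $\P$; at that instant $C_0$ (and $C_2$) escapes to infinity in the direction of the axis, so the locus-conic contains the axis-direction point of the line at infinity. A nondegenerate conic symmetric about the axis cannot be an ellipse (no real points at infinity) and cannot be a hyperbola containing the axis direction at infinity (its asymptote directions are either a reflection-swapped pair, neither of which is the axis direction, or the horizontal--vertical pair, which forces degeneracy); hence it is a parabola tangent to the line at infinity at the axis direction, i.e.\ a parabola coaxial with $\P$. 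This argument is independent of $N$ and of the sextic for $r/f$, so it proves the $C_0$ and $C_2$ parts of this conjecture and of \cref{conj:centroid}. The genuinely open content is the perimeter centroid $C_1$, for which no general conicity theorem exists --- indeed the paper stresses that $C_1$ is generically \emph{not} a conic over Poncelet families --- and there your proposal offers no idea beyond brute-force elimination, which is exactly where it would stall.
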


\begin{conjecture}
Over the family of polar polygons to parabola-inscribed pentagons, the locus of vertex and area vertices are lines perpendicular to the directrix while that of the perimeter centroid is an algebraic curve of degree at least four.
\end{conjecture}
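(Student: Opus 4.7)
Parametrize $\P:y^{2}=-4fx$ by $t\mapsto P(t)=(-ft^{2},\,2ft)$, so that an inscribed pentagon is a tuple $(t_{1},\ldots,t_{5})$. The chord $P_{i}P_{i+1}$ has equation $2x+(t_{i}+t_{i+1})y-2ft_{i}t_{i+1}=0$, and computing its distance from the focus $F=(-f,0)$ shows tangency to $\C$ is equivalent to
\[
T(t_{i},t_{i+1})\;:=\;4f^{2}(1+t_{i}t_{i+1})^{2}-r^{2}\bigl(4+(t_{i}+t_{i+1})^{2}\bigr)=0.
\]
The Poncelet map sends $t_{i}$ to the second root $\sigma(t_{i})$ of $T(t_{i},\cdot)$, and closure $\sigma^{5}=\mathrm{id}$ is forced precisely by the sextic on $r/f$ in \cref{prop:n=5}. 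Intersecting the tangents to $\P$ at $P_{i}$ and $P_{i+1}$ yields the explicit polar pentagon vertices
\[
Q_{i}=\bigl(-ft_{i}t_{i+1},\;f(t_{i}+t_{i+1})\bigr),
\]
which sit on a hyperbola derivable in the same fashion as \cref{prop:n=4}.

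\textbf{Vertex and area centroids.} The vertex centroid is $C_{0}'=\tfrac{1}{5}\bigl(-f\sum_{\mathrm{cyc}}t_{i}t_{i+1},\;2f\sum t_{i}\bigr)$, so the first half of the conjecture reduces to showing that the cyclic quantity $p:=\sum_{\mathrm{cyc}}t_{i}t_{i+1}$ is constant along the Poncelet orbit. My plan is to parametrize nearby orbits by $t_{1}=t$, iteratively solve $T(t_{i},t_{i+1})=0$ for $t_{2}(t),\ldots,t_{5}(t)$ as algebraic functions of $t$, and then verify in CAS that $p(t)$ collapses to a constant modulo the sextic closure together with the five $T$-relations; equivalently, a Gr\"obner-basis reduction of $p-c$ in the ideal $\langle T(t_{1},t_{2}),\ldots,T(t_{5},t_{1})\rangle$ should force a unique value of $c$. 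Applying the shoelace formula $\bar{x}=\bigl(3\sum(x_{i}y_{i+1}-x_{i+1}y_{i})\bigr)^{-1}\sum(x_{i}+x_{i+1})(x_{i}y_{i+1}-x_{i+1}y_{i})$ to the $Q_{i}$'s and running the same elimination handles the area centroid $C_{2}'$.

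\textbf{Perimeter centroid.} $C_{1}'=\sum\ell_{i}M_{i}\big/\sum\ell_{i}$, with $M_{i}$ the midpoint of $Q_{i}Q_{i+1}$ and $\ell_{i}=|Q_{i}Q_{i+1}|$, involves five Euclidean lengths and is therefore a genuinely higher-degree object. The plan is: (i) express $(x(t),y(t))=C_{1}'(t)$ as a rational function of $t$ and auxiliary squared lengths $u_{i}=\ell_{i}^{2}$; (ii) append the defining equations $u_{i}=(Q_{i+1,x}-Q_{i,x})^{2}+(Q_{i+1,y}-Q_{i,y})^{2}$; (iii) eliminate $t,u_{1},\ldots,u_{5}$ by iterated resultants to obtain an implicit polynomial $F(x,y)=0$ for the locus. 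To rule out that $C_{1}'$ lies on a conic (and hence certify $\deg F\ge 4$), it is enough to sample five points of the locus and check that the associated $5\times 5$ conic-incidence determinant is non-zero.

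\textbf{Main obstacle.} The decisive bottleneck is the implicit elimination for $C_{1}'$: the analogous $N=4$ computation already produced a degree-$10$ polynomial with three-digit coefficients, and here the sextic for $r/f$ prevents any simplification via nested square roots. A practical workaround is to exploit the cyclic $\mathbb{Z}/5$-symmetry of the Poncelet dynamics by re-expressing every rational function of the $t_{i}$'s in terms of power sums $p_{k}=\sum t_{i}^{k}$ and cyclic monomials, which shrinks the working ideal substantially; failing that, one can recover $F(x,y)$ by evaluating $C_{1}'$ at many high-precision numerical values of $t$ and fitting the coefficients by linear algebra against a degree-bounded monomial template, in line with the CAS-based proofs the authors already invoke elsewhere in the paper.
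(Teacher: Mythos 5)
First, be aware that the statement you are addressing appears in the paper only as a \emph{conjecture}: the authors give no proof for $N=5$, only numerical evidence, so there is no proof of record to compare against — only the worked-out $N=3,4$ analogues. Your setup is correct and matches the authors' machinery: the parametrization $P(t)=(-ft^{2},2ft)$, the chord equation, the tangency condition $T(t_{i},t_{i+1})=0$, and the polar vertices $Q_{i}=(-ft_{i}t_{i+1},\,f(t_{i}+t_{i+1}))$ all check out, and reducing the first half of the claim to the constancy of $\sum_{\mathrm{cyc}}t_{i}t_{i+1}$ along the Poncelet orbit is the right move. (Note that the paper's phrase ``perpendicular to the directrix'' is evidently a slip for ``parallel to the directrix'' / ``perpendicular to the axis,'' as the $N=4$ propositions and \cref{conj:centroid-polar} make clear; your reading, $x=\mathrm{const}$, is the intended one.)

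However, what you have written is a computational programme, not a proof. The two load-bearing claims --- that $p=\sum t_{i}t_{i+1}$ and the analogous area-centroid coordinate reduce to constants modulo the ideal generated by the five tangency relations and the closure sextic, and that the eliminated polynomial $F(x,y)$ for $C_{1}'$ has degree at least four --- are precisely what the conjecture asserts, and you defer both to an unexecuted Gr\"obner/resultant computation whose feasibility you yourself identify as the bottleneck. Moreover, your proposed certification of the degree bound fails on two counts. Any five points in the plane lie on some (possibly degenerate) conic, so a $5\times 5$ incidence determinant proves nothing; you need six points and the $6\times 6$ determinant of the vectors $(x^{2},xy,y^{2},x,y,1)$. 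And ruling out conics only yields $\deg F\ge 3$, whereas the claim is degree at least four, so you must additionally exhibit ten points of the locus not lying on a common cubic --- and do so in exact arithmetic over the number field generated by the sextic root, since a nonvanishing floating-point determinant is not a proof. Until the eliminations are actually carried out and the degree certification is repaired, the statement remains, as in the paper, a conjecture.
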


\section{Parabola-inscribed hexagons and summary}
\label{sec:n6}
\subsection{Hexagons and summary}

Referring to \cref{fig:n=6}, we can also consider a family of parabola-inscribed hexagons.

\begin{figure}
    \centering
    \includegraphics[trim=100 50 20 200,clip,width=.9\textwidth,frame]{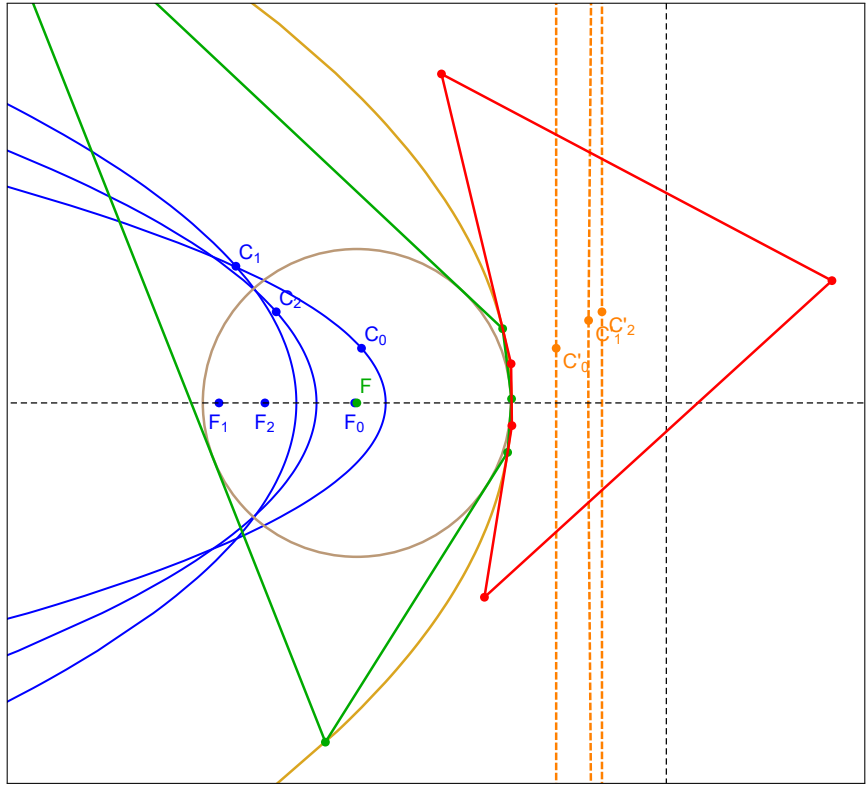}
    \caption{Hexagons (green) inscribed in a parabola $\P$. As before, the loci of $C_0$, $C_1$, and $C_2$ are parabolas (blue) coaxial $\P$. Over the polar family (red), the loci of $C_0',C_2'$ are lines perpendicular to the axis while that of $C_1'$ is algebraic, and though visually a straight line, its degree is likely much higher than 10 (since that is the degree for $C_1'$ on $N=4$).}
    \label{fig:n=6}
\end{figure}

An analogous construction (based on symmetric configurations) was used to obtain $r/f$ required for convex $N=6$. A summary of all $r/f$ thus obtained appears in \cref{tab:rOvF}.

\begin{table}[H]
\begin{tabular}{|c|c|c|c|}
\hline
$N$ & $r/f$ & $r/f$ (num.) & Cayley \\
\hline
3 & $2(\sqrt{2}-1)$ & $0.828427$ & 4 \\
4 & $2\sqrt{\sqrt{5}-2}$ & $0.971737$  & 4 \\
5 & n/a  & $0.995219$ & 8 \\
6 & n/a & $0.999183$ & 8 \\
\hline
\end{tabular}
\caption{Table of $r/f$ required for closure of convex $N$-gons inscribed in a parabola, and circumscribed about a focus-centered circle. Algebraic expressions (2nd column) are only possible for $N=3,4$. The last column shows the number of possible solutions for $r/f$ if one were to include cases where circle and parabola intersect (the Poncelet polygon may be self-intersecting and/or non-convex). For Cayley's conditions in the general case, see \cite{dragovic11}.}
\label{tab:rOvF}
\end{table}

\section{Generalizing centroidal loci}
\label{sec:generalize}
Let $\R$ be a Poncelet family of $N$-gons inscribed to a parabola $\P$, and circumscribed about a focus-centered circle $\C$. Experimental the evidence for the $N=3,4,5,6$ cases, we propose the following generalizations (reader contributions are encouraged):

\begin{conjecture}
Over $\R$, for any $N{\geq}3$, the loci of vertex, perimeter, and area centroids are parabolas coaxial with $\P$.
\label{conj:centroid}
\end{conjecture}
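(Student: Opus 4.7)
The plan is to reduce \cref{conj:centroid} to a symmetric-function identity on a one-parameter family. Rationally parametrize $\P$ so that a point is $P(t)$ with focal distance $|FP|=f(1+t^2)$; then each Poncelet $N$-gon of $\R$ is encoded by a tuple $(t_1,\ldots,t_N)$, and tangency of the side $P_iP_{i+1}$ to $\C$ reduces to the biquadratic relation
\begin{equation*}
4f^2(1+t_it_{i+1})^2 \;=\; r^2\bigl(4+(t_i+t_{i+1})^2\bigr).
\end{equation*}
This has two useful consequences. First, it cuts out an elliptic curve $E$ on which the Poncelet step acts as translation by a point $Q$ of order~$N$, so the entire family is parametrized by a single variable on $E/\langle Q\rangle$. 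Second, it yields the rational edge-length formula $|P_iP_{i+1}|=(2f^2/r)\,|t_i-t_{i+1}|\,|1+t_it_{i+1}|$, and in particular each of $C_0,C_1,C_2$ is a symmetric rational function of $(t_1,\ldots,t_N)$.

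Next I would exploit the reflection symmetry of the configuration across the axis of $\P$. This involution preserves $\P$ and $\C$, hence the family $\R$, and acts on parameters by $t_i\mapsto -t_i$. Each centroid $(X,Y)$ is sent to $(X,-Y)$, so every centroidal locus is automatically symmetric under $Y\mapsto -Y$. Consequently, once such a locus is shown to be an algebraic curve of degree at most~$2$, it is forced to be a parabola coaxial with $\P$; coaxiality in \cref{conj:centroid} need not be proved separately once the degree bound is in place.

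The heart of the argument is establishing the degree-$2$ bound. For the vertex centroid $C_0=(fN^{-1}\sum t_i^2,\; 2fN^{-1}\sum t_i)$ this amounts to proving the identity
\begin{equation*}
\sum_{i=1}^N t_i^2 \;-\; c_N\Bigl(\sum_{i=1}^N t_i\Bigr)^{\!2} \;=\; d_N
\end{equation*}
on the Poncelet family, with constants $c_N,d_N$ depending on $f,r,N$ (and already predicted by the explicit focus/vertex data of the propositions for small $N$). I would argue this by regarding both sides as meromorphic functions on $E/\langle Q\rangle$: matching their pole orders and leading coefficients at the ramification points pins down $c_N$, and evaluation at the symmetric member of the family pins down $d_N$. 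For $C_2$ the same machinery applies via the shoelace formula, and for $C_1$ via the rational edge-length expression above; the relations they satisfy are structurally identical, though with heavier bookkeeping because higher monomials in the $t_i$ appear.

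The main obstacle is uniform control over $N$ of the trace sums $\sum_i\psi(P_0+iQ)$ for the coordinate functions $\psi=t,t^2$ on $E$. For each fixed $N$ these can be handled by a finite elimination in elementary symmetric polynomials using the tangency relation and cyclic closure, which is essentially what underlies the $N\le 6$ cases already proved in the paper; for arbitrary $N$ a cleaner handle is needed. The most promising attack is to transport the problem into the bicentric picture via the polar duality of \cref{app:bic}, where the analogous vertex-centroid locus is classically a circle, and then show that the corresponding parabola-inscribed centroid is obtained from the bicentric one by a rational map whose image of a circle is forced to be a parabola---reducing the conjecture to a classical statement about bicentric Poncelet loci.
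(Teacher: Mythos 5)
First, note that \cref{conj:centroid} is stated in the paper as a \emph{conjecture}, supported only by the computations for $N=3,4,5,6$; the paper offers no proof, so your proposal is being measured against an open problem rather than against an existing argument. Your normal form is correct and useful: with $P(t)=(-ft^2,2ft)$ one indeed has $|FP|=f(1+t^2)$, the tangency of $P_iP_{i+1}$ to $\C$ is exactly $4f^2(1+t_it_{i+1})^2=r^2\bigl(4+(t_i+t_{i+1})^2\bigr)$, and the edge length simplifies to $(2f^2/r)\,|t_i-t_{i+1}|\,|1+t_it_{i+1}|$. The elliptic-curve framing of the Poncelet step is also standard and appropriate. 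As a proof, however, the proposal has three genuine gaps.

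(i) The claim that ``degree at most $2$ plus the symmetry $Y\mapsto -Y$ forces a parabola coaxial with $\P$'' is false: a circle centered on the axis, or an axis-aligned ellipse or hyperbola, is also a conic invariant under that reflection. What you actually need is the specific identity $\sum t_i^2=c_N(\sum t_i)^2+d_N$, i.e.\ that the abscissa is a quadratic polynomial in the ordinate; this is the entire content of the statement for $C_0$, and it is asserted but never established (the pole-matching argument on $E/\langle Q\rangle$ is a plausible plan, especially since the conicity of the vertex and area centroid loci is already known for general Poncelet families, but it is not carried out, and degeneration and the hyperbola alternative are not excluded). (ii) The perimeter centroid $C_1$ is the genuinely novel part of the conjecture --- the paper emphasizes that its locus is \emph{not} a conic for general Poncelet pairs --- and ``structurally identical, though with heavier bookkeeping'' does not engage with why the focus-centered circular caustic is special. $C_1$ is a ratio of symmetric functions involving absolute values of edge lengths, so there is no a priori degree bound at all; this case cannot be inherited from the $C_0$ argument. (iii) The closing reduction to the bicentric picture via \cref{app:bic} fails at the outset: polarity with respect to the circumcircle maps polygons to polygons, but it does not transform the centroid of a polygon into the centroid of its polar polygon under any fixed (rational or otherwise) pointwise map, so there is no ``rational map carrying the bicentric circle to a parabola'' to invoke. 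In summary, the setup is sound and could underpin a proof for $C_0$ and $C_2$, but none of the three loci is actually shown to be a parabola, and the proposal leaves the hardest case, $C_1$, essentially untouched.
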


\begin{conjecture}
Over $\R$ and for any $N{\geq}3$, the loci of vertex and area centroids of the polar polygons with respect to $\P$ are straight lines parallel to the directrix of $\P$.
\label{conj:centroid-polar}
\end{conjecture}

Let $\B'$ be the conic-inscribed polar image of a generic bicentric family of $N$-gons with respect to the bicentric circumcircle (see 
 \cref{app:bic}).
 
Recall that the locus of vertex and area centroids $C_0,C_2$ are conics over any Poncelet family, while that of the perimeter centroid $C_1$ is not, in general, a conic \cite{sergei2016-com}. However, as A. Akopyan reminded us \cite{akopyan2021-private}:

\begin{remark}
if a polygon circumscribes a circle (of center $O$), $C_1,C_2,O$ are collinear and  $(C_1-O)=(3/2)(C_2-O)$.
\end{remark}

\noindent Therefore, and analogously to \cite[Corollary 2]{garcia2022-steiner-soddy}:

\begin{corollary}
Over $\B'$, the locus of the perimeter centroid is a conic.
\end{corollary}

Let $\P'$ be the conic to which $\B'$ is inscribed.

\begin{conjecture}
Over the polar polygons of $\B'$ with respect to $\P'$, the locus of the perimeter centroid is never a conic.
\end{conjecture}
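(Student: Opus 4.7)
My plan is to combine an explicit computation in a limiting case with a Zariski-closedness argument on the parameter space of bicentric pairs. Parametrize the space of $N$-gon bicentrics by $(R,d)$, with $R$ the circumradius, $d$ the center-distance, and the incircle normalized to radius $1$; the Cayley closure condition cuts out a one-parameter subvariety $\mathcal{M}_N$. Each $(R,d)\in\mathcal{M}_N$ produces, by polar inversion through the circumcircle, the conic-inscribed family $\B'$; dualizing again through the outer conic $\P'$ gives the polar family whose perimeter centroid $C_1'$ we wish to understand.

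After parametrizing the Poncelet variable by an elliptic argument $t$, the vertices $Q_i'(t)$ of the polar polygon are rational in $t$, so each side length $\|Q_{i+1}'-Q_i'\|$ is the square root of a rational function $S_i(t;R,d)$. Thus $C_1'$ lies in the field $K=\mathbb{Q}(R,d)(t)[\sqrt{S_1},\ldots,\sqrt{S_N}]$. If the locus were a conic $F(x,y)=0$ with $\deg F\le 2$, substituting $(x,y)=C_1'$ and rationalizing all $N$ square roots in succession would yield a polynomial identity in $t$ over $\mathbb{Q}(R,d)$ that must hold identically.

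The key observation is that the parabola-inscribed family of \cref{sec:n3-polar,sec:n4} arises as a specific point of $\mathcal{M}_N$, namely the limit in which the bicentric incircle passes through the circumcenter (see \cref{app:bic}). At this point, the propositions in \cref{sec:n3-polar,sec:n4} show explicitly that $C_1'$ satisfies an irreducible polynomial of degree $4$ (for $N=3$) and of degree $10$ (for $N=4$). Since ``being a conic'' is a Zariski-closed condition on $(R,d)$, if the locus were a conic on any open subset of $\mathcal{M}_N$ it would remain so under specialization to the parabolic limit, contradicting the explicit computation. The same polar-and-length procedure, carried out for $N=5,6,\ldots$, yields analogous degree bounds and dispatches the generic case.

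The main obstacle is ruling out the possibility that the locus drops in algebraic degree and becomes a conic at \emph{isolated} special parameters $(R,d)\in\mathcal{M}_N$ away from the parabolic limit. I would attack this by lower semicontinuity: the degree of the image curve can only jump downward on a proper Zariski-closed subset of $\mathcal{M}_N$. A Puiseux expansion near the parabolic limit should show that the leading $\sqrt{S_i}$ terms produce $N$ Galois-independent conjugates of $C_1'$ over $\mathbb{Q}(R,d)(t)$, so the minimal polynomial of $C_1'$ cannot acquire a conic factor under small perturbation of $(R,d)$. A finite case analysis at the residual degenerate parameters---exploiting the symmetry of the bicentric configuration, which preserves both the locus and any hypothetical conic---would then close out the argument.
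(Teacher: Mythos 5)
This statement is left as a \emph{conjecture} in the paper: the authors offer only experimental evidence and no proof, so there is no argument of theirs to compare yours against. Your proposal is a plausible strategy, but as written it does not close the statement, for two concrete reasons.

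First, the conjecture says ``never,'' i.e.\ for every point of the one-dimensional moduli space $\mathcal{M}_N$, whereas your specialization argument only yields genericity. The logic ``if the locus were a conic on an open subset it would remain so in the parabolic limit'' is sound (a pencil of conics degenerates to a curve of degree at most $2$), so the explicit degree-$4$ and degree-$10$ computations at the parabolic point do rule out conic loci for \emph{generic} $(R,d)$ when $N=3,4$ --- provided one also knows that the branch actually swept is not contained in a conic factor of those polynomials, i.e.\ that the Zariski closure of the traced branch genuinely has degree $4$ resp.\ $10$; the paper asserts the degree of the ambient curve but not irreducibility. What remains is exactly the finite exceptional set where the degree could drop, and here your appeal to semicontinuity works against you: semicontinuity is precisely the statement that the degree \emph{may} fall on a proper closed subset, and a Puiseux expansion near the parabolic limit says nothing about parameters far from that limit. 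The ``finite case analysis at the residual degenerate parameters'' is the entire remaining content of the conjecture and is not carried out, nor are those parameters identified.

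Second, the argument for $N\geq 5$ is not supplied. The field-theoretic framing ($C_1'$ lives in a multiquadratic extension generated by $\sqrt{S_1},\dots,\sqrt{S_N}$) gives an \emph{upper} bound on complexity, not the lower bound you need: sums of such square roots can collapse to rational functions (the vertex and area centroids do exactly this and sweep lines/conics). To conclude non-conicity you must prove the Galois conjugates of $C_1'$ over $\mathbb{Q}(R,d)(t)$ are genuinely distinct and that no degree-$2$ relation survives, and for $N\geq 5$ even the paper's own evidence is only the conjectural ``degree at least four.'' So the proposal reduces the conjecture to (i) irreducibility/degree claims at one point of $\mathcal{M}_N$ for each $N$, which are only established (by CAS) for $N=3,4$, and (ii) a finite but unexecuted exceptional-parameter analysis; it is a research program rather than a proof.
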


\section{A conserved quantity}
\label{sec:conserved}
As in \cref{app:bic}, let $\B$ denote a bicentric family of $N$-gons inscribed to a circle $\C=(O,R)$, and circumscribed about a second, nested circle $\C'$. Let $d_i$ denote the perpendicular distance from the bicentric circumcenter $O$ to side $P_i P_{i+1}$.

\begin{figure}
    \centering
    \includegraphics[trim=10 10 900 10,clip,width=.7\textwidth,frame]{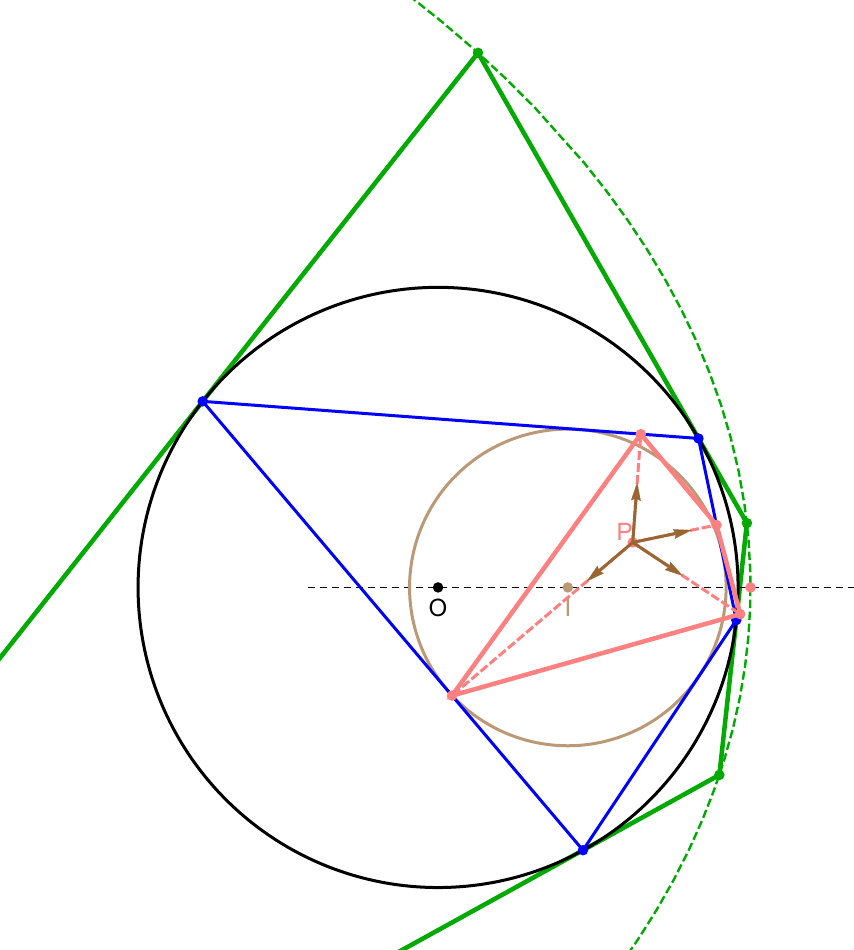}
    \caption{An $N=4$ bicentric polygon is shown (blue). Without loss of generality, in the case shown the circumcenter $O$ is interior to the incircle, i.e., the polar family (green) is ellipse-inscribed. Also shown is the pedal polygon (pink) with respect to a point $P$ in the interior of the circumcircle and the unit vectors (brown) along each perpendicular dropped from $P$ onto the sides. }
    \label{fig:bic-di}
\end{figure}




Referring to \cref{fig:bic-di}:


\begin{lemma}
Over $\B$, the quantity $\sum{d_i}$ is conserved.
\label{lem:bic-di}
\end{lemma}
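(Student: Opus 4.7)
The plan is to parametrize the sides of the bicentric polygon by their tangency points on the incircle, reduce the claim to the invariance of a sum of cosines, and then prove that invariance using the Poncelet closure relation.

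Place $I$ at the origin and the circumcenter at $O=(a,0)$, where $a=|OI|$. The $i$-th side is the tangent line $L_i:\ x\cos\psi_i+y\sin\psi_i=r$ to the incircle at the point $T_i=r(\cos\psi_i,\sin\psi_i)$, and a direct computation of the signed distance from $O$ to $L_i$ yields $d_i=r-a\cos\psi_i$, so
\[
\sum_{i=1}^N d_i \;=\; Nr \;-\; a\sum_{i=1}^N\cos\psi_i.
\]
Since $N$, $r$, and $a$ are fixed by the pair of circles, the lemma reduces to showing that $S:=\sum_i\cos\psi_i$ is invariant along the family.

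The Poncelet closure enters through the requirement that $P_i=L_{i-1}\cap L_i$ lie on the outer circle $(x-a)^2+y^2=R^2$. Writing $\beta_i=(\psi_i-\psi_{i-1})/2$ and $\mu_i=(\psi_i+\psi_{i-1})/2$, a short calculation gives the algebraic form of the Poncelet map
\[
(R^2-a^2)\cos^2\beta_i+2ra\cos\beta_i\cos\mu_i-r^2=0.
\]
Parametrizing the family by $t$ and differentiating this relation at each $i$ produces a linear recursion between the velocities $\dot\psi_{i-1}$ and $\dot\psi_i$; the claim $\dot S=-\sum_i\sin\psi_i\,\dot\psi_i=0$ then amounts to verifying that the corresponding sum telescopes around the closed $N$-orbit.

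Conceptually, the cleanest execution uses the elliptic-function uniformization of the Poncelet map: the closure relation cuts out an elliptic curve in whose uniformizing coordinate $u$ the Poncelet map becomes translation by a fixed shift $\delta$, with $N\delta$ equal to the real period by the $N$-gon closure hypothesis. In this uniformization $\cos\psi(u)$ is an elliptic function, and the invariance of $S=\sum_{i=0}^{N-1}\cos\psi(u_0+i\delta)$ follows from a standard $N$-averaging identity on the elliptic curve. The main obstacle is precisely this last step: one must either push through the telescoping calculation algebraically from the differentiated closure relation, or identify the pole structure of $\cos\psi(u)$ sharply enough to apply the averaging argument. Either route should be tractable, and the elliptic-function one has the additional virtue of making clear how to transport the same invariance to the conic-inscribed polar image used elsewhere in the paper, since the uniformization is intrinsic to the Poncelet pair and not to the specific (circular) outer conic.
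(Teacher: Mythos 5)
Your reduction is sound and in fact lands exactly where the paper's proof starts: with $I$ at the origin and $O=(a,0)$, the identity $d_i=r-a\cos\psi_i$ shows that $\sum d_i$ is constant iff $\sum\cos\psi_i$ is, and $\sum\cos\psi_i$ is (up to the factor $r/N$) the first coordinate of the centroid of the tangency points $T_i=r(\cos\psi_i,\sin\psi_i)$ --- equivalently, the component along $OI$ of the sum of unit normals to the sides. The paper's argument (due to Akopyan) finishes at precisely this point by invoking the known fact that the center of mass of the points of tangency of a Poncelet polygon with its inscribed conic is stationary over the family.

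The genuine gap is that you never establish this invariance. Both routes you sketch --- differentiating the closure relation $(R^2-a^2)\cos^2\beta_i+2ra\cos\beta_i\cos\mu_i-r^2=0$ and "verifying that the sum telescopes," or uniformizing by an elliptic curve and applying an "averaging identity" --- are left as programs rather than proofs, and you say so yourself ("the main obstacle is precisely this last step"). Neither is automatic: the telescoping is not a formal consequence of the recursion between $\dot\psi_{i-1}$ and $\dot\psi_i$ (one must actually exhibit the telescoping summand), and in the elliptic picture the sum $\sum_i g(u_0+i\delta)$ of an elliptic function over an orbit of a translation with $N\delta$ a period is elliptic in $u_0$ with respect to the refined lattice, hence constant only after one checks its poles cancel --- exactly the analysis you defer. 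The shortest repair is to prove (or cite) the stationarity of the tangency-point centroid directly: the vertices of the bicentric polygon, projected to the incircle via the tangency map, form a Poncelet-type family for which this centroid invariance is classical; with that in hand your first displayed identity immediately gives $\sum d_i=Nr-a\sum\cos\psi_i=\mathrm{const}$. As written, however, the proof is incomplete at its essential step.
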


The argument below was kindly provided by A. Akopyan \cite{akopyan2021-private}. 

\begin{proof}
The above statement is equivalent to stating that over $\B$ the sum of unit vectors from a point $P$ in the direction perpendicular to bicentric sides is constant. In turn, the latter is a corollary of the well-known fact that over $\B$, the centroid of the touchpoints of sidelines with $\C'$ is stationary.  
\end{proof}

\noindent Let $\theta_i$, $i=1,\ldots,N$, denote the angles interior to a polygon $\B$.

\begin{proposition}
For all $N$, the porism of polygons polar to $\B$ with respect to its circumcenter conserves $\sum_{i=1}^N{\sin{\theta_i/2}}=(1/R)\sum{d_i}$.
\end{proposition}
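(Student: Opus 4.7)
The plan is to identify each half-angle $\theta_i/2$ of the polar polygon with a simple trigonometric function of the corresponding distance $d_i$, and then invoke Lemma~\ref{lem:bic-di} to close the proof. Throughout, let $\C=(O,R)$ denote the bicentric circumcircle, $P_i$ the vertices of the bicentric polygon (so $|OP_i|=R$), and $V_i$ the vertex of the polar polygon that is the pole of the bicentric side $P_iP_{i+1}$.

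First, I would pin down the geometry of the polar polygon. Since each $P_i$ lies on $\C$, its polar with respect to $\C$ is exactly the tangent line to $\C$ at $P_i$. Therefore the sides of the polar polygon are tangent lines to $\C$, which means the polar polygon \emph{circumscribes} $\C$. Its vertices $V_i$ are the intersections of consecutive tangents, i.e., the poles of the chords $P_iP_{i+1}$. By the standard pole/polar formula for a circle, the pole of a chord at perpendicular distance $d_i$ from $O$ lies along the perpendicular from $O$ to the chord at distance $R^2/d_i$, so $|OV_i|=R^2/d_i$.

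Next I would compute the interior angle $\theta_i$ of the polar polygon at $V_i$. Because the polar polygon circumscribes $\C$, the two edges meeting at $V_i$ are tangent to $\C$, and the incircle $\C$ lies on the interior side of the angle. The standard right-triangle picture (radius to point of tangency perpendicular to the tangent) gives
\[
\sin\!\frac{\theta_i}{2} \;=\; \frac{R}{|OV_i|} \;=\; \frac{d_i}{R}.
\]
Summing over $i$ yields
\[
\sum_i \sin\!\frac{\theta_i}{2} \;=\; \frac{1}{R}\sum_i d_i.
\]

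Finally, Lemma~\ref{lem:bic-di} asserts that $\sum_i d_i$ is invariant along the bicentric family $\B$, so the right-hand side is constant; hence so is the left-hand side, which is exactly the claim.

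The only step that requires any care is the second one, namely verifying that $\sin(\theta_i/2)=d_i/R$ with the correct sign and the correct choice of half-angle (interior vs.\ exterior, and, when $O$ lies outside the bicentric incircle, that $d_i$ should be treated as a directed distance so that $\theta_i$ becomes a directed angle, in the spirit of the figure caption for Fig.~\ref{fig:bic-di}). Once this convention is fixed to match the one used in Lemma~\ref{lem:bic-di}, the algebra is immediate and no further computation is needed.
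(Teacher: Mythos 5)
Your proposal is correct and follows essentially the same route as the paper: identify the polar vertices as poles of the bicentric sides at distance $R^2/d_i$ from $O$, use the tangent-line right triangle to get $\sin(\theta_i/2)=d_i/R$, and conclude via Lemma~\ref{lem:bic-di}. Your extra remark about signs and directed angles in the hyperbolic case is a point the paper also flags, but only in a discussion following its proof.
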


\begin{proof}
The vertices of the tangential polygon are the poles of each side of $\B$ with respect to the circumcircle. Therefore, said vertices are at a distance $D_i=R^2/d_i$ from the $O$. Since $\sin{\theta_i/2}=R/D_i=d_i/R$, per \cref{lem:bic-di}, the claim follows.
\end{proof}

Note that in general, $\theta_i$ is the directed angle $P_{i-1} P_i P_{i+1}$. In the case when $r<d$, the tangential polygon will be inscribed in two branches of a hyperbola. There are only two cases: Either (i) all vertices lie on a first proximal branch of the hyperbola, or (ii) all but one vertex $P_k$ will lie on said branch, with $P_k$ lying on the distal branch. In case (i), all $\theta_i$ are positive whereas in (ii) all are positive except for $\theta_k$. Furthermore, in this case, the supplement of angles $\theta_{k-1}$ and $\theta_{k+1}$ need to be used in the sum. So the invariant sum becomes
\begin{align*}
& \sin\frac{\theta_1}{2} +
\ldots +
\sin\frac{\pi-\theta_{k-1}}{2}
-\sin\frac{\theta_k}{2}
+\sin\frac{\pi-\theta_{k+1}}{2}+
\ldots+
\sin\frac{\theta_N}{2} = \\
& \sin\frac{\theta_1}{2} +
\ldots +
\cos\frac{\theta_{k-1}}{2}
-\sin\frac{\theta_k}{2}
+\cos\frac{\theta_{k+1}}{2}+
\ldots+
\sin\frac{\theta_N}{2}\;\;\cdot
\end{align*}

\section*{Acknowledgements}
\noindent We would like to thank A. Akopyan, B. Gibert, P. Moses, and A. Zaslavsky for their invaluable help with questions and proofs. The second author is fellow of CNPq and coordinator of Project PRONEX/ CNPq/ FAPEG 2017 10 26 7000 508.

\appendix
\section{Vertex parametrizations}
\label{app:vtx}
\subsection{Parabola-inscribed triangles}
 A 3-periodic orbit $P_i=[x_i,y_i]=[  -y_i^2/(4f),y_i]$ is such that
 \begin{align*}
 y_2&=  {\frac { 2\left(1- \sqrt {2}  \right)  \left( 4\,fy_1+\Delta\right) f}{8\,{f}^{2}\sqrt {2}-12\,{f}^{2}+y_1^{2}}},\;\;\;y_3=  \frac { 2\left(  \sqrt {2} -1\right) f\Delta}{8\,{f}^{2}\sqrt {2}-
12\,{f}^{2}+y_1^{2}},
\end{align*}
where $\Delta=\sqrt {16(  8\sqrt{2}-11)f^4 + 8f^2y_1^2 + y_1^4}$.

\subsection{Hyperbola-inscribed polar triangles}
 A 3-periodic orbit $Q_i=[q_{1,i},q_{2,i}]$ is such that
 {\small
 \begin{align*} Q_1&=\left( 1+\sqrt {2} \right) \left[{\frac { \left( 4\,fy_1+\Delta
 \right) y_1}{ 2(2\sqrt{2} + 3)y_1^2 - 8f^2}}, {\frac { \left(  1+\sqrt {2} \right) y_1^{3}-4\,
 \left( 1+\sqrt {2} \right) {f}^{2}y_1-2\,\Delta\,f}{2(2\sqrt{2} + 3)y_1^2 - 8f^2}}\right],\\
Q_2&=\left( 1+\sqrt {2} \right) \left[{\frac { \left( 4\,fy_1-\Delta
 \right) y_1}{2(2\sqrt{2} + 3)y_1^2 - 8f^2}},{\frac { \left( 1+\sqrt {2} \right) y_1^{3}- 4\left( 1+
\sqrt {2}  \right) {f}^{2}y_1+2\,\Delta\,f}{2(2\sqrt{2} + 3)y_1^2 - 8f^2}}\right],\\
Q_3&= \left(1+\sqrt{2}\right)\left[{\frac { \left( 5 - 3\sqrt{2} \right)  \left( ( 1+2\,
\sqrt {2})y_1^{2}-28\,{f}^{2}  \right) f}{7(( 3\, +2
\,\sqrt {2})y_1^{2}-4\,{f}^{2})}},-{\frac {8   {f}^{2}y_1\,}{(3\,+2\, \sqrt 
{2})y_1^{2}-4\,{f}^{2}}}\right],
\end{align*}
where $\Delta=\sqrt{y_1^4 + 8f^2 y_1^2 + 16(  8\sqrt{2}-11)f^4}$.
}

\subsection{Parabola-inscribed quadrilaterals}

 A 4-periodic orbit $P_i=[x_i,y_i]=[-\frac{1}{4f}   y_i^2,y_i]$ is such that:
 \begin{align*}
 y_2&=\frac { \left( 2\,\sqrt { \sqrt {5}-2}\;\Delta_1+4\,f y_1\, \left( 3-
\sqrt {5}  \right)  \right) f}{4\,{f}^{2}\sqrt {5}-8\,{f}^{2}-y_1^{2}},\;\;\;y_3=  \frac { 4\left( 2-\sqrt {5} \right) f^{2}}{y_1}, \\
y_4&= -{\frac { \left( 2\,\sqrt { \sqrt {5}-2}\;\Delta_1+4\,fy_1\, \left( 
\sqrt {5}-3 \right)  \right) f}{4\,{f}^{2}\sqrt {5}-8\,{f}^{2}-y_1^{2}}},
 \end{align*}
where $\Delta_1=\sqrt{y_1^{4}+8\,{f}^{2}y_1^{2}+16\, \left( 9-4\,\sqrt {5}
 \right) {f}^{4}}$.
 
\subsection{Hyperbola-inscribed polar quadrilaterals}

 A 4-periodic orbit $P_i=[p_i,q_i] $ is such that:
{\small
\begin{align*}
   p_1&={\frac {\sqrt { \sqrt {5}-2} \left( \Delta_1+6\,fy_1\,\sqrt {5}
\sqrt { \sqrt {5}-2}+14\,fy_1\,\sqrt { \sqrt {5}-2} \right)   y_1}{4\,y_1^{2}+2\,y_1^{2}\sqrt {5}-8\,f^{2}}}
\\ 
q_1&=
{\frac { \left( 2\,\sqrt {\sqrt {5}+2}\; \Delta_1\,f+4\,{f}^{2}y_1-{{
\it y1}}^{3} \right)  \left( 4\,{f}^{2}\sqrt {5}+8\,{f}^{2}+y_1
^{2} \right) }{32\,{f}^{4}-32\,{f}^{2}y_1^{2}-2\,y_1^{4}
}}\\
p_2&= \frac{2\,\sqrt { \sqrt {5}-2} \left( \Delta_1+ 2\sqrt{2}(  \sqrt{5}-1)  y_1 \right)  \left( 
 \left(  \sqrt {5} -2\right) y_1^{2}+4\,f^{2} \right) f^{2}}{y_1\, \left( 16\,f^{4}-16\,f^{2}y_1^{2}-y_1^{4}
 \right) 
 }
  \\
 q_2&= \sqrt {\sqrt {5}+2} \left( y_1\,
\Delta_1+2\,\sqrt {\sqrt {5}+2}fy_1^
{2}-8\, \left( \sqrt {5}-2 \right) ^{3/2}{f}^{3} \right)  \left(  (\sqrt {5}-2)y_1^{2}+4\,{f}^{2}  \right) f
\\
p_3&= \frac{-2\,\sqrt { \sqrt {5}-2} \left( \Delta_1-2\,\sqrt {2}\sqrt {\sqrt {5}-1}
fy_1\right)  \left(y_1^{2}\sqrt {5}+4\,{f}^{2}-2\,  
y_1^{2} \right) {f}^{2}
}{y_1\, \left( 16\,f^{4}-16\,f^{2}y_1^{2}-y_1^{4}
 \right)} \\
q_3&= \frac{-\sqrt {\sqrt {5}+2} \left( {\it y1}\,\Delta_1-2\,\sqrt {\sqrt {5}+2}f y_1^{2}+8\, \left(  \sqrt {5}-2 \right) ^{3/2}{f}^{3} \right) 
 \left(  (\sqrt {5}-2)y_1^2+4\,{f}^{2}  \right) f
}{y_1\, \left( 16\,f^{4}-16\,f^{2}y_1^{2}-y_1^{4}
 \right)} \\
p_4&=   \frac{ \sqrt { \sqrt {5}-2}   \left( \Delta_1-2\sqrt{2}(\sqrt{5}-1) f y_1 \right)  \left( (\sqrt {5}-2)y_1^{2}+4\,{f}^{2} \right) y_1
}{ y_1\, \left( 16\,f^{4}-16\,f^{2}y_1^{2}-y_1^{4}
 \right)}\\
q_4&=  \frac{ \left( -2\,f\Delta_1\,\sqrt { \sqrt {5}-2}+4\,{f}^{2}y_1-y_1^{3} \right)  \left( 4\,{f}^{2}\sqrt {5}+8\,{f}^{2}+y_1^{2}
 \right) 
}{y_1\, \left( 16\,f^{4}-16\,f^{2}y_1^{2}-y_1^{4}
 \right)} \\
\end{align*}
}

\section{Relation to the bicentric family}
\label{app:bic}
Referring to \cref{fig:bic345}, the bicentric family $\B$ of $N$-gons is a family of Poncelet $N$-gons inscribed in a circles $\C=(O,R_b)$, and circumscribed about another circle $\C'=(O',r_b)$. Let $d=|O-O'|$. Relations between $d,R,r_b$ are known for many ``low N'' and are listed in \cite[Poncelet's porism]{mw}. 

\begin{figure}
    \centering
    \includegraphics[trim=300 2 2 20,clip,width=\textwidth,frame]{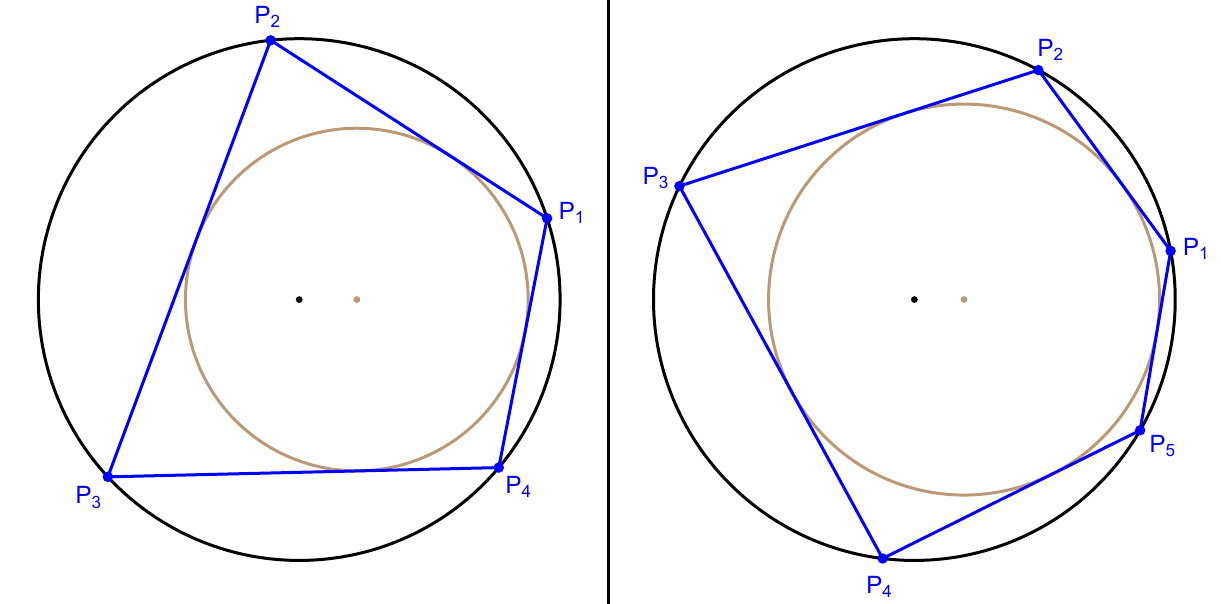}
    \caption{The bicentric family is a family of Poncelet polygons interscribed between two circles. Shown are the $N=4$ (left) and $N=5$ (right) convex cases.}
    \label{fig:bic345}
\end{figure}

\begin{definition}[Polar polygon]
Given a polygon $P$, its {\em polar polygon} $P'$ with respect to a conic $\C$ is bounded by the tangents to $\C$ at the vertices of $\P$.
\end{definition}

\begin{proposition}
The polar family $\B'$ of $\B$ with respect to $\C$ is an ellipse, parabola, or hyperbola-inscribed if $d$ is smaller, equal, or greater than $R'$, respectively ($O$ is interior, on the boundary, or exterior to $C'$, respectively). Furthermore, one of the foci coincides with $O'$.
\end{proposition}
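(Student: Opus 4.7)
The plan is to compute the polar image of the inner bicentric circle $\C'$ with respect to the outer one $\C$, identify it as a conic in focal polar form, and read off its type and focus. Since each side of a polygon in $\B$ is tangent to $\C'$, its pole with respect to $\C$ is a vertex of the corresponding polar polygon in $\B'$; hence the vertices of $\B'$ all lie on this polar image, which shows $\B'$ is inscribed in it. As $\B$ sweeps over its Poncelet family, every tangent to $\C'$ appears as a side of some polygon, so the full conic is traced out.

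Placing $O$ at the origin and $O'=(d,0)$, a tangent line to $\C'$ with outward unit normal $(\cos\theta,\sin\theta)$ has equation
\[
x\cos\theta+y\sin\theta \;=\; r_b+d\cos\theta,
\]
so its pole with respect to $\C=(O,R_b)$ is
\[
P_\theta \;=\; \frac{R_b^{2}}{r_b+d\cos\theta}\,(\cos\theta,\sin\theta).
\]
Writing $P_\theta=\rho(\theta)(\cos\theta,\sin\theta)$ in polar coordinates centred at $O$ then gives
\[
\rho(\theta)\;=\;\frac{R_b^{2}/r_b}{1+(d/r_b)\cos\theta},
\]
which is the standard focus--directrix form of a conic with focus at $O$, eccentricity $e=d/r_b$, and semi-latus rectum $L=R_b^{2}/r_b$.

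The trichotomy now follows immediately: the polar conic is an ellipse, a parabola, or a hyperbola precisely when $e<1$, $e=1$, or $e>1$, equivalently when $d<r_b$, $d=r_b$, or $d>r_b$, which is exactly the condition that $O$ lies inside, on, or outside $\C'$. In every case one focus of the conic coincides with $O$, the centre of the circle used for the polarity.

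The only subtlety worth flagging is the hyperbolic case $d>r_b$: as $\theta$ varies the denominator $r_b+d\cos\theta$ changes sign, and the corresponding pole $P_\theta$ jumps to the far branch of the hyperbola. This causes no difficulty, because the single polar equation $\rho=L/(1+e\cos\theta)$ already describes both branches simultaneously and neither the eccentricity nor the location of the focus is affected; one simply has to track on which side of each tangent line $O$ lies in order to recognise which vertex of $\B'$ belongs to which branch.
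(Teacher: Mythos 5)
Your argument is correct, and it is the standard one: reciprocating the incircle $\C'$ in the circumcircle $\C$ gives a conic in focus--directrix form with focus at the center of reciprocation and eccentricity $e=d/r_b$, and the vertices of the polar polygon, being the poles of the sides of the bicentric polygon (each tangent to $\C'$), necessarily lie on that conic. The pole computation $\rho(\theta)=\left(R_b^2/r_b\right)/\left(1+(d/r_b)\cos\theta\right)$ is right, the trichotomy $e\lessgtr 1 \Leftrightarrow d\lessgtr r_b$ is exactly the condition on whether $O$ is interior to, on, or exterior to $\C'$, and your remark about sign changes of the denominator in the hyperbolic case correctly accounts for the two branches. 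The paper states this proposition without proof, so there is no argument of theirs to compare against; yours would serve.

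One substantive discrepancy deserves attention: your computation places the focus at $O$, the center of the reciprocating circle $\C$, whereas the proposition as printed asserts that a focus coincides with $O'$, the incenter. Your conclusion is the correct one, and the printed statement appears to carry a typo ($O'$ for $O$). This is corroborated elsewhere in the paper: the parabola-inscribed family is described as circumscribing a circle centered at the parabola's focus, and that caustic circle is $\C$ itself (centered at $O$); moreover, the proof of the subsequent proposition places the vertex of $\P$ at distance $R_b^2/(2r_b)$ from $O$ and identifies this with the focal distance $f$, which puts the focus at $O$, in agreement with your $\rho(0)=L/2=R_b^2/(2r_b)$ at eccentricity $1$. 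A quick check in the elliptic case confirms that the second focus of your conic sits at $x=-2R_b^2 d/(r_b^2-d^2)$, which is not $O'=(d,0)$ in general, so the statement cannot be rescued by appealing to the other focus.
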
 

As shown in \cref{fig:bic-tang-n4}, when the polar family is hyperbola-inscribed, there are two layouts for its vertices: either (i) all lie on the branch of the hyperbola closest to the incenter of the family, or (ii) all but one lie on said branch, while the remaining one lies on the ``other'' branch. 

\begin{figure}
    \centering
    \includegraphics[trim=450 50 5 20,clip,width=.7\textwidth,frame]{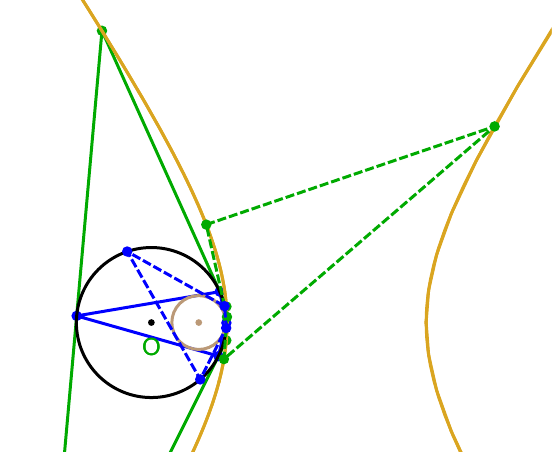}
    \caption{If the circumcenter $O$ is exterior to the incircle of a bicentric polygon (blue), the polar (i.e., tangential) family will be hyperbola (gold) inscribed. Over the family there are two configurations: (i) solid green: all vertices lie on one branch of the hyperbola; (ii) dashed green: all but one vertex lie on the branch proximal to the incenter, while a lone one lies on the opposite branch.}
    \label{fig:bic-tang-n4}
\end{figure}

\begin{proposition}
The parabola $\P$ which is the polar image of $\B$ with $d=r_b$, has focal distance $f=R_b^2/(2r_b)$.
\end{proposition}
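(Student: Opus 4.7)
The plan is to compute, in coordinates, the polar image of the incircle $\C'$ with respect to the circumcircle $\C$, and recognize that when $d=r_b$ this image is a parabola whose focal parameter can be read off directly.

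First I would place the circumcenter at the origin, so $\C\colon x^2+y^2=R_b^2$, and align the axis so that $O'=(r_b,0)$, giving $\C'\colon (x-r_b)^2+y^2=r_b^2$. Note that $O=(0,0)$ lies on $\C'$ precisely because $d=r_b$. The vertices of the polar polygon are the poles, with respect to $\C$, of the sides of the original bicentric polygon; since those sides are tangent to $\C'$, the locus of polar vertices is exactly the polar image of $\C'$ with respect to $\C$. Hence it suffices to identify this polar image as a parabola with focal distance $R_b^2/(2r_b)$.

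Next I would parametrize: a generic line $ax+by+c=0$ has pole $(u,v)=(-R_b^2a/c,\,-R_b^2b/c)$ with respect to $\C$ (assuming $c\neq 0$, which corresponds to lines not passing through $O$). The tangency condition to $\C'$ reads $(ar_b+c)^2=r_b^2(a^2+b^2)$. Substituting $a/c=-u/R_b^2$ and $b/c=-v/R_b^2$ and clearing denominators gives
\[
(R_b^2-r_bu)^2 = r_b^2(u^2+v^2),
\]
which simplifies to
\[
v^2 = -\frac{2R_b^2}{r_b}\left(u-\frac{R_b^2}{2r_b}\right).
\]
This is a parabola with axis along the $x$-axis (that is, along $OO'$), vertex at $(R_b^2/(2r_b),0)$, and of the standard form $v^2=-4f(u-u_0)$ with $4f=2R_b^2/r_b$, giving $f=R_b^2/(2r_b)$ as claimed.

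Finally, I would briefly address the borderline case $c=0$: lines through $O$ that are tangent to $\C'$ occur only at $O$ itself (the unique tangent to $\C'$ passing through $O$), and this tangent's pole is the point at infinity along the direction perpendicular to it, which is the point at infinity on the parabola's axis — consistent with a parabola rather than an ellipse or hyperbola. The main obstacle is really bookkeeping on signs and on the degenerate direction; the algebra itself is short once the pole-polar correspondence is set up. Apart from this, the fact that a focus of the image coincides with $O'$ (stated in the previous proposition) is easily verified by computing the focus of the parabola above as $(R_b^2/(2r_b)-f,0)=(0,0)\neq(r_b,0)$; hmm, this warrants re-examination — the focus of $v^2=-4f(u-u_0)$ sits at $(u_0-f,0)$, and here $u_0-f=0$, so the focus is at $O$, not $O'$. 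This simply reflects the symmetry that the polar transformation is centered at $O$, and is consistent with the earlier statement that the polar conic has one focus at the center of the polarization, i.e., at $O$ (in the parabolic case there is only one finite focus).
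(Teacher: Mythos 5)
Your computation is correct and establishes the claim. The paper's own proof is a one-line shortcut: it places $O$ at the origin, takes the single bicentric side tangent to the incircle at the point $(2r_b,0)$ diametrically opposite $O$, and computes its pole by inversion in the circumcircle, obtaining the vertex $V=(R_b^2/(2r_b),0)$; the focal distance then follows because the focus sits at $O$. You instead derive the entire polar reciprocal of $\C'$ with respect to $\C$, arriving at the explicit equation $v^2=-(2R_b^2/r_b)\bigl(u-R_b^2/(2r_b)\bigr)$, from which the vertex, the focus, and the value of $f$ are all read off at once. This buys two things the paper's terse proof does not: an independent verification that the locus really is a parabola (rather than inheriting that from the preceding proposition), and the location of the focus. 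On that last point your self-correction is right and worth keeping: the finite focus of the reciprocal conic lies at the center of polarization $O$, not at the incenter $O'$; the preceding proposition's assertion that a focus coincides with $O'$ appears to have its labels swapped, since the paper's own setup has the polar family circumscribing the \emph{focus-centered} circle $\C$, whose center is $O$. Your treatment of the degenerate tangent through $O$ (whose pole is the point at infinity on the axis) is also correct and corresponds exactly to the configuration with one polar vertex at infinity that the paper uses in its other closure arguments.
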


\begin{proof}
Let $O=(0,0)$. Consider a polygon in $\B$ with a vertical side $P_1 P_2$ tangent to the incircle at $(2r_b,0)$. The vertex $V$ of $\P$ is the pole of said side which can be obtained as the inversion of point $(2r_b,0)$ with respect to the circumcircle. This yields the result.
\end{proof}

\bibliographystyle{maa}
\bibliography{refs,refs_rgk,refs_rgk_private,refs_rgk_media}

\begin{thebibliography}{10}
\expandafter\ifx\csname urlstyle\endcsname\relax
 \providecommand{\url}[1]{doi:\discretionary{}{}{}#1}\else
 \providecommand{\url}{doi:\discretionary{}{}{}\begingroup
  \urlstyle{rm}\Url}\fi

\bibitem{akopyan2021-private}
Akopyan, A. (2021).
\newblock Private communication.
\newblock Private Communication.

\bibitem{akopyan2020-invariants}
Akopyan, A., Schwartz, R., Tabachnikov, S. (2020).
\newblock Billiards in ellipses revisited.
\newblock \emph{Eur. J. Math.}
\newblock {doi}:10.1007/s40879-020-00426-9.

\bibitem{akopyan2007-conics}
Akopyan, A.~V., Zaslavsky, A.~A. (2007).
\newblock \emph{Geometry of Conics}.
\newblock Providence, RI: Amer. Math. Soc.

\bibitem{bialy2020-invariants}
Bialy, M., Tabachnikov, S. (2020).
\newblock {Dan Reznik's} identities and more.
\newblock \emph{Eur. J. Math.}
\newblock {d}oi:10.1007/s40879-020-00428-7.

\bibitem{bos-1987}
Bos, H. J.~M., Kers, C., Raven, D.~W. (1987).
\newblock Poncelet's closure theorem.
\newblock \emph{Expo. Math.}, 5: 289--364.

\bibitem{caliz2020-area-product}
Chavez-Caliz, A. (2020).
\newblock More about areas and centers of {Poncelet} polygons.
\newblock \emph{Arnold Math J.}
\newblock {d}oi:10.1007/s40598-020-00154-8.

\bibitem{centina2016a}
Del~Centina, A. (2016).
\newblock Poncelet's porism: a long story of renewed discoveries, {I}.
\newblock \emph{Arch. Hist. Exact Sci.}, 70(1): 1--122.

\bibitem{dragovic11}
Dragovi\'{c}, V., Radnovi\'{c}, M. (2011).
\newblock \emph{{P}oncelet Porisms and Beyond: Integrable Billiards,
  Hyperelliptic Jacobians and Pencils of Quadrics}.
\newblock Frontiers in Mathematics. Basel: Springer.

\bibitem{corentin2021-circum}
Fierobe, C. (2021).
\newblock On the circumcenters of triangular orbits in elliptic billiard.
\newblock \emph{J. of Dyn. and Control Sys.}, 27: 693--705.

\bibitem{garcia2022-steiner-soddy}
Garcia, R., Gheorghe, L., Reznik, D. (2022).
\newblock Exploring the {S}teiner-{S}oddy porism.
\newblock {arXiv}:2201.0222.

\bibitem{garcia2021-impa}
Garcia, R., Reznik, D. (2021).
\newblock \emph{Euclidean Properties of Poncelet Triangles: Experimental
  Discovery and Analysis}.
\newblock Rio de Janeiro: IMPA.
\newblock {isbn:}978-65-89124-43-6.

\bibitem{garcia2020-family-ties}
Garcia, R., Reznik, D. (2021).
\newblock Family ties: Relating {P}oncelet 3-periodics by their properties.
\newblock \emph{J. Croatian Soc. for Geom. \& Gr. (KoG)}, 25(25): 3--18.

\bibitem{garcia2020-ellipses}
Garcia, R., Reznik, D., Koiller, J. (2020).
\newblock Loci of 3-periodics in an elliptic billiard: why so many ellipses?
\newblock arXiv:2001.08041.

\bibitem{garcia2020-new-properties}
Garcia, R., Reznik, D., Koiller, J. (2021).
\newblock New properties of triangular orbits in elliptic billiards.
\newblock \emph{Am. Math. Monthly}, 128(10): 898--910.

\bibitem{helman2021-power-loci}
Helman, M., Laurain, D., Garcia, R., Reznik, D. (2021).
\newblock Invariant center power and loci of {P}oncelet triangles.
\newblock \emph{J. Dyn. \& Contr. Sys.}
\newblock {doi}:10.1007/s10883-021-09580-z.

\bibitem{etc}
Kimberling, C. (2019).
\newblock Encyclopedia of triangle centers.
\newblock \url{faculty.evansville.edu/ck6/encyclopedia/ETC.html}.

\bibitem{maple2019}
Maplesoft (2019).
\newblock Maple.
\newblock A division of {W}aterloo {M}aple Inc.

\bibitem{odehnal2011-poristic}
Odehnal, B. (2011).
\newblock Poristic loci of triangle centers.
\newblock \emph{J. Geom. Graph.}, 15(1): 45--67.

\bibitem{pamfilos2004}
Pamfilos, P. (2004).
\newblock On some actions of {$D_3$} on a triangle.
\newblock \emph{Forum Geometricorum}, 4: 157--176.

\bibitem{reznik2020-ballet}
Reznik, D., Garcia, R., Koiller, J. (2020).
\newblock The ballet of triangle centers on the elliptic billiard.
\newblock \emph{Journal for Geometry and Graphics}, 24(1): 079--101.

\bibitem{olga14}
Romaskevich, O. (2014).
\newblock On the incenters of triangular orbits on elliptic billiards.
\newblock \emph{Enseign. Math.}, 60: 247--255.

\bibitem{sergei2016-com}
Schwartz, R., Tabachnikov, S. (2016).
\newblock Centers of mass of {P}oncelet polygons, 200 years after.
\newblock \emph{Math. Intelligencer}, 38(2): 29--34.

\bibitem{schwartz2016-com}
Schwartz, R., Tabachnikov, S. (2016).
\newblock Centers of mass of {P}oncelet polygons, 200 years after.
\newblock \emph{Math. Intelligencer}, 38(2): 29--34.

\bibitem{skutin2013-isogonal}
Skutin, A. (2013).
\newblock On rotation of an isogonal point.
\newblock \emph{J. Classical Geom.}, 2: 66–67.

\bibitem{mw}
Weisstein, E.~W. (2002).
\newblock \emph{{CRC concise encyclopedia of mathematics (2nd ed.)}}.
\newblock Boca Raton, FL: Chapman and Hall/CRC.

\bibitem{mathematica_v10}
Wolfram, S. (2019).
\newblock Mathematica, version 10.0.

\bibitem{zaslasvsky2021-private}
Zaslavsky, A. (2021).
\newblock Private communication.

\bibitem{zaslavksy2003}
Zaslavsky, A., Kosov, D., Muzafarov, M. (2003).
\newblock Trajectories of remarkable points of the poncelet triangle (in
  russian).
\newblock \emph{Kvant}, 2: 22–25.

\end{thebibliography}

\end{document}